\theoremstyle{thmstyle}
\newtheorem{thm}{Theorem}[section]
\newtheorem{theorem}[thm]{Theorem}
\newtheorem{lemma}[thm]{Lemma}
\newtheorem{prop}[thm]{Proposition}
\newtheorem{proposition}[thm]{Proposition}
\newtheorem{corollary}[thm]{Corollary}
\newtheorem{conjecture}{Conjecture}[section]
\newtheorem{question}{Question}[section]
\theoremstyle{exstyle}
\newtheorem{problem}{Exercise}[]
\newtheorem{example}[thm]{Example}
\theoremstyle{defstyle}
\newtheorem{defn}{Definition}[section]
\newtheorem{definition}[thm]{Definition}
\newtheorem{def-prop}[thm]{Definition-Proposition}
\newtheorem{def-lem}[thm]{Definition-Lemma}
\newtheorem{rem-convention}[thm]{Remark-Convention}
\newtheorem{def-note}[thm]{Definition-Notation}
\theoremstyle{remstyle}
\newtheorem{remark}[thm]{Remark}
\theoremstyle{remstyle}
\newcommand{\Hom}{\operatorname{Hom}}
\newcommand{\Fac}{\operatorname{Fac}}
\newcommand{\Ext}{\operatorname{Ext}}
\newcommand{\taurigid}{\operatorname{\tau-rigid}}
\DeclareMathOperator*{\rad}{rad}
\DeclareMathOperator*{\modu}{mod}
\DeclareMathOperator*{\Modu}{Mod}
\DeclareMathOperator*{\proj}{proj}
\newcommand{\Z}{\mathcal{Z}}
\DeclareMathOperator*{\tors}{tors}
\DeclareMathOperator*{\wide}{wide}
\DeclareMathOperator*{\End}{End}
\DeclareMathOperator*{\Filt}{Filt}
\DeclareMathOperator*{\brick}{brick}
\DeclareMathOperator*{\pd}{pd}
\DeclareMathOperator*{\D}{D}
\DeclareMathOperator*{\GL}{GL}
\DeclareMathOperator*{\SL}{SL}
\DeclareMathOperator*{\Aut}{Aut}
\DeclareMathOperator*{\rep}{rep}
\DeclareMathOperator*{\Irr}{Irr}
\DeclareMathOperator*{\coker}{coker}
\DeclareMathOperator*{\Mat}{Mat}
\DeclareMathOperator*{\stab}{stab}
\DeclareMathOperator*{\s-brick}{s-brick}
\DeclareMathOperator*{\simp}{simp}
\DeclareMathOperator*{\SI}{SI}
\newcommand*{\doublerightarrow}[2]{\mathrel{
  \settowidth{\@tempdima}{$\scriptstyle#1$}
  \settowidth{\@tempdimb}{$\scriptstyle#2$}
  \ifdim\@tempdimb>\@tempdima \@tempdima=\@tempdimb\fi
  \mathop{\vcenter{
    \offinterlineskip\ialign{\hbox to\dimexpr\@tempdima+1em{##}\cr
    \rightarrowfill\cr\noalign{\kern.5ex}
    \rightarrowfill\cr}}}\limits^{\!#1}_{\!#2}}}
\title{On the bricks (Schur representations) of finite dimensional algebras}
\author[Kaveh Mousavand, Charles Paquette]{Kaveh Mousavand, Charles Paquette} 
\address{Kaveh Mousavand: Representation Theory and Algebraic Combinatorics Unit, Okinawa Institute of Science and Technology (OIST), Japan}
\email{mousavand.kaveh@gmail.com}
\address{Charles Paquette: Department of Mathematics and Computer Science, Royal Military College of Canada, Kingston ON, Canada}
\email{charles.paquette.math@gmail.com}
\subjclass [2020]{16G10, 16G20, 16G60, 16D80, 16E30, 06A07, 16S90}
\keywords{brick, representation varieties, $\tau$-tilting theory, brick-Brauer-Thrall conjectures}
\begin{document}
\maketitle

\begin{abstract}
This manuscript treats the diverse applications of bricks within modern representation theory and several related domains, and reviews the recent developments and new results on bricks (a.k.a Schur representations). 
The current survey is an extended version of a mini-course by the second-named author, delivered in the research school on ``New Developments in Representation Theory of Algebras", held in November of 2024, at Okinawa Institute of Science and Technology (OIST), Japan. The review is mainly oriented towards the direction of research developed by the authors, which has evolved around the algebraic and geometric properties of bricks. More specifically, we discuss the emergence of bricks in $\tau$-tilting theory, torsion theory, geometric representation theory and invariant theory, while providing some links between those. Although we review the applications and properties of bricks from many different areas, the article is not meant to be an exhaustive survey on bricks in representation theory. In the setting of finite dimensional algebras over an algebraically closed field, this manuscript (and many of the recent works of the authors) is strongly motivated by an open conjecture originally posed by the first-named author in 2019, the so-called \emph{second brick Brauer-Thrall conjecture}. In the later sections, where the main focus is on the tame algebras and some other new notions of tameness, we prove some new results on the aforementioned conjecture.
\end{abstract}

\tableofcontents

\section*{Notations and conventions}

Throughout the entire manuscript, we let $k$ denote an algebraically closed field and, unless specified otherwise, $A$ is always assumed to be a finite dimensional basic (associative and unital) $k$-algebra. With no loss of generality, and by Gabriel's Theorem, we may assume that $A = kQ/I$ where $Q$ is a finite quiver and $I$ is an admissible ideal of $kQ$. A convenient numbering, which we will generally assume, is $Q_0 = \{1,2,\ldots,n\}$, where $Q_0$ denotes the set of vertices of $Q$.

We denote by $\Modu A$ the category of all left (unitary) $A$-modules, and let $\modu A$ denote the full subcategory of $\Modu A$ of the finite dimensional modules. 
Thanks to the Krull-Remak-Schmidt Theorem, every $M \in \modu A$ has a unique decomposition into a finite direct sum of indecomposable representations (up to isomorphism and ordering). We let $|M|$ denote the number of non-isomorphic indecomposable direct summands of $M$.

In the following, we only recall some terminology and fix the notation that we use throughout the notes, and for all the standard materials and rudiments of representation theory of associative algebras, we refer to \cite{ASS} and references therein.

\section{Preliminaries and background}

\subsection{Projectives, injectives and simples.} For each vertex $i$ of $Q$, we let $e_i$ denote the class of the trivial path at $i$, modulo the ideal $I$. We note that $\{e_1, \ldots, e_n\}$ forms a complete set of primitive orthogonal idempotents of $A$. Up to isomorphisms, the indecomposable non-isomorphic projective modules in $\modu A$ are
$$Ae_1, Ae_2, \ldots, Ae_n$$
where we will simply denote $Ae_i$ by $P_i$. Let $D = \Hom_k(-,k)$ denote the duality between $\modu A$ and $\modu(A^{\rm op})$. The indecomposable non-isomorphic projective modules in $\modu(A^{\rm op})$ are
$e_1A, e_2A, \ldots, e_nA$ and it follows that the indecomposable non-isomorphic injective modules in $\modu A$ are
$$D(e_1A), D(e_2A), \ldots, D(e_nA)$$
and we simply denote $D(e_iA)$ by $I_i$. Finally, there are exactly $n$ non-isomorphic simple $A$-modules, which are the tops of the non-isomorphic indecomposable projective modules. We let $S_i$ denote the top of $P_i$. We have $S_i = Ae_i/{\rm rad}(A)e_i$, or equivalently, $S_i$ is the representation of the bound quiver $(Q,I)$ where the vector space at vertex $i$ is $k$ while all other vector spaces (and all maps) are zero.

\subsection{Auslander-Reiten translations.} We need some basic tools from the Auslander-Reiten theory, which we briefly recall (for details, see \cite[Chapter IV]{ASS}). We denote by $\nu$ the Nakayama functor $\nu : = D\Hom(-,A)$, which is a covariant functor from $\modu A$ to $\modu A$. Given $M \in \modu A$, one takes any projective presentation 
$$P_{-1} \stackrel{f}{\to} P_0 \to M \to 0 $$
of $M$ and considers the kernel $\tau M$ of $\nu(f)$:
$$0 \to \tau M \to \nu(P_{-1}) \stackrel{\nu(f)}{\to} \nu(P_0).$$
It turns out that $\tau M$ is well-defined up to injective direct summands. In fact, $\tau$ gives rise to an equivalence
$$\tau: \underline{\modu} A \to \overline{\modu}A,$$
from the projective-stable module category $\underline{\modu} A$ to the injective-stable module category $\overline{\modu}A$. This functor is called the \emph{Auslander-Reiten translate}. We recall that $\underline{\modu}A$ is defined to be the quotient category of $\modu A$ by the ideal of morphims that factor through a projective module. Dually, $\overline{\modu}A$ is defined to be the quotient category of $\modu A$ by the ideal of morphims that factor through an injective module. The quasi-inverse $\tau^{-1}$ of $\tau$ is the \emph{inverse Auslander-Reiten translate}. 

\begin{remark}
    We note that although $\tau$ and $\tau^{-1}$, as functors, are functors between the stable module categories, we can still apply them to indecomposable objects in $\modu A$. If $M$ is indecomposable non-projective, then $\tau M$ is indecomposable non-injective. Similarly, if $M$ is indecomposable non-injective, then $\tau^{-1} M$ is indecomposable non-projective. If $M$ is projective, then we let $\tau M=0$. Dually, if $M$ is injective, then $\tau^{-1} M=0$.
\end{remark}

Some important properties of the Auslander-Reiten translates that we freely use are recalled below.

\begin{prop}
    We have functorial isomorphisms, in $X$ and in $Y$, as follows:
    $$\Ext^1(X,Y) \cong D \overline{\Hom}(Y, \tau X)$$
    $$\Ext^1(X,Y) \cong D \underline{\Hom}(\tau^{-1}Y,X)$$
    where $\overline{\Hom}$ is the Hom bifunctor within $\overline{\modu} A$, and $\underline{\Hom}$ is the Hom bifunctor within $\underline{\modu} A$.
\end{prop}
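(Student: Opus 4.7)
My plan is to establish the first isomorphism $\Ext^1(X,Y) \cong D\overline{\Hom}(Y,\tau X)$; the second follows by the dual argument applied to the opposite algebra $A^{\rm op}$, which exchanges $\tau$ with $\tau^{-1}$ and $\overline{\Hom}$ with $\underline{\Hom}$.

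Fix a \emph{minimal} projective presentation $P_1 \xrightarrow{f} P_0 \to X \to 0$. By definition of $\tau$, applying the Nakayama functor $\nu$ yields the left exact sequence
$$0 \to \tau X \to \nu P_1 \xrightarrow{\nu(f)} \nu P_0,$$
in which $\nu P_0$ and $\nu P_1$ are injective. The technical engine is the natural isomorphism
$$D\Hom_A(P,Y) \;\cong\; \Hom_A(Y,\nu P),$$
valid for every projective $P$ and every $Y \in \modu A$; it follows from the tensor--hom adjunction combined with $\Hom_A(P,Y) \cong \Hom_A(P,A) \otimes_A Y$ for finitely generated projective $P$, together with the definition $\nu P = D\Hom_A(P,A)$.

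Next I apply $\Hom(-,Y)$ to the projective presentation of $X$ and $\Hom(Y,-)$ to the left exact sequence above, and observe that the displayed isomorphism identifies one complex with the $k$-linear dual of the other. Realizing $\Ext^1(X,Y)$ as $\Hom(\Omega X, Y)/\operatorname{Im}\Hom(P_0,Y)$, where $\Omega X = \ker(P_0 \to X)$, a diagram chase produces a canonical surjection $\Hom(Y,\tau X) \twoheadrightarrow D\Ext^1(X,Y)$, or equivalently an injection $\Ext^1(X,Y) \hookrightarrow D\Hom(Y,\tau X)$.

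The main obstacle, and precisely what forces $\overline{\Hom}$ rather than $\Hom$, is the identification of the kernel of the above surjection with the subspace of $\Hom(Y,\tau X)$ consisting of morphisms that factor through an injective module. This rests on the equivalence $\nu\colon \proj A \to \operatorname{inj} A$ and the minimality of the presentation: minimality ensures $\tau X$ has no injective direct summands and that the inclusion $\tau X \hookrightarrow \nu P_1$ behaves as an injective envelope on the relevant components, so that a morphism $Y \to \tau X$ lies in the kernel exactly when it factors through some injective module. Quotienting on the right by this subspace, the exactness of $D$ yields the desired natural isomorphism $\Ext^1(X,Y) \cong D\overline{\Hom}(Y,\tau X)$, with naturality in both variables inherited from naturality of each construction used.
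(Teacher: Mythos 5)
The paper does not prove this proposition; it is recalled as standard background (the Auslander--Reiten formula), with a pointer to \cite[Chapter~IV]{ASS}. Your sketch follows essentially the standard argument found there---minimal projective presentation, the Nakayama adjunction $D\Hom_A(P,-)\cong\Hom_A(-,\nu P)$ for projective $P$, comparison of the resulting complexes, and identification of the kernel of $\Hom(Y,\tau X)\twoheadrightarrow D\Ext^1(X,Y)$ with the injectively factoring morphisms via minimality of the induced injective copresentation $0\to\tau X\to\nu P_1\to\nu P_0$---and is correct in substance, though that last identification is stated rather than fully carried out.
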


\section{Bricks, $\tau$-tilting theory, and torsion theory}

We begin this section by recalling the definition of the main objects of interest in this manuscript (i.e., bricks) and some remarks on these important modules, which are also known as Schur-representations.

\begin{defn}
    Let $M \in \modu A$. Then $M$ is a \emph{brick} provided $\End_A(M)$ is a division $k$-algebra.
\end{defn}

\begin{problem}
    If $M \in \modu A$, prove that $M$ is a brick if and only if $\End_A(M) = \{\lambda 1_M \mid \lambda \in k\}$, if and only if $\End_A(M)$ is one dimensional, as a $k$-vector space. Show that if the assumption that $k$ is algebraically closed is dropped, then for a brick $M$ in $\modu A$, the algebra $\End_A(M)$ need not be one dimensional.
\end{problem}

\begin{remark} To make some comparison with arbitrary indecomposables, we make a few remarks on some properties of bricks.
\begin{enumerate}
    \item[$(a)$] Any brick $M$ is indecomposable. Indeed, $\End_A(M)$ being a division algebra only has $0$ and $1_M$ as indempotents. This forces $M$ to be indecomposable.
    \item[$(b)$] Each simple module is a brick, but any non-local (connected) algebra has some non-simple bricks. Treated through \emph{wide} subcategories, each brick in $\modu A$ can be seen as a simple object in a wide subcategory of $\modu A$ (see Proposition \ref{Prop: Ringle-wide-semibrick}).
    \item[$(c)$] Every thin indecomposable module is a brick. Recall that $M$ is said to be \emph{thin} if $e_iM$ is at most one dimensional, for every $i \in Q_0$.
    \item[$(d)$] In general, over an arbitrary algebra, not every indecomposable is a brick. It follows from \cite{Dr} that all indecomposable $A$-modules are bricks if and only if the algebra $A$ is locally representation directed. Every such algebra is necessarily representation-finite.
    \item[$(e)$] The projective module $P_i$ is a brick if and only if $e_iAe_i$ is one dimensional, which is the case if and only if every path of positive length from $i$ to $i$ is in $I$.
\end{enumerate}
\end{remark}

Bricks are important objects that govern many aspect of torsion theory, $\tau$-tilting theory and silting theory. In this section, we briefly recall some key notions from $\tau$-tilting theory and torsion theory. We  then explain how bricks naturally arise as part of these theories. We start with the definition of $\tau$-rigidity and of (support) $\tau$-tilting modules, as formally introduced by Adachi, Iyama and Reiten \cite{AIR}.

\begin{defn}
    Let $M \in \modu A$. 
    \begin{enumerate}
        \item[$(1)$] The module $M$ is \emph{$\tau$-rigid} provided $\Hom_A(M, \tau M)=0$.
        \item[$(2)$] The module $M$ is \emph{$\tau$-tilting} provided it is $\tau$-rigid and $|M|=n$.
        \item[$(3)$] The module $M$ is \emph{support $\tau$-tilting} provided there is an idempotent $e \in A$ such that $M$ is $\tau$-tilting over $A/AeA$.
    \end{enumerate}
\end{defn}

\begin{remark}
    \begin{enumerate}
        \item[$(a)$] It follows from a result of Skowronski (see \cite[VIII, Lemma 5.3]{ASS}) that if $M$ is $\tau$-rigid, then $|M| \le n$. Hence, $\tau$-tilting modules are the $\tau$-rigid modules with the maximal possible number of non-isomorphic direct summands.
        \item[$(b)$] If $eM=0$, then $M$ is $\tau$-rigid over $A/AeA$ if and only if it is $\tau$-rigid over $A$.
        \item[$(c)$] Recall that $M$ is \emph{partial tilting} provided that $\Ext^1_A(M,M)=0$ and the projective dimension of $M$ is at most one. It is \emph{tilting} if, in addition, we have $|M|=n$. Since the projective dimension of $M$ is at most one, it is not hard to check that $\overline{\Hom}_A(M, \tau M) = \Hom_A(M, \tau M)$, as vector spaces. Hence, rigidity and $\tau$-rigidity are equivalent notions in this case. Thus, every tilting module is $\tau$-tilting. Conversely, every faithful $\tau$-rigid module is tilting. This follows from the following two facts. First,  $M$ is faithful if and only if there is an epimorphism $M^r \to D(A)$ for some $r$. Second, a module $M$ has projective dimension at most one if and only if $\Hom_A(D(A), \tau M)=0$ (see \cite[VIII, Lemma 5.1]{ASS}).  
    \end{enumerate}
\end{remark}

Now, we recall the notion of torsion pairs. In these notes, all subcategories of $\modu A$ are full, closed under finite direct sums and under direct summands and isomorphisms.

\begin{defn}
    Let $\mathcal{T}, \mathcal{F}$ be two subcategories of $\modu A$. Then 
    \begin{enumerate}
        \item[$(1)$] $\mathcal{T}$ is a torsion class if it is closed under quotients and under extensions. Equivalently, for any short exact sequence $0 \to L \to M \to N \to 0$
        in $\modu A$, we have that
        \begin{itemize}
            \item If $M \in \mathcal{T}$, then $N \in \mathcal{T}$.
            \item If $L, N \in \mathcal{T}$, then $M \in \mathcal{T}$.
        \end{itemize}
        \item[$(2)$] $\mathcal{F}$ is a torsion-free class if it is closed under submodules and under extensions. Equivalently, for any short exact sequence $0 \to L \to M \to N \to 0$ in $\modu A$, we have that
        \begin{itemize}
            \item If $M \in \mathcal{F}$, then $L \in \mathcal{F}$.
            \item If $L, N \in \mathcal{F}$, then $M \in \mathcal{F}$.
        \end{itemize}
        \item[$(3)$] The pair $(\mathcal{T}, \mathcal{F})$ is a torsion pair provided
        \begin{itemize}
            \item[$(i)$] $\mathcal{T}$ is a torsion class
            \item[$(ii)$] $\mathcal{F}$ is a torsion-free class
            \item[$(iii)$] $\Hom_A(\mathcal{T}, \mathcal{F})=0$
            \item[$(iv)$] For every $X \in \modu A$, there is a short exact sequence $0 \to tX \to X \to X/tX \to 0$, with $tX \in \mathcal{T}$ and $X/tX \in \mathcal{F}$.
        \end{itemize}
    \end{enumerate}
\end{defn}

Now, here are a few remarks.

\begin{remark}
    \begin{enumerate}
        \item[$(a)$] Conditions $(iii), (iv)$ above imply conditions $(i), (ii)$. Hence, the first two conditions in the definition of a torsion pair are not needed (but it is convenient to include them as part of the definition).
        \item[$(b)$] If $\mathcal{T}$ is a torsion class, then there is a unique torsion-free class $\mathcal{F}$ such that $(\mathcal{T}, \mathcal{F})$ is a torsion pair. We have 
        $$\mathcal{F} = \mathcal{T}^\perp:=\{X \in \modu A \mid \Hom(\mathcal{T},X)=0\}.$$
        \item[$(c)$] Dual to part $(b)$, if $\mathcal{F}$ is a torsion-free class, there is a unique torsion class $\mathcal{T}$ such that $(\mathcal{T}, \mathcal{F})$ is a torsion pair. In particular, $\mathcal{T} = ^\perp \mathcal{F}:=\{X \in \modu A \mid \Hom(X, \mathcal{F})=0\}$.
    \end{enumerate}
\end{remark}

Some subcategories of $\modu A$ are nicer in the sense that modules in $\modu A$ always admit approximations by objects in the subcategory.  To articulate this more precisely, we need the following definition.

\begin{defn}
    Let $\mathcal{C}$ be a subcategory of $\modu A$.
    \begin{enumerate}
        \item[$(1)$] The subcategory $\mathcal{C}$ is \emph{contravariantly finite} provided for every $X \in \modu A$, there exists a morphism $f: C \to X$ with $C \in \mathcal{C}$ such that $\Hom_A(\mathcal{C},f)$ is surjective. In other words, if $f': C' \to X$ is a morphism with $C' \in \mathcal{C}$, then there exists $h: C\to C'$ such that $f' = fh$:
        $$\xymatrix{C \ar[r]^f & X \\ C'\ar@{-->}[u]^{h} \ar[ur]_{f'} &}$$
        \item[$(2)$] The subcategory $\mathcal{C}$ is \emph{covariantly finite} provided for every $X \in \modu A$, there exists a morphism $f: X \to C$ with $C \in \mathcal{C}$ such that $\Hom_A(f,\mathcal{C})$ is surjective. Equivalently, if $f': X \to C'$ is a morphism with $C' \in \mathcal{C}$, then there exists $h: C'\to C$ such that $f' = hf$:
        $$\xymatrix{X \ar[r]^f \ar[dr]_{f'} & C \ar@{-->}[d]^{h} \\ & C'}$$
        \item[$(3)$] The category $\mathcal{C}$ is \emph{functorially finite} if it is both contravariantly finite and covariantly finite.
    \end{enumerate}
\end{defn}

\begin{remark}
    The morphism $f$ in (1) is called a \emph{right $\mathcal{C}$-approximation} of $X$. The morphism $f$ in (2) is called a \emph{left $\mathcal{C}$-approximation} of $X$. Hence, the subcategory $\mathcal{C}$ is functorially finite in $\modu A$ provided every $X \in \modu A$ admits a left and a right $\mathcal{C}$-approximation.
\end{remark}

The following result is a combination of some older results by Auslander and Smal\o  \cite{AS}, and some newer results by Adachi, Iyama and Reiten \cite{AIR}.

\begin{theorem}\label{Thm: funct. finite torsion pairs}
    Let $(\mathcal{T}, \mathcal{F})$ be a torsion pair. Then the following are equivalent
        \begin{enumerate}
            \item[$(1)$] $\mathcal{T}$ is functorially finite.
            \item[$(2)$] $\mathcal{T}$ is covariantly finite.
            \item[$(3)$] $\mathcal{T} = {\Fac}(M)$, for some $\tau$-rigid module $M$.
            \item[$(4)$] $\mathcal{T} = {\Fac}(T)$, for some support $\tau$-tilting module $T$.  \item[$(5)$] $\mathcal{F}$ is functorially finite.
            \item[$(6)$] $\mathcal{F}$ is contravariantly finite.
            \item[$(7)$] $\mathcal{F} = {\rm sub}(\tau T)$, for some $\tau$-rigid module $T$.
            \item[$(8)$] $\mathcal{F} = {\rm Sub}(\tau T)$, for some support $\tau$-tilting module $T$.
    \end{enumerate}
\end{theorem}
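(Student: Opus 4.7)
The plan is to split the eight equivalent conditions into three groups: the ``automatic'' one-sided finiteness of torsion and torsion-free classes, which takes care of the cheap equivalences; the Adachi--Iyama--Reiten bijection between functorially finite torsion classes and (support) $\tau$-tilting modules, which handles $(2)$--$(4)$ and dually $(6)$--$(8)$; and finally an Auslander--Reiten-type argument that links the $\mathcal{T}$-side to the $\mathcal{F}$-side.

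First I would establish $(1)\Leftrightarrow(2)$ and $(5)\Leftrightarrow(6)$ by the observation that \emph{every} torsion class is contravariantly finite in $\modu A$: for $X \in \modu A$ the inclusion $tX \hookrightarrow X$ is a right $\mathcal{T}$-approximation, since any morphism $T'\to X$ with $T'\in\mathcal{T}$ has image in $\mathcal{T}$ (as $\mathcal{T}$ is closed under quotients), hence inside the maximal torsion submodule $tX$. Dually, $X \twoheadrightarrow X/tX$ is a left $\mathcal{F}$-approximation for every $X$, giving the twin equivalence on the torsion-free side.

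Next I would prove the cluster $(2)\Leftrightarrow(3)\Leftrightarrow(4)$. The implication $(4)\Rightarrow(3)$ is immediate since support $\tau$-tilting modules are $\tau$-rigid, and $(3)\Rightarrow(4)$ is the Bongartz completion of \cite{AIR}, embedding any $\tau$-rigid $M$ into a support $\tau$-tilting $T$ with $\Fac(M)=\Fac(T)$. For $(3)\Rightarrow(2)$ one verifies that $\Fac(M)$ is an honest torsion class, where closure under extensions uses $\tau$-rigidity together with $\Ext^1(X,Y)\cong D\overline{\Hom}(Y,\tau X)$, and then constructs left $\Fac(M)$-approximations by factoring the evaluation $X\to M^{r}$ (with $r=\dim_k\Hom(X,M)$) through its image. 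The core of the cluster is $(2)\Rightarrow(3)$: assuming $\mathcal{T}$ is functorially finite, one shows $\mathcal{T}$ has enough Ext-projective objects, assembles the indecomposable ones into a single module $P(\mathcal{T})$, and verifies that $P(\mathcal{T})$ is $\tau$-rigid with $\mathcal{T}=\Fac(P(\mathcal{T}))$. The $\tau$-rigidity is again extracted from the AR formula: any nonzero stable morphism $P(\mathcal{T})\to \tau P(\mathcal{T})$ would correspond to a non-split extension in $\mathcal{T}$ with Ext-projective third term, contradicting the construction of $P(\mathcal{T})$.

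Finally I would transport these equivalences to the $\mathcal{F}$-side. If $\mathcal{T}=\Fac(T)$ with $T$ support $\tau$-tilting, then $\mathcal{F}=\mathcal{T}^\perp = T^\perp$, and the AR formula $\Ext^1(T,Y)\cong D\overline{\Hom}(Y,\tau T)$ rewrites $T^\perp$ as $\Sub(\tau T)$ (and analogously at the $\tau$-rigid level), yielding $(3)\Leftrightarrow(7)$ and $(4)\Leftrightarrow(8)$. Combined with the previous paragraph, this also closes the remaining loop $(2)\Leftrightarrow(6)$. I expect the main obstacle to be $(2)\Rightarrow(3)$: it requires both the existence and the bookkeeping (finitely many isoclasses) of indecomposable Ext-projectives in a functorially finite torsion class, and then a delicate use of AR duality to conclude $\tau$-rigidity together with the factor-closure description $\mathcal{T}=\Fac(P(\mathcal{T}))$.
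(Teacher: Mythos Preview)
The paper does not supply a proof of this theorem; it is stated as ``a combination of some older results by Auslander and Smal\o{} \cite{AS}, and some newer results by Adachi, Iyama and Reiten \cite{AIR}'' and left without argument. Your outline tracks the standard route through those two references, so on the level of strategy there is nothing to compare against.

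Two steps in your sketch deserve more care. For $(3)\Rightarrow(2)$, the canonical map $X\to M^{r}$ with $r=\dim_k\Hom_A(X,M)$ is only a left $\add(M)$-approximation, and its image lies in $\Sub(M)$, not obviously in $\Fac(M)$; neither the map nor its image is in general a left $\Fac(M)$-approximation of $X$. The Auslander--Smal\o{} argument genuinely uses the Ext-projectivity $\Ext^1(M,\Fac M)=0$ at this point (alternatively, one bypasses the issue via Smal\o{}'s theorem that $\mathcal T$ is functorially finite if and only if $\mathcal F$ is). For the transport to the torsion-free side, the Auslander--Reiten formula you invoke controls $\Ext^1(T,-)$, not $\Hom(T,-)$, so it does not directly yield $T^\perp=\Sub(\tau T)$; the identity proved in \cite{AIR} is $T^\perp=\Sub(\tau T\oplus \nu P)$ for the support $\tau$-tilting pair $(T,P)$, and the injective summand $\nu P$ cannot be dropped (for $A$ of type $A_2$ with $T$ a simple projective one has $\tau T=0$ while $T^\perp\ne 0$). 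This last point is arguably as much an imprecision in the theorem as written in the paper as it is a gap in your argument.
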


\begin{remark}
\begin{enumerate}
    \item[$(a)$] It is not hard to check that a module $T$ is $\tau$-rigid if and only if $\tau T$ is $\tau^{-1}$-rigid, where a module $Z$ is $\tau^{-1}$-rigid provided $\Hom(\tau^{-1}Z,Z)=0$.
    \item[$(b)$] Similar to the notion of (support) $\tau$-tilting module, we can define the $\tau^{-1}$ analogue, which is the notion of (support) $\tau^{-1}$-tilting module. It is also not hard to check that $T$ is support $\tau$-tilting if and only if $\tau T$ is support $\tau^{-1}$-tilting. Note that $T$ might be $\tau$-tilting without $\tau T$ being $\tau^{-1}$-tilting. This happens if and only if $T$ has a non-zero projective direct summand. In this case, however, $\tau T$ is still support $\tau^{-1}$-tilting.
    \item[$(c)$] With this, the equivalent conditions in the above theorem are also equivalent to
    \begin{enumerate}
    \item[$(9)$]  $\mathcal{F} = {\rm Sub}(Z)$, for some $\tau^{-1}$-rigid module $Z$.
        \item[$(10)$]  $\mathcal{F} = {\rm Sub}(Z)$, for some support $\tau^{-1}$-tilting module $Z$.
    \end{enumerate}
    \item[$(d)$] We note that the modules $M$ in part $(3)$, $T$ in part $(7)$, and $Z$ in part $(9)$ are not necessarily unique. In contrast, those modules in parts $(4)$, $(8)$, and $(10)$ are uniquely determined, up to isomorphism and basic form.
\end{enumerate}
    
\end{remark}

Now, let us turn our attention to bricks in the setting of torsion theory. We first mention the following.

\begin{problem}
    Show that a torion class in $\modu A$ is completely determined by the bricks it contains. In other words, show that if $\mathcal{T}_1, \mathcal{T}_2$ are two torsion classes in $\modu A$ that contain the same bricks, then $\mathcal{T}_1 = \mathcal{T}_2$. \emph{(Hint: Assume otherwise, and let $M$ be of minimal length that belongs to one torsion class and not the other. Then $M$ is not a brick. Consider the image of a non-zero non-isomorphism $f: M \to M$.) }
\end{problem}

Given a class $\mathcal{X}$ of modules in $\modu A$, we can consider $T(\mathcal{X})$ the smallest torsion class that contains $\mathcal{X}$. Alternatively, this can be expressed as the intersection of all torsion classes containing $\mathcal{X}$. Of course, we have the well known identity
$$T(\mathcal{X}) = {\Filt}({\Fac} \mathcal{X}).$$
A module $M$ lies in the latter if and only if it admits a filtration
$$0 = M_0 \subset M_1 \subset \cdots \subset M_{r-1} \subset M_r = M$$
such that for $i = 1, \ldots, r$ the module $M_i / M_{i-1}$ lies in ${\Fac} \mathcal{X}$. We recall that ${\Fac} \mathcal{X}$ consists of the modules which are quotient of a module in ${\rm add} \mathcal{X}$. If $\mathcal{X}$ is a single $\tau$-rigid module $T$, or its additive closure ${\rm add} T$, then we have the simpler expression
$$T(\mathcal{X}) = {\Fac} T.$$

The following proposition was proven by Demonet, Iyama and Jasso \cite{DIJ}. In \cite{As2}, Asai has shown an analogue and generalization of this correspondence for semibricks. Before we recall the ``brick-$\tau$-rigid" correspondence, we first introduce some notations. 

\medskip

Henceforth, we let $\brick(A)$ denote a complete set of non-isomorphic bricks in $\modu A$ and $i\taurigid(A)$ a complete set of non-isomorphic indecomposable $\tau$-rigid modules in $\modu A$. Moreover, we let $\tors(A)$ denote the set of all torsion classes in $\modu A$.

\begin{prop} \label{brick-tau-rigid corr.}
    There are injections
    $$\varphi: i\taurigid(A) \to \brick(A)$$
    and
    $$\psi: \brick(A) \to \tors(A)$$
    such that $\varphi(T) = \frac{T}{{\rm rad}_{\End(T)}T}$ and $\psi(B) = T(B)$. Moreover, $\psi\varphi(T) = {\Fac}(T)$.
\end{prop}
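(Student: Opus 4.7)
The plan is to establish the four claims in sequence: (i) $B := T/\rad_{\End(T)}T$ is a brick; (ii) $\psi\varphi(T) = \Fac(T)$; (iii) $\varphi$ is injective; (iv) $\psi$ is injective. The first three all exploit $\tau$-rigidity through the short exact sequence $0 \to JT \to T \to B \to 0$, where $E := \End_A(T)$ is a local $k$-algebra (since $T$ is indecomposable) with residue field $k$ and radical $J := \rad E$, and where $JT = \rad_{\End(T)}T = \sum_{j\in J} j(T)$.

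For (i), the module $B$ is nonzero because $J$ is nilpotent and so $JT \subsetneq T$. Every $f \in E$ decomposes as $\lambda\cdot\id_T + g$ with $\lambda \in k$ and $g \in J$, and $g(T) \subseteq JT$ by the very definition of $JT$, so $g$ descends to the zero map on $B$. It therefore suffices to lift an arbitrary $\bar f \in \End_A(B)$ to some $f \in E$ satisfying $\bar f\pi = \pi f$, where $\pi \colon T \to B$ is the canonical surjection. The submodule $JT$ itself lies in $\Fac(T)$ (it is the image of an $A$-linear map $T^m \to T$ built from a finite generating set of $J$), so $\tau$-rigidity of $T$ yields $\Ext^1_A(T, JT) = 0$ and hence a surjection $\Hom_A(T, T) \twoheadrightarrow \Hom_A(T, B)$, providing the required lift.

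For (ii), the inclusion $T(B) \subseteq \Fac(T)$ is immediate since $\Fac(T)$ is already a torsion class by $\tau$-rigidity and contains $B$. Conversely, I would show $T \in \Filt(\Fac B) = T(B)$ using the finite $J$-adic filtration $T \supset JT \supset J^2T \supset \cdots \supset J^sT = 0$. For each $i$, lifting a $k$-basis $\bar h_1, \ldots, \bar h_N$ of $J^i/J^{i+1}$ to $h_l \in J^i \subseteq E$ produces a well-defined $A$-linear surjection $B^N \twoheadrightarrow J^iT/J^{i+1}T$ via $(\bar t_l) \mapsto \sum_l h_l(t_l) + J^{i+1}T$ (well-definedness: any $t_l \in JT$ is sent into $J^{i+1}T$), so every factor lies in $\Fac B$. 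Injectivity of $\varphi$ in (iii) then follows at once: given $\varphi(T_1) = \varphi(T_2) = B$, (ii) gives $\Fac(T_1) = \Fac(T_2)$, and the same Ext-vanishing lifts each canonical surjection $\pi_i \colon T_i \to B$ across the other, producing $f \colon T_2 \to T_1$ and $g \colon T_1 \to T_2$ with $\pi_1 fg = \pi_1$. Then $fg - \id_{T_1}$ has image in $J_1T_1 \subsetneq T_1$ and therefore lies in $\rad E_1$, where $E_1 := \End_A(T_1)$; hence $fg \in \id_{T_1} + \rad E_1$ is a unit in the local ring $E_1$, making $g$ split mono, and indecomposability of $T_2$ forces $T_1 \cong T_2$.

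The main obstacle is (iv), because $\tau$-rigidity is not available for arbitrary bricks. Suppose $T(B_1) = T(B_2) = \mathcal{T}$. The mutual filtrations $B_i \in \Filt(\Fac B_{3-i})$ furnish nonzero homomorphisms $\alpha \colon B_2 \to B_1$ and $\beta \colon B_1 \to B_2$, each factoring through the bottom of its filtration. If either composition $\alpha\beta$ or $\beta\alpha$ is nonzero, it is a unit in $\End(B_i) = k$, and the usual split-mono argument gives $B_1 \cong B_2$. The delicate remaining case, where both compositions vanish, I would settle by invoking the lattice-theoretic fact that $T(B)$ is a completely join-irreducible element of $\tors(A)$ with unique lower cover $T(B)^-$ not containing $B$ (established via brick labeling of cover relations, in the spirit of Asai, Barnard and related authors), so that $B$ is uniquely recoverable from $T(B)$ as the unique brick living in $T(B) \setminus T(B)^-$.
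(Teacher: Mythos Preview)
The paper does not prove this proposition; it simply attributes it to Demonet--Iyama--Jasso \cite{DIJ} (with the semibrick generalisation to Asai \cite{As2}) and moves on. So there is no in-paper argument to compare against, and your task is really to give a self-contained proof.

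Your treatment of (i)--(iii) is correct and efficient. The key input, that $\tau$-rigidity of $T$ forces $\Ext^1_A(T,\Fac T)=0$ (Auslander--Smal\o), is exactly what drives the lifting in (i) and (iii), and the $J$-adic filtration argument for (ii) is the standard one. In (iii) your conclusion that $fg-\mathrm{id}_{T_1}$ lies in $\rad E_1$ is justified not by ``image contained in $J_1T_1$'' per se, but because any endomorphism with proper image is a non-unit in the local ring $E_1$; you say this, just be sure to phrase it that way.

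For (iv) your direct attempt with a single $\alpha$ and $\beta$ is indeed too weak (as you recognise), and your fallback to complete join-irreducibility of $T(B)$ is valid but imports the brick-labelling machinery of \cite{BCZ, DI+}. A more self-contained route, closer in spirit to \cite{DIJ}, is available: first show $B_1\in\Fac B_2$ (not merely $\Filt\Fac B_2$). Indeed, if a minimal filtration of $B_1$ by objects of $\Fac B_2$ had length $\ge 2$, its bottom term $M_1$ would be a proper nonzero submodule lying in $T(B_1)=\Filt\Fac B_1$, hence would receive a nonzero map from $B_1$, contradicting that $B_1$ is a brick. With $B_1\in\Fac B_2$ and $B_2\in\Fac B_1$ in hand, compose the two surjections $B_1^{n}\twoheadrightarrow B_2$ and $B_2^{m}\twoheadrightarrow B_1$ to get $B_1^{nm}\twoheadrightarrow B_1$; since $\End(B_1)=k$, some single component $B_1\to B_2\to B_1$ is an isomorphism, and indecomposability finishes the job. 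This avoids the lattice-theoretic detour entirely.
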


\begin{remark}
\begin{enumerate}

    \item[$(a)$] In the above formula, we remark that
    $${\rm rad}_{\End(T)}T = \sum_{f: T \to T, \text{non-iso}}{\rm Im}(f).$$
    
    \item[$(b)$] Observe that if $T$ is indecomposable $\tau$-rigid, then $\varphi(T)=T$ if and only if $T$ is a brick.
\end{enumerate}
   
\end{remark}

\begin{definition} The algebra $A$ is called
\begin{enumerate}
    \item[$(1)$] \emph{brick-finite} if the set $|\brick(A)|$ is finite.
    \item[$(2)$]  \emph{$\tau$-tilting finite} if the set $|i\taurigid(A)|$ is finite, or equivalently, if there are only finitely many support $\tau$-tilting modules, up to ismorphisms.
\end{enumerate}
     
\end{definition}
Now, we mention the following important result by Demonet, Iyama and Jasso which extends some earlier results on $\tau$-tilting finite algebras (for details, see \cite{DIJ}).

\begin{prop}\label{Prop: tau-tilting-finiteness}
The following conditions are equivalent.
\begin{enumerate}
    \item[$(1)$] $A$ is brick-finite
    \item[$(2)$] $A$ is $\tau$-tilting finite
    \item[$(3)$] $\modu(A)$ admits only finitely many torsion classes
    \item[$(4)$] all torsion classes of $\modu(A)$ are functorially finite. 
\end{enumerate}
\end{prop}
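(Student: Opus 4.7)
My plan is to establish the four equivalences via a cycle $(1)\Rightarrow(2)\Rightarrow(4)\Rightarrow(3)\Rightarrow(1)$, leveraging the two injections $\varphi$ and $\psi$ of Proposition \ref{brick-tau-rigid corr.}, the exercise immediately preceding it (each torsion class is determined by the bricks it contains), and the characterization of functorially finite torsion classes in Theorem \ref{Thm: funct. finite torsion pairs}. Three of the implications are essentially free: $\varphi$ yields $(1)\Rightarrow(2)$, $\psi$ yields $(3)\Rightarrow(1)$, and the exercise furnishes an injection $\tors(A)\hookrightarrow 2^{\brick(A)}$, giving a direct $(1)\Rightarrow(3)$ as a bonus. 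This already settles $(1)\Leftrightarrow(3)$ and one direction of $(1)\Leftrightarrow(2)$, so the remaining work is to weave $(4)$ into the cycle via $(2)\Rightarrow(4)\Rightarrow(3)$.

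For $(2)\Rightarrow(4)$, I would fix an arbitrary torsion class $\mathcal{T}$ and consider the (finite, by assumption) set of isomorphism classes of $\tau$-rigid modules $M$ with $\Fac(M)\subseteq\mathcal{T}$. Choosing such an $M$ with $|M|$ maximal, the goal is to show $\Fac(M)=\mathcal{T}$, from which Theorem \ref{Thm: funct. finite torsion pairs} immediately gives $(4)$. Assuming for contradiction that the containment is strict, the exercise produces a brick $B\in\mathcal{T}\setminus\Fac(M)$. A Bongartz-style completion, carried out via $\tau$-tilting reduction inside the subcategory $\Fac(M)^{\perp}\cap\mathcal{T}$ (which inherits a $\tau$-tilting finiteness property from $A$), would then produce an indecomposable $\tau$-rigid module $N\notin\add M$ such that $M\oplus N$ is $\tau$-rigid with $\Fac(M\oplus N)\subseteq\mathcal{T}$, contradicting the maximality of $|M|$.

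For $(4)\Rightarrow(3)$, I would argue by contradiction. If $|\tors(A)|=\infty$, then using that $\tors(A)$ is a complete lattice under inclusion I would extract a strictly ascending chain $\mathcal{T}_1\subsetneq\mathcal{T}_2\subsetneq\cdots$, whose union $\mathcal{T}_\infty:=\bigcup_i\mathcal{T}_i$ is again a torsion class. Applying $(4)$ to $\mathcal{T}_\infty$, the regular module ${}_AA$ admits a left $\mathcal{T}_\infty$-approximation $f\colon A\to C$. Since $C$ is finite-dimensional, $C\in\mathcal{T}_N$ for some $N$. The universal property of $f$ forces every $D\in\mathcal{T}_\infty$ to be a quotient of a direct sum of copies of $C$: for each $d\in D$ the map $g_d\colon A\to D$, $a\mapsto ad$, factors as $g_d=h_d\circ f$, so $d=h_d(f(1))$, and summing over $d$ gives a surjection $\bigoplus_{d\in D} C\twoheadrightarrow D$. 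Thus $\mathcal{T}_\infty\subseteq\Fac(C)\subseteq\mathcal{T}_N$, contradicting the strict chain.

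The main obstacle I anticipate is $(2)\Rightarrow(4)$: the $\tau$-tilting-reduction step inside a torsion class that is not yet known to be functorially finite requires genuine care, since one must enlarge $M$ while staying inside $\mathcal{T}$. The other implications are either direct formal consequences of Proposition \ref{brick-tau-rigid corr.} and the exercise, or follow cleanly once one hits on the approximation-of-${}_AA$ trick in $(4)\Rightarrow(3)$.
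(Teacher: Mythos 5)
Your cycle cleanly dispatches the easy implications: $\varphi$ from Proposition \ref{brick-tau-rigid corr.} gives $(1)\Rightarrow(2)$, $\psi$ gives $(3)\Rightarrow(1)$, and the exercise that a torsion class is determined by its bricks gives an injection $\tors(A)\hookrightarrow 2^{\brick(A)}$ and hence $(1)\Rightarrow(3)$. These match the one argument the paper actually supplies for this statement, namely the remark immediately after it; for the hard directions the paper simply cites \cite{DIJ}. So the question is whether your sketches of $(2)\Rightarrow(4)$ and $(4)\Rightarrow(3)$ close the loop, and both have genuine gaps.

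In $(4)\Rightarrow(3)$, the approximation step is correct and pretty: a left $\mathcal{T}_\infty$-approximation $f\colon A\to C$ with $C\in\mathcal{T}_N$ forces $\mathcal{T}_\infty\subseteq\Fac(C)\subseteq\mathcal{T}_N$, since a finite generating set of any $D\in\mathcal{T}_\infty$ yields a surjection from a finite power of $C$. But this argument only proves that $\tors(A)$ satisfies the ascending chain condition, not that it is finite. Your opening move, ``extract a strictly ascending chain from $|\tors(A)|=\infty$,'' is not available in general: an infinite complete lattice need not contain any infinite chain (take a top, a bottom, and an infinite antichain in between). Even running the dual argument on right $\mathcal{F}$-approximations of $DA$ to get the descending chain condition, a lattice with ACC and DCC can still be infinite. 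Closing this requires a further structural input specific to $\tors(A)$; in \cite{DIJ} this role is played by the lattice/Hasse-quiver analysis of functorially finite torsion classes, which bounds the local degree and permits a K\H{o}nig-type argument.

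In $(2)\Rightarrow(4)$ you correctly flag the Bongartz-completion step as the crux, but the sketch glosses over the real obstacle. Given $\Fac(M)\subsetneq\mathcal{T}$ and a brick $B\in\mathcal{T}\setminus\Fac(M)$, to run a $\tau$-tilting reduction you need the new indecomposable $N$ to live in the $\tau$-perpendicular subcategory $M^\perp\cap{}^\perp(\tau M)$ (with the appropriate projective constraint) \emph{and} you need $\Fac(M\oplus N)\subseteq\mathcal{T}$. A random $B\in\mathcal{T}\setminus\Fac(M)$ gives you neither automatically, and one must instead work with, e.g., the minimal co-extending brick of $\Fac(M)$ and the non-obvious fact (a theorem in \cite{DIJ}) that the join of two functorially finite torsion classes is again functorially finite. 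Without something of that strength your maximality argument does not terminate as desired. So the skeleton of your proof is the right shape, and the three formal implications are fine, but both hard directions currently rest on unproved statements that are precisely what the paper delegates to \cite{DIJ}.
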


Note that Prop. \ref{brick-tau-rigid corr.}, together with the equivalence of (2) and (3), implies (1) and (2) are equivalent. 

\subsection{Brick labeling}
We end this section with some comments on the notion of brick labeling. The notion of brick labeling was first studied by Barnard, Carroll and Zhu in \cite{BCZ}. The idea is that any cover relation $\mathcal{U} \subset \mathcal{T}$ in the poset $\tors A$ (ordered by inclusion) can be associated with a unique element of $\brick (A)$. Their result was later developed by some other authors, including in \cite{DI+}. We start with the following fundamental result

\begin{prop}[\cite{DI+}]
    Let $\mathcal{U} \subset \mathcal{T}$ be a cover relation in the poset $\tors A$. Then there exists a unique bricks $B$ in $\mathcal{U}^\perp \cap \mathcal{T}$ (hence, $B$ is a unique brick that is torsion w.r.t the largest torsion class, and $B$ is torsion free w.r.t. the smallest torsion class).
\end{prop}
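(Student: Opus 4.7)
The plan is to extract the brick from $\mathcal{U}^\perp \cap \mathcal{T}$ via a minimum-length argument, and then to force uniqueness by combining the cover hypothesis with the brick/indecomposability property.

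First, I would check that $\mathcal{U}^\perp \cap \mathcal{T}$ is non-empty: any $X \in \mathcal{T} \setminus \mathcal{U}$ (which exists since $\mathcal{U} \subsetneq \mathcal{T}$) has a canonical torsion-free quotient $X/tX$ with respect to the torsion pair $(\mathcal{U}, \mathcal{U}^\perp)$, which lies in $\mathcal{U}^\perp$ by construction and in $\mathcal{T}$ as a quotient of $X$, and is non-zero (otherwise $X = tX \in \mathcal{U}$). I then take $B \in \mathcal{U}^\perp \cap \mathcal{T}$ of minimum length and claim it is a brick. Indeed, $B$ is indecomposable, since any proper non-zero summand would be both a submodule (hence in $\mathcal{U}^\perp$) and a quotient (hence in $\mathcal{T}$), of strictly smaller length. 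For any non-zero $f \in \End_A(B)$, the image $\Image(f)$ is a submodule of $B$ (so in $\mathcal{U}^\perp$) and a quotient of $B$ (so in $\mathcal{T}$); minimality forces $\Image(f) = B$, and thus $f$ is surjective and hence bijective. Since $k$ is algebraically closed, $f$ admits an eigenvalue $\lambda \in k$, and $f - \lambda \cdot 1_B$, being non-invertible, must vanish. This gives $\End_A(B) = k$, so $B$ is a brick.

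For uniqueness, let $B'$ be any brick in $\mathcal{U}^\perp \cap \mathcal{T}$; I will show $B \cong B'$. The key subsidiary step is to observe that $\mathcal{T} \cap {}^\perp B'$ is a torsion class satisfying $\mathcal{U} \subseteq \mathcal{T} \cap {}^\perp B' \subsetneq \mathcal{T}$: it is the intersection of two torsion classes (as ${}^\perp B'$ is closed under quotients and extensions), it contains $\mathcal{U}$ because $B' \in \mathcal{U}^\perp$, and it is a proper subcategory of $\mathcal{T}$ because $B' \in \mathcal{T}$ while $\End_A(B') \ne 0$ means $B' \notin {}^\perp B'$. The cover hypothesis then forces $\mathcal{T} \cap {}^\perp B' = \mathcal{U}$; since $B \in \mathcal{T}$ and $B \notin \mathcal{U}$ (as $\mathcal{U} \cap \mathcal{U}^\perp = 0$ and $B \ne 0$), I deduce $\Hom_A(B, B') \ne 0$, and symmetrically $\Hom_A(B', B) \ne 0$. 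Fix non-zero $\varphi \colon B \to B'$ and $\psi \colon B' \to B$. Minimality of $B$ forces $\Image(\psi) = B$, so $\psi$ is surjective, and $\Image(\varphi) \cong B$, so $\varphi$ is injective.

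The main obstacle is the ensuing case analysis on the scalars $\psi\varphi \in \End_A(B) = k$ and $\varphi\psi \in \End_A(B') = k$. If $\psi\varphi \ne 0$, it is a non-zero scalar multiple of $1_B$, hence an isomorphism; then a scalar multiple of $\varphi$ is a section of $\psi$, giving $B' = \Image(\varphi) \oplus \ker(\psi)$, and the indecomposability of the brick $B'$ forces $\ker(\psi) = 0$, so $\psi$ is an isomorphism and $B \cong B'$. If instead $\psi\varphi = 0$, I examine $\varphi\psi$: if it is non-zero, then it is an isomorphism forcing $\psi$ to be injective as well as surjective, hence an isomorphism; and if $\varphi\psi = 0$ too, then $B = \Image(\psi) \subseteq \ker(\varphi)$ forces $\varphi = 0$, contradicting the choice of $\varphi$. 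In all cases $B \cong B'$. The genuine difficulty is not this concluding case analysis but the subsidiary identity $\mathcal{T} \cap {}^\perp B' = \mathcal{U}$: this is the critical use of the cover hypothesis, and without it the existence of the non-zero morphisms $\varphi, \psi$ fails and the entire argument unwinds.
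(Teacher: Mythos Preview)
The paper does not supply its own proof of this proposition; it is merely stated with a citation to \cite{DI+}. Your argument is correct and is essentially the standard one from that reference: existence via a minimal-length object of $\mathcal{U}^\perp \cap \mathcal{T}$ (using that this category is closed under images of maps between its objects), and uniqueness via the identity $\mathcal{T} \cap {}^\perp B' = \mathcal{U}$, which is precisely where the cover hypothesis enters. The concluding case analysis is a little heavier than necessary---for instance, once $\varphi$ is injective and $\psi$ is surjective, the case $\psi\varphi = 0$ already gives $\Image(\varphi) \subseteq \ker(\psi)$, so $\varphi\psi$ factors through $\ker(\psi) \hookrightarrow B'$ and is therefore not surjective, hence $\varphi\psi = 0$ in $\End_A(B') = k$, collapsing your last two subcases into one---but nothing is wrong with what you wrote.
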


The brick in the above proposition is called the \emph{labeling brick} of the cover relation (equivalently, of the edge of the Hasse graph of $\tors A$). In \cite{BCZ}, given $\mathcal{T} \in \tors A$, the authors were able to homologically characterize all of the labeling bricks corresponding to cover relations $\mathcal{T} \subset \mathcal{U}$ for some $\mathcal{U}$. These are the \emph{minimal extending bricks} for $\mathcal{T}$. Similarly, the set of all labeling bricks corresponding to cover relations $\mathcal{U} \subset \mathcal{T}$, for some $\mathcal{U}$, are called the \emph{minimal co-extending bricks} for $\mathcal{T}$. Of note is the following result.

\begin{prop}\label{Prop: func-finite cover relations}
    Let $\mathcal{U} \subset \mathcal{T}$ be a cover relation in $\tors A$. Then $\mathcal{U}$ is functorially finite if and only if $\mathcal{T}$ is functorially finite.
\end{prop}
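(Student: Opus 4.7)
The plan is to reduce the statement to the mutation theorem for support $\tau$-tilting modules of Adachi--Iyama--Reiten, combined with the brick-labeling introduced above. Assume first that $\mathcal{T}$ is functorially finite. By Theorem \ref{Thm: funct. finite torsion pairs}, we may write $\mathcal{T} = \Fac T$ for a basic support $\tau$-tilting module $T = T_1 \oplus \cdots \oplus T_r$. The mutation theorem of \cite{AIR} produces, for each indecomposable summand $T_i$, a left mutation of $T$ at $T_i$, again a basic support $\tau$-tilting module $T'$, such that $\Fac T' \subsetneq \Fac T$ is a cover relation in $\tors A$. The essential additional ingredient I would invoke is that \emph{every} cover $\mathcal{U} \subsetneq \mathcal{T}$ in the full poset $\tors A$, and not merely those lying inside the subposet of functorially finite torsion classes, is realized by some such left mutation of $T$. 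Granting this, $\mathcal{U} = \Fac T'$ for some mutation $T'$ of $T$, and Theorem \ref{Thm: funct. finite torsion pairs} implies $\mathcal{U}$ is functorially finite.

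The converse is dual: if $\mathcal{U} = \Fac U$ for a basic support $\tau$-tilting module $U$, the right mutations of $U$ at its indecomposable summands account for all covers of $\mathcal{U}$ from above in $\tors A$, so $\mathcal{T}$ coincides with $\Fac U'$ for one of these right mutations $U'$ and is therefore functorially finite.

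The main obstacle is justifying the essential ingredient invoked above: that every cover of a functorially finite torsion class in $\tors A$ is realized by a mutation. To establish it, I would exploit the labeling brick from the preceding proposition. Given the cover $\mathcal{U} \subsetneq \mathcal{T}$ with unique labeling brick $B \in \mathcal{U}^\perp \cap \mathcal{T}$, each of the $r$ left mutations of $T$ produces a cover of $\mathcal{T}$ whose own labeling brick has the form $\varphi(S)$, where $S$ is the new indecomposable $\tau$-rigid summand introduced by the mutation and $\varphi$ is the injection of Proposition \ref{brick-tau-rigid corr.}. The uniqueness of $B$ as the labeling brick of $\mathcal{U} \subsetneq \mathcal{T}$ then forces $B = \varphi(S)$ for exactly one such mutation $T \rightsquigarrow T'$, whence $\mathcal{U}$ coincides with the corresponding $\Fac T'$ and is in particular functorially finite.
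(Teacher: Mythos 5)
There is a genuine gap. Your overall strategy — reduce to the AIR mutation theory and use the fact that every cover of a functorially finite torsion class in $\tors A$ is realized by a mutation of the corresponding support $\tau$-tilting module — is the right one, but the argument you offer for that key fact does not work. The uniqueness statement of the labeling proposition applies to a \emph{fixed} cover relation: it says that for the given cover $\mathcal{U}\lessdot\mathcal{T}$, there is exactly one brick in $\mathcal{U}^\perp\cap\mathcal{T}$. It says nothing about whether that brick has to coincide with the labeling brick of one of the $r$ mutation covers $\Fac\mu_i(T)\lessdot\mathcal{T}$. A priori, a non-functorially-finite cover $\mathcal{U}\lessdot\mathcal{T}$ could carry a labeling brick $B$ distinct from all the bricks associated to left mutations of $T$; the uniqueness principle, as quoted, provides no contradiction. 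In short, ``each cover has a unique label'' does not imply ``the set of labels of covers of $\mathcal{T}$ is exhausted by the labels of the mutation covers'' — the latter is precisely what has to be proved and is essentially equivalent to the proposition itself. To close the gap one needs an independent input: the standard route (cf.\ \cite{DIJ} and the lattice-theoretic results of \cite{DI+}) shows, via a Bongartz-type co-completion/$\tau$-tilting reduction, that for any torsion class $\mathcal{V}\subsetneq\mathcal{T}=\Fac T$ there is a unique maximal torsion class $\mathcal{W}$ with $\mathcal{V}\subseteq\mathcal{W}\lessdot\mathcal{T}$, and that this $\mathcal{W}$ is of the form $\Fac\mu_i(T)$; applying this with $\mathcal{V}=\mathcal{U}$ and using the cover relation forces $\mathcal{U}=\mathcal{W}$, hence functorially finite.

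A secondary, smaller inaccuracy: by the formula recalled at the end of the brick-labeling subsection, the labeling brick of the cover $\Fac U\lessdot\Fac T$, where $T=Z\oplus T'$ and $U=Z\oplus U'$, is $T'/\sum_{f:Z\to T'}\Ima f$, a quotient of the summand $T'$ being \emph{removed} from $T$ — not $\varphi(S)$ for the new indecomposable summand $S=U'$ introduced by the mutation, and in general not of the form $\varphi(-)$ at all, since the quotient is by images of maps from $Z$ rather than by the radical of the endomorphism ring.
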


\begin{remark}
    The above proposition tells us that the given torsion classes of a connected component of the Hasse graph of $\tors A$ are either all functorially finite, or all non-functorially finite.
\end{remark}

Before we finish this section, we sketch how the labeling brick corresponding to a functorially finite cover relation $\mathcal{U} \subset \mathcal{T}$ can be computed.  Due to the preceding remark and Propositions \ref{Prop: tau-tilting-finiteness} and \ref{Prop: func-finite cover relations}, this gives all the labeling bricks of $\tors A$, for any $\tau$-tilting finite algebra $A$.
Since $\mathcal{U}$ is functorially finite, then $\mathcal{U} = \Fac U$ and $\mathcal{T} = \Fac T$ for some (basic) support $\tau$-tilting modules $U$ and $T$. Moreover, we have
$$U = Z \oplus U', \textit{and  \,} T = Z \oplus T'$$
where $T'$ is indecomposable and $U'$ is indecomposable or zero (in fact, $T$ is a right mutation of $U$, and $U=0$ when the support of $T$ is strictly larger than that of $U$). Now, the labeling brick $B$ is
$$B = \frac{T'}{\sum_{f: Z \to T'}{\rm Im f}}.$$

\vskip 0.5cm

\section{Geometry of bricks and an open conjecture}\label{Section: Geometry of bricks}

In this section, we first introduce some of the standard algebraic and geometric tools and known results that we freely use in our studies of bricks, before stating some recent results on the bricks and the \emph{second brick-Brauer-Thrall (2nd bBT) conjecture}.
Then, in the second part, we present a brief summary of the history and developments on this open conjecture, as well as some related results and important consequences of this conjecture. For the most part, we only provide references.

\subsection{Bricks and their orbits}

For an algebra $A=kQ/I$ of rank $n$, and a fixed dimension vector $\mathbf{d}:=(d_i)_{i=1}^n \in \mathbb{Z}^{Q_0}_{\geq 0}$, by $\rep(A,\mathbf{d})$ we denote the affine variety parametrizing the representations of $A$ (or of the bound quiver $(Q,I)$) of dimension vector $\mathbf{d}$. In particular, we have $$\rep(Q,\mathbf{d}):=\{(M_\alpha)_{\alpha \in Q_1} \,|\, M_{\alpha} \in {\rm {Mat}_{d_j\times d_i} (k)} \textit{, for } \alpha: i \rightarrow j \}.$$ 
Note that $\rep(A,\mathbf{d})$ is a closed subset of $\rep(Q,\mathbf{d})$ consisting of those representations of $Q$ satisfying the relations from $I$.

\begin{example}
Let $Q$ be the following quiver 
\[\xymatrix{1 \ar@/^/[r]^{\beta}& 2 \ar@/^/[l]^{\alpha} \ar@/^/[r]^{\gamma}& 3 \ar@/^/^{\delta}[l] }\]

and $I=\langle \beta\alpha-\delta\gamma, \alpha\beta, \gamma\delta\rangle$, and $\mathbf{d}=(2,2,2)$. 
Then, 
$$\rep(A,\mathbf{d}):=\{(M_\alpha, M_\beta, M_\gamma, M_\delta) \,|\, M_\alpha, M_\beta, M_\gamma, M_\delta \in {\rm Mat}_{2 \times 2}(k), M_\beta M_\alpha = M_\delta M_\gamma, M_\alpha M_\beta = M_\gamma M_\delta=0 \}.$$

\end{example}

\begin{remark}
If $Q$ is connected, then $\rep(A,\mathbf{d})$ is a connected variety. However, in general, $\rep(A,\mathbf{d})$ is not necessarily irreducible. Nonetheless, $\rep(A,\mathbf{d})$ always has only finitely many irreducible components.
\end{remark}

For a fixed dimension vector $\mathbf{d}$, the general linear group $\GL(\mathbf{d}):=\GL(d_1)\times \cdots \times\GL(d_n)$ acts on $\rep(A,\mathbf{d})$ via conjugation. More precisely, for $g=(g_i)_{i \in Q_0}$ in $\GL(\mathbf{d})$ and $M=(M_\alpha)_{\alpha \in Q_1}$ belonging to $\rep(A,\mathbf{d})$, we have $g M= (g_j M_{\alpha} g_i^{-1})_{\alpha: i \to j}$, where $\alpha$ runs through all arrows in $Q_1$. Observe that $\GL(\mathbf{d})$ acts on each one of the irreducible components of the variety $\rep(A,\mathbf{d})$.
For $M \in \rep(A,\mathbf{d})$, by $\mathcal{O}_M$ we denote the $\GL(\mathbf{d})$-orbit of $M$, and let $\overline{\mathcal{O}}_M$ denote the closure of $\mathcal{O}_M$, with respect to the Zariski topology. We remark that $\mathcal{O}_M$ is the isoclass of $M$ in $\rep(A,\mathbf{d})$, that is, $N \in \rep(A, \mathbf{d})$ lies in $\mathcal{O}_M$ if and only if $N$ is isomorphic to $M$.

\begin{remark}
For $M \in \rep(A,\mathbf{d})$, the orbit-stablizer theorem implies $$\dim \mathcal{O}_M= \dim \GL(\mathbf{d})-\dim \stab(M)$$
where $\stab(M)$ is thought of as an algebraic variety (in fact, it is an algebraic subgroup of $\GL(\mathbf{d})$). Since $\stab(M)$ can be identified with $\Aut(M)$, which we can think of as an open subset of the (irreducible) affine space $\End_A(M)$, we get that the dimension of the variety $\stab(M)$ coincides with the vector space dimension of $\End_A(M)$. Thus, we obtain $$\dim \mathcal{O}_M= \dim \GL(\mathbf{d})-\dim \mathrm{End}_A(M).$$ 
This particularly implies that the orbits of bricks always have the maximal dimension in the irreducible components of $\rep(A,\mathbf{d})$ that they belong to.
\end{remark}

\begin{prop}\label{Prop: orbit closure of bricks}
Let $\mathcal{Z}$ be an irreducible component of $\rep(A,\mathbf{d})$. If $\mathcal{Z}$ contains a brick, then exactly one of the following cases holds:
\begin{enumerate}
    \item $\mathcal{Z}=\overline{\mathcal{O}}_B$, for a brick $B$;
    \item $\mathcal{Z}=\overline{\mathcal{U}}$, where $\mathcal{U}$ is an infinite union of orbits of bricks.
\end{enumerate}
\end{prop}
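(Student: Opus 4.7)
The plan is to exploit two well-known facts: that the brick locus in $\rep(A,\mathbf{d})$ is open, and that the orbit of any brick attains the maximal possible dimension $\dim\GL(\mathbf{d})-1$ inside any irreducible component to which it belongs, as highlighted in the remark preceding the proposition.

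First, I would let $\mathcal{U}$ denote the set of points $M \in \mathcal{Z}$ with $\dim\End_A(M)=1$, i.e.\ the bricks in $\mathcal{Z}$. By upper semi-continuity of $M \mapsto \dim\End_A(M)$ on $\rep(A,\mathbf{d})$, the set $\mathcal{U}$ is open in $\mathcal{Z}$. By hypothesis $\mathcal{U}$ is non-empty, and since $\mathcal{Z}$ is irreducible, $\mathcal{U}$ is dense in $\mathcal{Z}$. Because the brick condition is an isomorphism invariant, $\mathcal{U}$ is stable under the $\GL(\mathbf{d})$-action, so $\mathcal{U}$ is a union of orbits, each of which is the orbit of a brick and thus has dimension $\dim\GL(\mathbf{d})-1 = \dim\mathcal{Z}$ (the last equality because $\mathcal{U}$ is dense in $\mathcal{Z}$).

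Next, I would split into cases according to the number of orbits in $\mathcal{U}$. If $\mathcal{U}$ consists of a single orbit $\mathcal{O}_B$, then density immediately yields $\mathcal{Z}=\overline{\mathcal{O}}_B$, which is case (1). If $\mathcal{U}$ contains at least two orbits, I would argue by contradiction that it cannot contain only finitely many. Suppose $\mathcal{U}=\mathcal{O}_{B_1}\cup\cdots\cup\mathcal{O}_{B_k}$ with $2 \le k < \infty$. Taking closures and using that $\mathcal{U}$ is dense in $\mathcal{Z}$ gives $\mathcal{Z}=\overline{\mathcal{O}}_{B_1}\cup\cdots\cup\overline{\mathcal{O}}_{B_k}$, and the irreducibility of $\mathcal{Z}$ forces $\mathcal{Z}=\overline{\mathcal{O}}_{B_i}$ for some $i$. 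Since orbits are locally closed, $\mathcal{O}_{B_i}$ is open in $\overline{\mathcal{O}}_{B_i}=\mathcal{Z}$. For any $j\ne i$, the orbit $\mathcal{O}_{B_j}$ is disjoint from $\mathcal{O}_{B_i}$, hence contained in the proper closed subset $\mathcal{Z}\setminus \mathcal{O}_{B_i}$, which has dimension strictly less than $\dim\mathcal{Z}$; but $\dim \mathcal{O}_{B_j}=\dim\GL(\mathbf{d})-1=\dim\mathcal{Z}$, a contradiction. Thus $\mathcal{U}$ is an infinite union of orbits of bricks, and since $\mathcal{U}$ is dense in $\mathcal{Z}$ we get $\mathcal{Z}=\overline{\mathcal{U}}$, which is case (2). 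Finally, (1) and (2) are mutually exclusive because in case (1) the orbit $\mathcal{O}_B$ is the unique dense orbit of bricks.

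The main (minor) obstacle is justifying the openness of the brick locus, which amounts to upper semi-continuity of $\dim\End_A(-)$ on $\rep(A,\mathbf{d})$; this is standard, since $\End_A(M)$ is the kernel of a linear map whose matrix entries are regular functions on $\rep(A,\mathbf{d})$. Once this is in place, the rest of the argument is a clean dimension comparison coupled with the irreducibility of $\mathcal{Z}$.
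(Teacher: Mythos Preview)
Your approach matches the paper's: show the brick locus $\mathcal{U}$ is open via semi-continuity, then use irreducibility of $\mathcal{Z}$ to conclude $\mathcal{U}$ is either a single orbit or an infinite union. The paper phrases openness via lower semi-continuity of $M\mapsto \dim\mathcal{O}_M$; your phrasing via upper semi-continuity of $M\mapsto\dim\End_A(M)$ is equivalent.

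There is, however, a small but genuine slip. You assert $\dim\GL(\mathbf{d})-1=\dim\mathcal{Z}$, justified by ``$\mathcal{U}$ is dense in $\mathcal{Z}$''. This is false in general and is not what density gives you: density yields $\dim\mathcal{U}=\dim\mathcal{Z}$, not that each orbit inside $\mathcal{U}$ has full dimension. For the Kronecker quiver with $\mathbf{d}=(1,1)$ one has $\dim\mathcal{Z}=2$ while every brick orbit has dimension $\dim\GL(\mathbf{d})-1=1$; this is precisely case~(2), where $c(\mathcal{Z})\ge 1$.

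Fortunately the place you actually \emph{use} the equality is inside the contradiction argument, and there it does hold: once you have established $\mathcal{Z}=\overline{\mathcal{O}}_{B_i}$, it follows that $\dim\mathcal{Z}=\dim\mathcal{O}_{B_i}=\dim\GL(\mathbf{d})-1=\dim\mathcal{O}_{B_j}$, and the contradiction goes through. So the repair is just to delete the premature equality in the first paragraph and insert this observation at the point of use. Alternatively, you can bypass the dimension comparison entirely: all brick orbits have the same dimension $\dim\GL(\mathbf{d})-1$, so no brick orbit can lie in the (strictly lower-dimensional) boundary of another; hence if $\mathcal{Z}=\overline{\mathcal{O}}_{B_i}$ then $\mathcal{O}_{B_i}$ is the only brick orbit in $\mathcal{Z}$.
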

\begin{proof}
The map $\psi:\mathcal{Z}\rightarrow \mathbb{Z}$ which sends $M$ to $\dim \mathcal{O}_M$ is known to be lower semi-continuous. This, together with the fact that orbits of bricks in $\mathcal{Z}$ are of maximal dimension yields that the collection of all bricks in $\mathcal{Z}$ forms an open set. Since $\mathcal{Z}$ is irreducible, this open set either consists of a single orbit, or infinitely many of them.
\end{proof}

The following proposition provides a dictionary between some algebraic and geometric properties of the modules under consideration.

\begin{prop}[Voigt, de la Peña \cite{dlP}] \label{Prop: vanishing Ext and orbit openennes}
Let $\mathcal{Z}$ be an irreducible component of $\rep(A,\mathbf{d})$ and $M \in \mathcal{Z}$. Then,
\begin{enumerate}
    \item If $\Ext^1(M,M)=0$, then $\mathcal{O}_M$ is open.
    \item If $\Ext^2(M,M)=0$ and $\mathcal{O}_M$ is open, then $\Ext^1(M,M)=0$.
\end{enumerate}

\end{prop}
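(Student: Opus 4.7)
The plan is to reduce both parts to the Voigt–de la Pe\~{n}a description of the scheme-theoretic tangent space $T_M\rep(A,\mathbf{d})$ in terms of the Ext groups of $M$. Differentiating the $\GL(\mathbf{d})$-action on $\rep(Q,\mathbf{d})$ at the identity produces a $k$-linear map
\[
\partial_M : \mathfrak{gl}(\mathbf{d}) \longrightarrow \rep(Q,\mathbf{d}), \qquad (\phi_i)_{i \in Q_0} \longmapsto (\phi_j M_\alpha - M_\alpha \phi_i)_{\alpha : i \to j},
\]
whose kernel is $\End_A(M)$ and whose image is exactly the tangent space $T_M\mathcal{O}_M$. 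The key fact I would invoke is that $\partial_M$ fits into a natural exact sequence
\[
0 \to \End_A(M) \to \mathfrak{gl}(\mathbf{d}) \xrightarrow{\partial_M} T_M\rep(A,\mathbf{d}) \xrightarrow{\iota} \Ext^1_A(M,M),
\]
with $\Coker(\iota)$ embedding into $\Ext^2_A(M,M)$. Equivalently, $T_M\rep(A,\mathbf{d})/T_M\mathcal{O}_M$ injects into $\Ext^1_A(M,M)$, and the gap between the two is controlled by $\Ext^2_A(M,M)$. The derivation of this complex is a concrete computation using a projective resolution of $M$ together with the fact that $\rep(A,\mathbf{d})$ is cut out inside $\rep(Q,\mathbf{d})$ by the relations in $I$.

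For part (1), if $\Ext^1_A(M,M) = 0$, the displayed sequence gives $T_M\rep(A,\mathbf{d}) = \Image(\partial_M) = T_M\mathcal{O}_M$, and therefore $\dim T_M\rep(A,\mathbf{d}) = \dim\mathcal{O}_M$ (using that the homogeneous space $\mathcal{O}_M$ is smooth). Letting $\mathcal{Z}$ be any irreducible component of $\rep(A,\mathbf{d})$ containing $M$, one has
\[
\dim\mathcal{O}_M \le \dim\mathcal{Z} \le \dim T_M\mathcal{Z} \le \dim T_M\rep(A,\mathbf{d}) = \dim\mathcal{O}_M,
\]
so $\dim\mathcal{Z} = \dim\mathcal{O}_M$ and $M$ is a smooth point of $\rep(A,\mathbf{d})$. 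Consequently $\overline{\mathcal{O}_M}$, being an irreducible closed subset of $\mathcal{Z}$ of the same dimension as $\mathcal{Z}$, must coincide with $\mathcal{Z}$; and since any $\GL(\mathbf{d})$-orbit is open in its closure, $\mathcal{O}_M$ is open in $\rep(A,\mathbf{d})$.

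For part (2), the vanishing $\Ext^2_A(M,M) = 0$ plays two roles. First, it promotes $\iota$ to an isomorphism $T_M\rep(A,\mathbf{d})/T_M\mathcal{O}_M \cong \Ext^1_A(M,M)$, since the obstruction cokernel disappears. Second, by the standard deformation-theoretic principle that unobstructed first-order deformations lift formally to all orders, the scheme $\rep(A,\mathbf{d})$ is smooth at $M$, and hence $\dim T_M\rep(A,\mathbf{d}) = \dim_M\rep(A,\mathbf{d})$. The assumption that $\mathcal{O}_M$ is open then yields $\dim_M\rep(A,\mathbf{d}) = \dim\mathcal{O}_M = \dim T_M\mathcal{O}_M$, so $T_M\rep(A,\mathbf{d}) = T_M\mathcal{O}_M$. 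Substituting this back into the isomorphism forces $\Ext^1_A(M,M) = 0$.

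The main obstacle in a complete write-up is the careful verification of the Voigt–de la Pe\~{n}a exact sequence when $I \neq 0$: in the hereditary case the sequence degenerates to the classical four-term sequence of Voigt with no $\Ext^2$-correction, but in the presence of relations one must differentiate the elements of $I$ and translate the resulting linear conditions into a homological statement about $\Ext^1_A$ and $\Ext^2_A$. The scheme-theoretic smoothness invoked in part (2) is another incarnation of the same deformation-theoretic lifting and would need to be justified via the obstruction calculus with values in $\Ext^2_A(M,M)$.
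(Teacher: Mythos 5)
The paper states this proposition as a citation to Voigt and de la Pe\~na and does not include a proof, so there is no in-text argument to compare against; what you have written is the standard tangent-space argument one would find in the cited references, and its overall structure is sound: part~(1) rests on the inequality $\dim\mathcal{Z} - \dim\mathcal{O}_M \le \dim\Ext^1_A(M,M)$ furnished by the Voigt-type identification, combined with smoothness of orbits and the fact that an orbit is open in its closure; part~(2) rests on the scheme $\rep(A,\mathbf{d})$ being smooth (hence reduced, with predictable tangent space) at $M$ when $\Ext^2_A(M,M)=0$, so that the orbit being open forces $T_M\mathcal{O}_M = T_M\rep(A,\mathbf{d})$ and kills $\Ext^1_A(M,M)$.

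One technical point in your ``key fact'' sequence should be corrected. If $T_M\rep(A,\mathbf{d})$ denotes the tangent space of the \emph{scheme} cut out by a minimal generating set of $I$ (which is the correct object here), then the map $\iota$ is in fact \emph{surjective} onto $\Ext^1_A(M,M)$: every extension class arises from a tuple $(N_\alpha)$ satisfying the relations to first order in $\epsilon$. The cokernel of $\iota$ is therefore zero, not merely ``embedding into $\Ext^2_A(M,M)$''. The group $\Ext^2_A(M,M)$ enters the argument exactly once, in the role you already give it in part~(2): it receives the obstruction to lifting first-order deformations, so its vanishing guarantees scheme-theoretic smoothness (and in particular reducedness) of $\rep(A,\mathbf{d})$ at $M$. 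If instead you intended $T_M\rep(A,\mathbf{d})$ to be the Zariski tangent space of the \emph{reduced variety}, then $\iota$ can fail to be surjective, but its cokernel then measures the failure of reducedness and is not naturally a subspace of $\Ext^2_A(M,M)$; this interpretation would also undercut the isomorphism you need in part~(2) unless you already know reducedness. So the write-up should commit to the scheme-theoretic tangent space, state the four-term exactness $0 \to \End_A(M) \to \mathfrak{gl}(\mathbf{d}) \to T_M\rep(A,\mathbf{d}) \to \Ext^1_A(M,M) \to 0$, and invoke $\Ext^2_A(M,M)=0 \Rightarrow$ smoothness as a separate obstruction-theory lemma rather than as a cokernel term of that sequence. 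With that repair the argument is complete modulo the standard deformation-theoretic inputs that you correctly identify as the remaining work.
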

Part $(1)$ of the preceding proposition is often known as the Voigt's Lemma. As shown in the first part of the following example, the converse of Voigt's Lemma does not hold in general. In other words, to have the converse of the Voigt's Lemma, the vanishing condition $\Ext^2(M,M)=0$ in part $(2)$ of the above proposition is necessary.

\begin{example}\label{Example: no converse of Voigt's lemma}
\begin{enumerate}
    \item Let $A=k[x]/\langle x^2 \rangle$. For the dimension vector $d=1$, the variety $\rep(A,\mathbf{d})$ is evidently irreducible, as it consists of only the orbit of the simple module $S$.
    In particular, $S$ is a brick and has open orbit. However, $\Ext^1(S,S)\neq 0$, because $S$ admits a non-split self-extension. We note that $\Ext^i(S,S)\neq 0$ for all $i$.

    \item Let $Q:2\doublerightarrow{\alpha}{\beta}1$ and consider the path algebra $A=kQ$. For the dimension vector $\mathbf{d}=(1,1)$, we have $\rep(A,\mathbf{d})=\{(x,y)\,|\, x, y \in $k$\}\simeq k^2$. In particular, observe that $(x,y)$ in $\rep(A,\mathbf{d})$ is a brick if and only if $(x,y) \neq (0,0)$, that is, $k^2\setminus \{(0,0)\}$ is the set of bricks in $\rep(A,\mathbf{d})$. Hence, viewed in $k^2\setminus \{(0,0)\}$, a typical orbit of a brick can be seen as a line that passes through the origin, minus the origin.    
\end{enumerate}
\end{example}

From the previous section, recall the notion of labeling bricks between functorially finite torsion classes. In general, for such a labeling brick $M$, we may have $\Ext^1(M,M) \neq 0$ and $\Ext^2(M,M)\neq 0$. Hence, even for $\tau$-tilting finite algebras, one cannot directly apply Proposition \ref{Prop: vanishing Ext and orbit openennes} to every such labeling brick to decide whether $M$ has an open orbit (For instance, see Example \ref{Example: no converse of Voigt's lemma}, part $(1)$.). However, we have the following result. In particular, over a $\tau$-tilting finite algebra, the assumption of the next theorem holds for every brick. More importantly, the theorem applies to all labeling bricks between functorially finite torsion classes over arbitrary algebras.

\begin{theorem}\cite[Theorem 6.1]{MP4}
Any labeling brick between functorially finite torsion classes has open orbit.
\end{theorem}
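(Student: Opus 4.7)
The plan is to reduce the statement to Voigt's Lemma (Proposition \ref{Prop: vanishing Ext and orbit openennes}(1)) by showing that any labeling brick $B$ between functorially finite torsion classes satisfies $\Ext^1(B,B)=0$. To set things up, I would first unpack $B$ using the explicit formula given just before the theorem. Since the cover relation $\mathcal{U} \subset \mathcal{T}$ consists of two functorially finite torsion classes, we may write $\mathcal{U} = \Fac U$ and $\mathcal{T} = \Fac T$ for support $\tau$-tilting modules $U = Z \oplus U'$ and $T = Z \oplus T'$ with $T'$ indecomposable, and the labeling brick is
$$B = T'\Big/\sum_{f: Z \to T'} \Image(f).$$
Call the submodule in the denominator $K$, so there is a short exact sequence $0 \to K \to T' \to B \to 0$ with $K$ a quotient of a finite direct sum of copies of $Z$.

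Next I would exploit the defining property of the labeling brick, namely $B \in \mathcal{U}^\perp \cap \mathcal{T}$. Since $Z \in \mathcal{U}$, this immediately gives $\Hom_A(Z, B) = 0$, and since $K$ is a quotient of a power of $Z$, applying $\Hom_A(-,B)$ shows $\Hom_A(K,B) = 0$ as well. Feeding the short exact sequence above into $\Hom_A(-,B)$ then collapses the long exact sequence to an injection
$$\Ext^1_A(B,B) \hookrightarrow \Ext^1_A(T',B).$$
Thus it is enough to kill the right-hand side.

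For this, I would invoke the Auslander–Reiten formula $\Ext^1_A(T',B) \cong D\,\overline{\Hom}_A(B, \tau T')$ and argue that every morphism $B \to \tau T'$ is already zero (not merely zero in the injectively stable category). Given any $g: B \to \tau T'$, precompose with the surjection $T' \twoheadrightarrow B$ to obtain an element of $\Hom_A(T', \tau T')$; but $T = Z \oplus T'$ is $\tau$-rigid, so $\Hom_A(T', \tau T') = 0$, which forces $g = 0$ since $T' \twoheadrightarrow B$ is epic. Hence $\Hom_A(B, \tau T') = 0$, so a fortiori $\overline{\Hom}_A(B, \tau T') = 0$, giving $\Ext^1_A(T',B) = 0$ and therefore $\Ext^1_A(B,B) = 0$. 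Voigt's Lemma then shows that $\mathcal{O}_B$ is open in $\rep(A, \underline{\dim}\,B)$.

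There is no serious technical obstacle: the whole argument is essentially a two-step chase using the AR-formula and the explicit description of $B$. The only place one must be careful is in the last paragraph, where I need the vanishing of the ordinary $\Hom$, not just its injectively stable version; the surjectivity of $T' \to B$ and the $\tau$-rigidity of the pair $(T',Z)$ are what make this lift work, and this is the one input that specifically uses that $B$ arises as a \emph{labeling} brick (rather than just an arbitrary brick in $\mathcal{T}$).
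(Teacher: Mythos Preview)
Your strategy---reducing to Voigt's Lemma by proving $\Ext^1_A(B,B)=0$---cannot succeed in general, and the paper warns of this explicitly in the paragraph immediately preceding the theorem: ``In general, for such a labeling brick $M$, we may have $\Ext^1(M,M)\neq 0$.'' The witness is Example~\ref{Example: no converse of Voigt's lemma}(1): for $A=k[x]/\langle x^2\rangle$, the unique cover relation $0\subset\modu A$ is between functorially finite torsion classes, its labeling brick is the simple module $S$, and $\Ext^1_A(S,S)\neq 0$. The orbit of $S$ is open (indeed $\rep(A,1)$ is a single point), but this openness is not a consequence of rigidity.

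The step where your argument breaks is the identification of $K$. The displayed formula $B=T'/\sum_{f\colon Z\to T'}\Image f$ that you lifted from the brick-labeling discussion is a sketch that is not literally correct when $T'$ fails to be a brick: the actual kernel is $K=\rad_{\End_A(T)}T'$, which also absorbs the images of the non-invertible endomorphisms of $T'$, not only the trace of $Z$. In the example above one has $Z=0$, $T'=A$, and $K=\rad A\cong S$, whereas your version gives $K=0$ and hence ``$B=A$'', which is not even a brick. Once $K$ carries contributions from $\rad\End_A(T')$, your key claim $\Hom_A(K,B)=0$ collapses (here $\Hom_A(S,S)=k$), and with it the injection $\Ext^1_A(B,B)\hookrightarrow\Ext^1_A(T',B)$. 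Thus your argument is valid only when $T'$ is already a brick; the substance of the theorem lies precisely in the remaining case. The paper does not reproduce the proof from \cite{MP4}, but its framing makes clear that a mechanism other than rigidity of $B$ is required to obtain openness of $\mathcal{O}_B$.
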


Using the above proposition, we get a geometric proof of the following theorem, which was first shown in \cite{ST}, using another approach (but for general fields). 
It is well known that $A$ is brick-infinite if and only if there are infinitely many non-isomorphic bricks that appear as labeling brick between functorially finite torsion classes. Moreover, as remarked before, over a brick-finite algebra, all bricks are of this form.

\begin{theorem}[\textbf{1st bBT}, {\cite{MP2}, \cite{ST}}]\label{Thm: 1st bBT}
If $A$ is brick-infinite (equivalently, $\tau$-tilting infinite), then there is no bound on the dimension of bricks.    
\end{theorem}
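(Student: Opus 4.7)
The plan is to argue by contradiction: assume $A$ is brick-infinite and yet every brick has dimension at most some fixed integer $d$. Under this assumption, every brick of $A$ lies in one of the finitely many representation varieties $\rep(A,\mathbf{d}')$ with $|\mathbf{d}'|\le d$. Since each such $\rep(A,\mathbf{d}')$ has only finitely many irreducible components, the total collection of irreducible components containing a brick is finite.

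Next, I would invoke the remark preceding the statement: since $A$ is brick-infinite, there exist infinitely many pairwise non-isomorphic bricks $\{B_\lambda\}_{\lambda\in\Lambda}$ that occur as labeling bricks of cover relations between functorially finite torsion classes. Each $B_\lambda$ lies in some $\rep(A,\mathbf{d}_\lambda)$ with $|\mathbf{d}_\lambda|\le d$, and hence in one of the finitely many irreducible components identified above. By the pigeonhole principle, some single irreducible component $\mathcal{Z}\subseteq \rep(A,\mathbf{d})$ contains infinitely many of the $B_\lambda$.

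Now comes the geometric punchline. By the preceding theorem (Theorem 6.1 of \cite{MP4}), every labeling brick between functorially finite torsion classes has open $\GL(\mathbf{d})$-orbit. Thus each of the infinitely many distinct $B_\lambda$ lying in $\mathcal{Z}$ has an open orbit $\mathcal{O}_{B_\lambda}\subseteq \mathcal{Z}$. However, since $\mathcal{Z}$ is irreducible, any two non-empty open subsets intersect, so $\mathcal{Z}$ can contain at most one open $\GL(\mathbf{d})$-orbit (two distinct orbits being disjoint). Alternatively, one may appeal directly to Proposition \ref{Prop: orbit closure of bricks}: an irreducible component containing infinitely many orbits of bricks falls in case $(2)$ of that dichotomy, where no brick orbit can be open, contradicting the openness just established. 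Either way, we obtain a contradiction.

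The only step that requires any real content is the appeal to the open-orbit theorem for labeling bricks, and that is invoked as a black box; the rest is bookkeeping (finitely many varieties, finitely many components, pigeonhole) together with the basic geometric fact that an irreducible variety cannot host two distinct open orbits. The main conceptual obstacle, which is already resolved by the cited theorem, is passing from the merely algebraic identification of bricks as labeling bricks to the geometric statement that their orbits are open; once this is in hand, the proof of the first bBT reduces to the counting argument above.
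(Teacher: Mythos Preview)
Your proposal is correct and follows essentially the same approach as the paper. The paper does not give a formal proof but, in the paragraph immediately preceding the theorem, sketches precisely this argument: brick-infiniteness yields infinitely many labeling bricks between functorially finite torsion classes, each of which has open orbit by the cited theorem, and the finiteness of irreducible components in bounded dimension then forces a contradiction via the uniqueness of open orbits---exactly the contradiction-by-pigeonhole argument you spell out.
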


The above theorem is a modern counterpart of the \textbf{First Brauer-Thrall (1st BT) Theorem}: \emph{If $A$ is representation-infinite, then there is no bound on the dimension of indecomposables in $\modu A$}. 
Hence, the preceding theorem gives a brick-analogue of the 1st BT theorem. Consequently, we refer to the above theorem as the First brick-Brauer-Thrall theorem and abbreviate it by \textbf{1st bBT}.

\medskip

We further recall that the \textbf{Second Brauer-Thrall (2nd BT) Theorem} asserts a stronger version of 1st BT theorem, as follows: \emph{If $A$ is representation-infinite, then there is a strictly increasing sequence of integers $d_1<d_2<\cdots$ such that, for each $d_i$, there is an infinite family of indecomposables of dimension $d_i$}. For more details and historical remarks on the celebrated Brauer-Thrall conjectures -- now theorems -- see \cite{Ja}, \cite{Bo}, \cite{Ri3}, and references therein.

\medskip

Although the 1st bBT is a verbatim brick-analogue of 1st BT, that is, the role of indecomposables is replaced by bricks, we remark that a verbatim brick-analogue of the 2nd BT is known to be false (for instance, consider the path algebra of the Kronecker quiver). However, we can consider a conceptual brick-analogue of the 2nd BT conjecture. Due to some observations explained in the following remark, nowadays we refer to this open problem as the \textbf{Second brick-Brauer-Thrall (2nd bBT) Conjecture}.

\begin{conjecture}[\textbf{2nd bBT}, {\cite{Mo1}}]\label{2nd bBT Conj.}
If $A$ is brick-infinite (equivalently, $\tau$-tilting infinite), there is a dimension vector $\mathbf{d}$ such that $\rep(A,\mathbf{d})$ contains infinitely many orbits of bricks.
\end{conjecture}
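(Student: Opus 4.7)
My plan is to argue by contradiction, leveraging the dichotomy of Proposition \ref{Prop: orbit closure of bricks}. Assume that for every dimension vector $\mathbf{d}$, the representation variety $\rep(A,\mathbf{d})$ contains only finitely many orbits of bricks. Then every irreducible component of $\rep(A,\mathbf{d})$ that contains a brick must fall into case $(1)$ of Proposition \ref{Prop: orbit closure of bricks}, i.e., it is the closure of a single brick orbit. Consequently, the isoclasses of bricks of dimension $\mathbf{d}$ inject into the (finite) set of irreducible components of $\rep(A,\mathbf{d})$, so each fixed dimension vector supports only finitely many bricks. By Theorem \ref{Thm: 1st bBT}, brick-infiniteness under this hypothesis forces the bricks to be spread across infinitely many dimension vectors.

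The next step is to exploit the Hasse structure of $\tors A$. Since $A$ is $\tau$-tilting infinite (Proposition \ref{Prop: tau-tilting-finiteness}), the connected component of $\tors A$ containing $0$ and $\modu A$ is an infinite poset, and by Proposition \ref{Prop: func-finite cover relations} every cover relation inside it joins functorially finite torsion classes. Each such cover relation produces a labeling brick with open orbit (by \cite[Theorem 6.1]{MP4}), and as noted in the text preceding Theorem \ref{Thm: 1st bBT}, every brick of a brick-infinite algebra arises as a labeling brick between functorially finite torsion classes. The working hypothesis therefore forces each dimension vector to carry only finitely many such labeling bricks.

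To derive the desired contradiction, I would split cases according to the representation type of $A$. If $A$ is tame, I would try to promote a one-parameter family of indecomposables, provided by the tame classification of a generic module, to a one-parameter family of bricks in a fixed dimension vector, for instance by restricting to a wide subcategory in which the generic indecomposables become simple (and hence bricks); this would realize case $(2)$ of Proposition \ref{Prop: orbit closure of bricks} directly. If $A$ is wild, I would pass via idempotent truncations $A/AeA$ and convex subquivers to a minimal brick-infinite algebra, where one can attempt an ad hoc construction of a one-parameter family of bricks. In this direction, Proposition \ref{Prop: vanishing Ext and orbit openennes} is a useful guide: any such family must lie in a component where the generic module has nonvanishing self-extensions, which is consistent with the conclusion of case $(2)$.

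The main obstacle is the pigeonhole step. Even granted Theorem \ref{Thm: 1st bBT}, one has no \emph{a priori} control on how the dimension vectors of bricks accumulate, and it is conceivable that a brick-infinite algebra supports only finitely many bricks in every fixed dimension vector. Ruling out this scenario is precisely the content of the conjecture, and seems to require a genuine geometric or combinatorial deformation argument—producing continuous moduli of bricks in a prescribed dimension vector—rather than a formal consequence of the torsion-theoretic and orbit-theoretic tools already presented. Constructing this deformation, and certifying that the generic member of the family is indeed a brick (not merely an indecomposable), is the conceptual heart of the 2nd bBT conjecture.
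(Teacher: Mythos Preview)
The statement you are attempting to prove is presented in the paper as an \emph{open conjecture}, not a theorem; there is no proof in the paper to compare against. The surrounding discussion (Subsection~\ref{Subsec: History of 2nd bBT}, Remark~\ref{Rem: 2nd bBT open for tame}, and Section~\ref{Sec: Pojective dimension of bricks and open conjectures}) makes clear that the 2nd bBT conjecture remains unresolved even for arbitrary tame algebras, and the paper's contributions are partial results and reductions (e.g., to minimal brick-infinite algebras, or to the condition in Corollary~\ref{Cor: proj. dim of tame min-brick-inf algs}), not a full proof.

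Your proposal is honest about this: you correctly identify in the final paragraph that the ``pigeonhole step'' is precisely the content of the conjecture, and that nothing in your outline produces the required continuous family of bricks. So what you have written is not a proof but a restatement of why the problem is hard. One small correction along the way: you claim that ``every brick of a brick-infinite algebra arises as a labeling brick between functorially finite torsion classes,'' but the text before Theorem~\ref{Thm: 1st bBT} only asserts that \emph{infinitely many} bricks arise this way when $A$ is brick-infinite, not all of them; the stronger claim that all bricks are such labels is the brick-finite case. Also, your tame-case sketch (``promote a one-parameter family of indecomposables \ldots\ to a one-parameter family of bricks'') hides the whole difficulty: generic indecomposables in a tame family need not be bricks, and producing a family in which they are is exactly what the paper's Theorem~\ref{Thm: 2nbBT equivalence for tame} and Corollary~\ref{Cor: proj. dim of tame min-brick-inf algs} are trying to isolate.
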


In Subsection \ref{Subsec: History of 2nd bBT}, we provide some historical remarks on the original motivations and further insight into the above conjecture. Meanwhile, we remark that the converse of the assertion of Conjecture \ref{2nd bBT Conj.} is obviously true. 
For a more geometric formulation of this conjecture, see the paragraph following Question \ref{Ques: rigid bricks}, below. Moreover, in the next subsection we state a surprisingly elementary equivalence of the open implication of 2nd bBT conjecture in terms of the pairwise Hom-orthogonal modules. We recall that a family $\{X_i\}_{i\in I}$ in $\modu A$ is said to be \emph{pairwise Hom-orthogonal}, if $\Hom_A(X_i,X_j)=0=\Hom_A(X_j,X_i)$, for each distinct pair $i, j \in I$.

\subsection{Some historical remarks, new results, and instant implications of 2nd bBT Conjecture.}\label{Subsec: History of 2nd bBT}
\,

In 2019, Conjecture \ref{2nd bBT Conj.} was first posed by the first-named author -- as Conjecture 6.6 of the arXiv preprint of \cite{Mo3} -- originally phrased it in a slightly different language. This equivalent formulation of the 2nd bBT conjecture and some results on that were obtained during the PhD studies of the author and also appeared in his dissertation, defended in June 2020 (\cite[Chapter VI]{Mo1}). 
Those results were mainly derived from the decisive role of bricks in the $\tau$-tilting finiteness of minimal representation-infinite algebras (see \cite{Mo1}). In fact, the author posed the 2nd bBT conjecture to establish a novel connection between the notion of $\tau$-tilting finiteness and some recent important studies on the geometry of Schur representations (bricks) and their stability conditions. As discussed in \cite[Chapter VI]{Mo1}, this conjecture relates some foundational algebraic problems treated in \cite{DIJ} to some important geometric phenomena studied in \cite{CKW}. 
After an extensive search in the public literature and exchange of ideas with numerous experts in the field, to our knowledge the 2nd bBT conjecture (or any equivalent version of it) had not appeared in the press before its first appearance in the arXiv preprint of \cite{Mo3}, posted in October 2019.

\medskip

In 2020, and in the substantially revised version of the arXiv preprint of \cite{STV}, a problem equivalent to the 2nd bBT conjecture was posed, and it was called ``Second $\tau$-Brauer-Thrall conjecture". This problem was similarly motivated by the $\tau$-tilting finiteness of algebras, and it is a reformulation of the 2nd bBT conjecture (hence, equivalent to Conjecture 6.6 of the arXiv preprint of \cite{Mo3}). 
We note that the main result of \cite{STV} on the band brick modules was new and important, as it provided further evidence for the correctness of the 2nd bBT conjecture. More specifically, they settled the problem of $\tau$-tilting finiteness for the family of special biserial algebras. In retrospect, their results generalized some earlier studies on $\tau$-tilting finiteness of gentle algebras that appeared in 2018, later published in \cite{Pl2}, and those on string and minimal representation-infinite biserial algebras which appeared in 2019, later published in \cite{Mo2}.

\medskip

In 2021, and in the first joint work \cite{MP4} of the authors of this manuscript, we conducted a thorough study of a reductive argument on the 2nd bBT conjecture via ``minimal $\tau$-tilting infinite algebras" -- a new family introduced and partially treated in \cite[Chapter V.I]{Mo1}. As a consequence of some more general results in \cite{MP4}, later published in 2023, we gave an important reduction of the 2nd bBT conjecture. More specifically, we proved the following: \emph{To settle the 2nd bBT conjecture in full generality, it suffices to verify the assertion for those minimal brick-infinite algebras over which almost all bricks are faithful}. For details and the original formulation, see \cite[Theorem 1.4]{MP4}, where one can also find several important properties of the minimal $\tau$-tilting-infinite algebras. Furthermore, for some more applications of our reductive arguments in the study of the 2nd bBT conjecture, with special attention to the minimal brick-infinite algebras, see Section 8 and the references therein. 

\medskip

In 2022, in \cite{MP1}, we settled the 2nd bBT conjecture for all biserial algebras and proved a noticeably stronger result over this family. This was done via our reductive approach and based on our explicit classification of all minimal brick-infinite biserial algebras. Roughly speaking, for biserial algebras we reduced the 2nd bBT conjecture to a nice family of gentle algebras, called ``generalized barbell algebras", which were already treated in \cite[Chapter V.3]{Mo1}. In retrospect, we generalized and strengthened the results of \cite{Mo1, Mo2},  \cite{Pl2}, and \cite{STV}. In the same paper, we also treated ``generic bricks" over biserial algebras. We further posed a new problem, the so-called ``Generic-brick Conjecture", which is concerned with the behavior of infinite-dimensional bricks. Our results and this conjecture also closely relate to some important recent work on infinite-dimensional bricks announced in 2020 in \cite{Se}, published in 2023.

\medskip

In 2023, motivated by the 2nd bBT conjecture and the connections to some other challenging problems, in \cite{MP2} we conducted a study of geometric interactions between bricks and different notions of rigidity. We further treated the geometric counterparts of our algebraic problems, particularly in terms of brick components and $\tau$-regular components of representation varieties (for details, see Section \ref{Sect: Bricks and tau-regular components} and references therein). Around the same time, independent studies were conducted in \cite{Pf}, where the author obtained some interesting results on closely related problems. In \cite{MP2}, we obtained some new results on the 2nd bBT conjecture, and introduced a construction, the so-called ``$\tau$-convergence", which generates a $1$-parameter family of bricks over certain minimal brick-infinite algebras (for details, see \cite[Section 6 and 8]{MP2}). This provided further evidence for the correctness of the 2nd bBT conjecture. 

\medskip

In 2024, inspired by the growing applications of bricks in different areas of research, as well as a new series of problems closely related to our earlier work, we treated the 2nd bBT conjecture in a larger pool of open problem, which we generally refer to as the ``brick-Brauer-Thrall Conjectures". More precisely, in \cite{MP3}, we showed how these problems are conceptually related. In particular, we observed that some of the other conjectures are either equivalent to (or consequence of) the 2nd bBT conjecture, thus our earlier results directly applied to them and provided new horizons for future work. Moreover, in terms of pairwise Hom-orthogonal modules, we gave a significantly more elementary equivalence of the 2nd bBT conjecture: \emph{For a brick-infinite algebra $A$, the 2nd bBT conjecture holds if and only if $\modu A$ contains an infinite family of pairwise Hom-orthogonal modules of the same dimension}. For details, see \cite{MP3}.

\medskip

We finish the subsection by noting that the 2nd bBT conjecture has some strikingly interesting consequences (see Question \ref{Ques: rigid bricks}), as well as many applications in other directions of research. Some of the applications highlighted below should be known to experts, and some more recent linkages are discussed in the following sections. 
For instance, it is known that the correctness of the 2nd bBT conjecture gives an affirmative answer to the following question, which (to our knowledge) is still an open problem. Recall that $X \in \modu A$ is said to be \emph{rigid} if $\Ext^1_A(X,X)=0$.

\begin{question}[\textbf{Rigid bricks}]\label{Ques: rigid bricks}
If every $X\in \brick(A)$ is rigid, then is algebra $A$ necessarily brick-finite?
\end{question}

To put the above question into perspective, first note that the converse of the statement is not true in general, that is, there exists a brick-finite algebra $A$ such that $\Ext^1_A(X,X)\neq 0$, for some $X \in \brick(A)$ (e.g., let $A$ be a local algebra of dimension larger than one, and $X$ the simple module). 
On the other hand, observe that the 2nd bBT conjecture -- formulated in terms of orbits of bricks -- asserts the following statement: \emph{Every $X\in \brick(A)$ has open orbit $\Leftrightarrow$ $A$ is brick-finite}. 
Note that the implication $\Leftarrow$ is always true (see Proposition \ref{Prop: orbit closure of bricks}). Thus, if the 2nd bBT conjecture (i.e., the implication $\Rightarrow$) is affirmatively settled for $A$, then a positive answer to Question \ref{Ques: rigid bricks} follows immediately. This is thanks to the fact that any rigid module has an open orbit in the irreducible component containing it, by Proposition \ref{Prop: vanishing Ext and orbit openennes}.

\medskip

In fact, Question \ref{Ques: rigid bricks} closely relates to part $(1)$ of \cite[Conjecture 6.0.1]{Mo1}, which itself is a consequence of part$(2)$ of \cite[Conjecture 6.0.1]{Mo1}, namely the 2nd bBT conjecture. As remarked earlier, our interest in the study of the density and openness of orbits of bricks was inspired by some interesting results and new perspectives from \cite{CKW}. 
With regard to Question \ref{Ques: rigid bricks}, we also note that in \cite{MP2} we treated the behavior of bricks under a stronger notion of rigidity. More precisely, we strengthened some earlier results of \cite{Dr} and obtained the following classification result: \emph{If every $X\in \brick(A)$ is $\tau$-rigid, then $A$ is locally representation-directed, thus $A$ is representation-finite.} For more details on the 2nd bBT and the conceptual connections to some other open conjectures, see \cite{MP3} and the references therein.

\section{Bricks and stability conditions}

For an algebra $A$, we let $D^b(\modu A)$ denote the bounded derived category of $\modu A$. We also let $K^b(\proj A)$ denote the full triangulated subcategory consisting of the bounded complexes of projective modules. These two categories coincide if and only if $A$ has finite global dimension.
In general, the Grothendieck group $K_0(D^b(\modu A))$ of the bounded derived category is isomorphic to $\mathbb {Z}^n$. In fact, if $\{S_1, \ldots, S_n\}$ is the set of all (isomorphism classes) of simple modules in $\modu A$, then by choosing the standard basis $\{[S_1], \ldots, [S_n]\}$, for $M$ in $\modu A$, we identify the dimension vector of $M$, denoted by $\mathbf{d}_M$, with the corresponding element $[M]$ in $K_0(D^b(\modu A))$. 
Furthermore, we note that there is a natural identification of the Grothendieck groups $K_0(K^b({\rm proj}A))$ and $K_0(\rm proj A)$. In particular, we consider the basis $\{[P_1], \ldots, [P_n]\}$, given by the indecomposable projective modules and we obtain the isomorphism $K_0(\rm projA) \cong \mathbb{Z}^n$. Moreover, by $K_0(\rm proj A)_{\mathbb{Q}}$ and $K_0(\rm proj A)_{\mathbb{R}}$ we respectively denote $K_0(\rm proj A) \otimes_{\mathbb{Z}} \mathbb{Q}$ and $K_0(\rm proj A) \otimes_{\mathbb{Z}} \mathbb{R}$. Note that, $K_0(\rm proj A)_{\mathbb{Q}}\cong \mathbb{Q}^n$ and $K_0(\rm proj A)_{\mathbb{R}}\cong \mathbb{R}^n$.

\medskip

Before we recall the important notion of (semi)stability, note that, for $\theta$ in $K_0(\proj A)_{\mathbb{R}}$ written in the above basis, and for each $M \in \modu A$, we define $\theta([M]):=\theta \cdot \mathbf{d}_M$.

\begin{definition}[\cite{Ki}]
For $\theta$ in $K_0(\proj A)$, and $M \in \modu A$, we say that
\begin{enumerate}
    \item $M$ is \emph{$\theta$-semistable} if $\theta([M])=0$, and for every submodule $L$ of $M$, we have $\theta([L])\leq 0$. 
    
    \item $M$ is \emph{$\theta$-stable} if $\theta([M])=0$, and for every nonzero proper submodule $L$ of $M$, we have the strict inequality $\theta([L])< 0$. 
\end{enumerate}
\end{definition}

\begin{example}
Let $Q$ be the quiver 
$$\xymatrix{&&3 \\ 1 & 2 \ar[l] \ar[ur] \ar[dr] &\\ &&4}$$ and consider the representation $M$ given by
$$\xymatrix{&&k \\ k & k^2 \ar[l]_{[0,1]} \ar[ur]^{[1,0]} \ar[dr]_{[1,1]} &\\ &&k}$$
We have $\mathbf{d}_M=(1,2,1,1)$. Then, for $\theta=(-2,3,-2,-2)$, we can easily check that $M$ is $\theta$-stable, using that the only proper subrepresentations of $M$ which are non-zero at $2$ have dimension vectors $(1,1,0,0), (0,1,1,0)$ and $(0,1,0,1)$.
\end{example}

Before we highlight some connections between bricks, stable modules, and some subcategories of $\modu A$, we need to recall some standard terminology.
In particular, recall that a full subcategory $\mathcal{C}$ of $\modu A$ is said to be a \emph{wide} subcategory if it is closed under extensions, kernels, and cokernels. In the following, by $\wide(A)$ we denote the set of all wide subcategories of $\modu A$. 

For an extension-closed subcategory $\mathcal{E}$ of $\modu A$, recall that an object $Y$ in $\mathcal{E}$ in called \emph{simple} if it cannot be written as a non-split extension of the form $0\rightarrow X \rightarrow Y \rightarrow Z \rightarrow 0$, where $X$ and $Z$ are nonzero objects in $\mathcal{E}$. Let $\simp(\mathcal{E})$ denote the set of all simple objects in $\mathcal{E}$.

An easy argument shows that for each $\mathcal{W} \in \wide(A)$, every simple object of $\mathcal{W}$ must be a brick. In the following, we recall an elegant bijection between wide subcategories and some particular sets of bricks. Before stating the next proposition, we recall a terminology and, following Asai \cite{As2}, we say that a subset of bricks in $\modu A$ is a \emph{semibrick} if it consists of pairwise Hom-orthogonal bricks. Namely, a subset $\mathcal{S} \subseteq \brick(A)$ is a semibrick if, for each pair of distinct elements $X$ and $Y$ in $\mathcal{S}$, we have $\Hom_A(X,Y)=0=\Hom_A(Y,X)$. We remark that a semibrick can be infinite, that is, $\mathcal{S}$ can be an infinite subset of pairwise Hom-orthogonal elements of $\brick(A)$.
Observe that bricks (respectively, semibricks) can be viewed as a conceptual generalization of simple (respectively, semisimple) modules. Henceforth, by $\s-brick(A)$ we denote the set of all semibricks in $\modu A$.

\begin{prop}[\cite{Ri2}]\label{Prop: Ringle-wide-semibrick}
There is a bijection between $\wide(A)$
 and $\s-brick(A)$. More precisely, for each $\mathcal{W} \in \wide(A)$, we have that $\simp(\mathcal{W})$ is a semibrick. And, for each semibrick $\mathcal{S} \in \s-brick(A)$, we have that $\Filt(\mathcal{S})$ is a wide subcategory. Moreover, $\simp(\Filt(\mathcal{S}))=\mathcal{S}$ and $\Filt(\simp(\mathcal{W}))=\mathcal{W}$.
\end{prop}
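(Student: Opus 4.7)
The plan is to prove the four assertions in turn: (i) for any $\mathcal{W} \in \wide(A)$, the set $\simp(\mathcal{W})$ is a semibrick; (ii) for any semibrick $\mathcal{S}$, the category $\Filt(\mathcal{S})$ is wide; (iii) $\simp(\Filt(\mathcal{S})) = \mathcal{S}$; and (iv) $\Filt(\simp(\mathcal{W})) = \mathcal{W}$. The recurring input is that a wide subcategory is an abelian (full exact) subcategory of $\modu A$, so inside it one can factor morphisms through their images and form short exact sequences without ever leaving the subcategory.

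For (i), fix $S \in \simp(\mathcal{W})$ and a nonzero $f \in \End_A(S)$. Since $\mathcal{W}$ is closed under kernels and cokernels, the canonical factorization $0 \to \Ker f \to S \to \Ima f \to 0$ lives in $\mathcal{W}$, and simplicity forces $\Ker f = 0$ (as $f \neq 0$); then the short exact sequence $0 \to S \to S \to \coker f \to 0$ forces $\coker f = 0$, so $f$ is an isomorphism and $\End_A(S)$ is a division $k$-algebra. For Hom-orthogonality, a nonzero morphism $f : S \to T$ between non-isomorphic simples of $\mathcal{W}$ would, by the same factorization argument, satisfy $\Ker f = 0$ and $\Ima f = T$, making $f$ an isomorphism — a contradiction.

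For (ii), closure under extensions is built into $\Filt(\mathcal{S})$, so the real work is closure under kernels and cokernels. A convenient preliminary lemma, proven by induction on the length of a filtration of $N$, is that any nonzero morphism $f : B \to N$ with $B \in \mathcal{S}$ and $N \in \Filt(\mathcal{S})$ is injective with image isomorphic to $B$: taking a bottom step $0 \to B' \to N \to N' \to 0$ with $B' \in \mathcal{S}$, either the composite $B \to N \to N'$ is nonzero (hence injective by induction, which forces $f$ injective) or $f$ factors through $B'$ (forcing $B \cong B'$ by Hom-orthogonality, and then making $f$ an isomorphism since $B$ is a brick). With this lemma in hand, I would induct on $\dim_k M + \dim_k N$ to prove simultaneously that $\Ker f$, $\Ima f$, $\coker f$, and the quotient $N/f(B)$ (for $B$ a bottom filtration step of $M$ on which $f$ is injective) all lie in $\Filt(\mathcal{S})$; the cases $f|_B = 0$ and $f|_B \neq 0$ each reduce to a strictly smaller instance using closure under extensions. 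Arranging this simultaneous induction so that all pieces remain inside $\Filt(\mathcal{S})$ is the main technical obstacle.

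Finally, (iii) and (iv) fall out quickly. For (iii), any $B \in \mathcal{S}$ is simple in $\Filt(\mathcal{S})$: given a short exact sequence $0 \to X \to B \to Y \to 0$ with $X, Y \in \Filt(\mathcal{S})$ and $X \neq 0$, the bottom step of a filtration of $X$ produces a nonzero map $B' \to B$ with $B' \in \mathcal{S}$, and the preliminary lemma together with $B$ being a brick forces this map to be an isomorphism, giving $X = B$ and $Y = 0$. Conversely, any $T \in \simp(\Filt(\mathcal{S}))$ has a filtration by elements of $\mathcal{S}$, and its lowest step is a short exact sequence in $\Filt(\mathcal{S})$ which, by simplicity of $T$, collapses to yield $T \in \mathcal{S}$. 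For (iv), the inclusion $\Filt(\simp(\mathcal{W})) \subseteq \mathcal{W}$ is immediate from closure under extensions. The reverse inclusion follows by induction on $\dim_k M$ for $M \in \mathcal{W}$: either $M$ is simple in $\mathcal{W}$, or by abelianness it admits a nonzero proper subobject in $\mathcal{W}$ whose cokernel also lies in $\mathcal{W}$, and both pieces have strictly smaller dimension, so their inductive filtrations assemble into a filtration of $M$ by objects of $\simp(\mathcal{W})$.
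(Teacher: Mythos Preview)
The paper does not supply its own proof of this proposition; it is simply cited from Ringel \cite{Ri2}, so there is no in-paper argument to compare against. Your outline is essentially the standard proof and is correct in all its main moves: the Schur-lemma argument for (i), the filtration-length induction for the preliminary injectivity lemma, and the dimension inductions for (iii) and (iv) are all fine.

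The only place you are deliberately vague is (ii), where you flag the simultaneous induction as ``the main technical obstacle''. It is worth noting that this obstacle dissolves once you separate out one more preliminary fact: for $B \in \mathcal{S}$ and $N \in \Filt(\mathcal{S})$, any monomorphism $B \hookrightarrow N$ has cokernel in $\Filt(\mathcal{S})$. This follows by the same induction on the filtration length of $N$ as your injectivity lemma (in the case where $B \to N'$ is nonzero, the snake lemma gives $0 \to B' \to N/B \to N'/B \to 0$, and the last term is in $\Filt(\mathcal{S})$ by induction). With this in hand, your Case~B in the main induction goes through cleanly: $N/f(B) \in \Filt(\mathcal{S})$ is already known, and one applies the induction hypothesis to $\bar f \colon M/B \to N/f(B)$, noting that $\Ker f \cong \Ker \bar f$ and $\coker f \cong \coker \bar f$. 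So the ``simultaneous'' aspect is not really needed; two short preliminary lemmas followed by a straightforward induction on $\dim_k M$ suffice.
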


As further shown recently, the idea of filtration by bricks plays a decisive role in the study of some fundamental problems in representation theory, particularly in connection with the maximal green sequences and lattice theory of torsion classes. More specifically, following \cite[Appendix]{KD}, in a recent survey by Ringel \cite{Ri1}, the notion of brick-filtration is studied in a more general setting. In particular, it is shown that any module has brick chain filtrations (see \cite[Theorem 1.2]{Ri1}). Moreover, the earlier work of Demonet in \cite{KD} has inspired new studies of maximal chains of bricks, which is further developed in \cite{AI+}, where the authors introduced the notions of brick-splitting torsion pairs and brick-directed algebras, respectively as the modern analogue and generalizations of the classical splitting torsion pairs and representation-directed algebras (for the definitions and more details, see \cite{AI+}).

\medskip

We also recall the following result from \cite{MS}. This, together with the preceding proposition, results in an injective map from $\s-brick(A)$ to $\tors(A)$.

\begin{proposition}\label{Prop: MS injective}
For an algebra $A$, there is an injective map $\wide(A)$ to $\tors(A)$, which sends $\mathcal{W}$ to $\Filt(\Fac(\mathcal{W}))$.
\end{proposition}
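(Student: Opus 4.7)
The map $\Phi : \mathcal{W} \mapsto \Filt(\Fac\mathcal{W})$ sends each wide subcategory to the smallest torsion class containing $\mathcal{W}$, namely $T(\mathcal{W})$, as recalled earlier in the paper. To show $\Phi$ is injective, the plan is to produce an explicit left inverse $\alpha : \tors(A) \to \wide(A)$, following the approach of Marks--\v{S}\v{t}ov\'\i\v{c}ek. The candidate is
$$\alpha(\mathcal{T}) := \{X \in \mathcal{T} : \ker(f) \in \mathcal{T} \text{ for every } f : Y \to X \text{ with } Y \in \mathcal{T}\}.$$
Injectivity of $\Phi$ will then follow at once from the identity $\alpha \circ \Phi = \mathrm{id}_{\wide(A)}$.

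The first step is to verify that $\alpha(\mathcal{T})$ is genuinely a wide subcategory of $\modu A$. Closure under kernels is essentially built into the definition: if $X_1, X_2 \in \alpha(\mathcal{T})$ and $f : X_1 \to X_2$, then $\ker(f) \in \mathcal{T}$, and any morphism into $\ker(f)$ from an object of $\mathcal{T}$ composes to a morphism into $X_1$, so its kernel lies in $\mathcal{T}$. Closure under cokernels exploits that $\mathcal{T}$ is closed under quotients, so $\mathrm{coker}(f) \in \mathcal{T}$, combined with a short diagram chase to verify the kernel-trapping property. Closure under extensions is handled by the snake lemma applied to a short exact sequence in $\alpha(\mathcal{T})$.

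The second step is to establish $\alpha(T(\mathcal{W})) = \mathcal{W}$. For the inclusion $\mathcal{W} \subseteq \alpha(T(\mathcal{W}))$, take $X \in \mathcal{W}$ and any $f : Y \to X$ with $Y \in T(\mathcal{W}) = \Filt(\Fac\mathcal{W})$. Factor $f = \iota \circ \pi$ through $\Image(f)$; since $\Image(f) \subseteq X \in \mathcal{W}$ is a submodule that is simultaneously a quotient of $Y$, one uses the closure properties of $\mathcal{W}$ (under kernels and cokernels of maps between its objects) together with an induction on the filtration length of $Y$ in $\Filt(\Fac\mathcal{W})$ to place $\Image(f) \in \mathcal{W}$; this immediately gives $\ker(f) \in T(\mathcal{W})$. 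For the reverse inclusion, take $X \in \alpha(T(\mathcal{W}))$, fix a filtration of $X$ with successive quotients in $\Fac\mathcal{W}$, and induct on its length: an epimorphism $W \twoheadrightarrow X/X'$ from some $W \in \mathcal{W}$ (representing the top quotient) can be lifted against the filtration, and the kernel-trapping condition defining $\alpha$ forces the intermediate kernels and cokernels to remain within $T(\mathcal{W})$, after which closure of $\mathcal{W}$ under cokernels upgrades membership from $T(\mathcal{W})$ to $\mathcal{W}$.

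The hard part is the inclusion $\alpha(T(\mathcal{W})) \subseteq \mathcal{W}$: one must promote the a priori weaker statement that various auxiliary kernels lie in $T(\mathcal{W})$ to the statement that $X$ itself lies in the strictly smaller class $\mathcal{W}$. The delicate point is to arrange the diagram so that every object appearing as a cokernel is sandwiched between two objects already known to be in $\mathcal{W}$, which is where the defining property of $\alpha$ interacts nontrivially with the wide-subcategory axioms on $\mathcal{W}$ and the filtration-closure of $T(\mathcal{W})$. Once this step is complete, the composition $\alpha \circ \Phi$ is the identity and injectivity of $\Phi$ follows.
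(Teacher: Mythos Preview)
The paper does not give its own proof of this proposition; it is stated as a recollection from \cite{MS}. Your proposal follows exactly the approach of \cite{MS}, constructing the left inverse $\alpha$, which is the standard and correct route.

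Two steps in your execution are under-justified, however. For the inclusion $\mathcal{W} \subseteq \alpha(T(\mathcal{W}))$, you assert that $\Image(f) \in \mathcal{W}$ ``immediately gives $\ker(f) \in T(\mathcal{W})$'', but torsion classes are not closed under kernels, so the short exact sequence $0 \to \ker(f) \to Y \to \Image(f) \to 0$ with $Y \in T(\mathcal{W})$ and $\Image(f) \in \mathcal{W}$ does not by itself force $\ker(f) \in T(\mathcal{W})$. The induction on the filtration length of $Y$ must be arranged to establish $\Image(f) \in \mathcal{W}$ and $\ker(f) \in T(\mathcal{W})$ simultaneously, via the snake lemma at each stage.

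More seriously, for the hard inclusion $\alpha(T(\mathcal{W})) \subseteq \mathcal{W}$, your sketch does not explain how ``closure of $\mathcal{W}$ under cokernels upgrades membership from $T(\mathcal{W})$ to $\mathcal{W}$'': cokernel-closure of $\mathcal{W}$ only applies to morphisms between objects already in $\mathcal{W}$, whereas your intermediate objects are only placed in $T(\mathcal{W})$. One clean way to close this gap is to argue on simple objects. Any simple $S$ of the wide subcategory $\alpha(T(\mathcal{W}))$ lies in $T(\mathcal{W})$ and hence receives a nonzero map from some simple $S'$ of $\mathcal{W}$; the image of this map lies in $\alpha(T(\mathcal{W}))$ and is therefore all of $S$. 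A short argument (any proper nonzero $\alpha(T(\mathcal{W}))$-subobject of $S'$ would receive a nonzero map from $\mathcal{W}$, contradicting simplicity of $S'$ in $\mathcal{W}$) shows $S'$ remains simple in $\alpha(T(\mathcal{W}))$, forcing $S' \cong S \in \mathcal{W}$. Then Proposition~\ref{Prop: Ringle-wide-semibrick} gives $\alpha(T(\mathcal{W})) = \mathcal{W}$.
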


The following result establishes an interesting connection between the notion of (semi)stability and certain wide subcategories.

\begin{prop}[\cite{Ki}]
For $\theta$ in $K_0(\proj A)$, let $\mathcal{W}_{\theta}:=\{M \in  \modu A\,|\, M \text{ is $\theta$-semistable} \}$. Then,
\begin{enumerate}
    \item $\mathcal{W}_{\theta}$ is a wide subcategory.
    \item The simple objects in $\mathcal{W}_{\theta}$ are exactly the $\theta$-stable objects.
    \item Every $\theta$-stable module is a brick.
\end{enumerate}
\end{prop}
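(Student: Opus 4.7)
The three parts all rest on the fundamental observation that $\theta$ is additive on short exact sequences: if $0 \to L \to M \to N \to 0$ is exact, then $\mathbf{d}_M = \mathbf{d}_L + \mathbf{d}_N$ and hence $\theta([M]) = \theta([L]) + \theta([N])$. This is the single tool I would use throughout.

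For part (1), I would verify closure under extensions, kernels, and cokernels separately. Extensions: given $0 \to L \to M \to N \to 0$ with $L,N \in \mathcal{W}_\theta$, clearly $\theta([M])=0$; for any submodule $L'\subseteq M$, write $\theta([L'])=\theta([L'\cap L])+\theta([L'/(L'\cap L)])$, where $L'\cap L$ is a submodule of $L$ and $L'/(L'\cap L)$ embeds as a submodule of $N$, so both summands are $\le 0$. Kernels and cokernels: given $f:M\to N$ with $M,N\in \mathcal{W}_\theta$, look at $0\to \ker f \to M \to \image f \to 0$ and $0\to \image f \to N \to \coker f \to 0$. Since $\image f$ is a submodule of $N$ we have $\theta([\image f])\le 0$; since $\ker f$ is a submodule of $M$ we have $\theta([\ker f])\le 0$; and these sum to $\theta([M])=0$, forcing both to be zero. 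This immediately gives $\theta([\coker f])=0$ as well. Semistability of $\ker f$ is then immediate since its submodules are submodules of $M$. Semistability of $\coker f$ uses that every submodule of $\coker f$ is of the form $N'/\image f$ for some $N'\supseteq \image f$ in $N$, and $\theta([N'/\image f])=\theta([N'])\le 0$.

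For part (2), the stable $\Rightarrow$ simple direction: any short exact sequence $0 \to L \to M \to N \to 0$ with $L,N\in\mathcal{W}_\theta$ nonzero would exhibit $L$ as a proper nonzero submodule of $M$ with $\theta([L])=0$, contradicting stability. For the reverse direction, if $M$ is simple in $\mathcal{W}_\theta$ but not $\theta$-stable, there is a nonzero proper submodule $L\subset M$ with $\theta([L])=0$ (we already have $\theta([L])\le 0$ by semistability). Then both $L$ and $M/L$ lie in $\mathcal{W}_\theta$ by the cokernel argument from (1) applied to the inclusion of $L$, giving a nontrivial extension within $\mathcal{W}_\theta$ and contradicting simplicity.

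For part (3), let $M$ be $\theta$-stable and $f:M\to M$ a nonzero endomorphism. Applying the kernel/image analysis of (1) to $f$: both $\ker f$ and $\image f$ are submodules of $M$ with $\theta$-value zero. Stability forces each to be either $0$ or all of $M$, and since $f\neq 0$ we conclude $\ker f=0$ and $\image f=M$, i.e.\ $f$ is an isomorphism. To upgrade ``every nonzero endomorphism is invertible'' to ``$\End_A(M)=k$'', I would invoke algebraic closure: any $f\in\End_A(M)$ has an eigenvalue $\lambda\in k$, and then $f-\lambda\cdot 1_M$ is non-invertible, hence zero, so $f=\lambda\cdot 1_M$.

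No step looks genuinely hard; the only mild subtlety is the semistability of $\coker f$ in (1), which requires the bijection between submodules of $\coker f$ and submodules of $N$ containing $\image f$. Everything else is bookkeeping with the additivity of $\theta$.
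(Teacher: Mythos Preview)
Your argument is correct in all three parts; the additivity of $\theta$ on short exact sequences is indeed the only tool needed, and you deploy it cleanly. The paper does not supply its own proof of this proposition---it simply cites King \cite{Ki}---so there is nothing to compare against beyond noting that what you have written is the standard argument one finds in King's paper and in the subsequent literature.

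One minor remark on presentation: in part (2), when you invoke ``the cokernel argument from (1) applied to the inclusion of $L$'', you are implicitly using that $L$ is already in $\mathcal{W}_\theta$ (which you need for the hypothesis of (1) to apply). You do verify this just before, so the logic is fine, but it would read more smoothly to argue directly that $M/L$ is semistable (its submodules are $L'/L$ with $L\subseteq L'\subseteq M$, and $\theta([L'/L])=\theta([L'])\le 0$), rather than routing through (1).
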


\begin{example}
For $Q:2\doublerightarrow{\alpha}{\beta}1$, consider the path algebra $A=kQ$, and let $\theta=(1,-1)$.
From the definition, it follows that if $M \in \modu A$ is $\theta$-semistable, then $\theta.\mathbf{d}_M=0$, therefore $\mathbf{d}_M=(-d,d)$. This implies that if $M$ is indecomposable, then it is a regular representation of the Kronecker quiver. Consequently, $\mathcal{W}_\theta$ is a subcategory of the category of all regular representations of $Q$. It is not hard to check that the indecomposable regular representations of dimension vector $(1,1)$ are all $\theta$-stable. Since $\mathcal{W}_\theta$ is closed under extensions, we see then that $\mathcal{W}_\theta$ coincides with the subcategory of $\modu A$ consisting of all regular representations. In particular, the $\theta$-stable modules are exactly the quasi-simple regular representations.
\end{example}

\section{Bricks and $\tau$-regular components}\label{Sect: Bricks and tau-regular components}

For an algebra $A$ of rank $n$, and a fixed dimension vector $\mathbf{d}$, by $\Irr(A,\mathbf{d})$ we denote the set of all irreducible components of the representation variety $\rep(A,\mathbf{d})$. Moreover, we set $\Irr(A)=\bigcup_{\mathbf{d}\in \mathbb{Z}_{\geq 0}^n}\Irr(A,\mathbf{d})$.

For each $\mathcal{Z} \in \Irr(A)$, define 
$c(\Z):=\min \{\dim(\Z) - \dim(\mathcal{O}_Z) \,| Z \in \Z \}$. Observe that $c(\Z)$ is the generic number of parameters. Moreover, we put $$e(\Z):=\min \{\dim \Ext^1_A(Z,Z) \,| Z \in \Z \} \hskip 0.5cm \text{and} \hskip 0.5cm h(\Z):=\min \{\dim \Hom_A(Z,\tau_A Z) \,| Z \in \Z \}.$$
We observe that $c(\mathcal{Z})$, $e(\mathcal{Z})$ and $h(\mathcal{Z})$ are attained on a non-empty open subset of $\mathcal{Z}$, and we always have the inequalities 
$c(\mathcal{Z})\leq e(\mathcal{Z})\leq h(\mathcal{Z})$. The following notion first appeared in \cite{GLS} under the name of strongly reduced components. In some later works, such components appeared under the name of (generically) $\tau$-reduced components. However, to avoid further confusion, henceforth we adopt the suitably justified terminology recently proposed in \cite{BS} and call these components (generically) $\tau$-regular.

\medskip

\begin{definition}
An irreducible component $\mathcal{Z}\in \Irr(A)$ is called \emph{(generically) $\tau$-regular} provided that $c(\mathcal{Z})= e(\mathcal{Z})= h(\mathcal{Z})$.
\end{definition}

\begin{example}
\begin{enumerate}
    \item For an acyclic quiver $Q$ and $A=kQ$, every $\mathcal{Z} \in \Irr(A)$ is $\tau$-regular.
    \item Let $A$ be an arbitrary algebra and $\mathcal{Z}\in \Irr(A)$. If $\mathcal{Z}$ contains a module $M$ with $\pd_A(M)\leq 1$, then $\mathcal{Z}$ is $\tau$-regular. To conclude this fact, first note that the set of modules of projective dimension at most $1$ forms an open set in $\mathcal{Z}$. Moreover, for a module $M$ in such an open set, one has $\dim(\Z) - \dim(\mathcal{O}_M)=\dim \Ext^1_A(M,M)=\dim \Hom_A(M,\tau M)$. 
\end{enumerate}
\end{example}

Below, we recall an important characterization of $\tau$-regular components. Before we state this result, we need to recall some terminology and introduce some notations.

As before, by considering the basis $\{[P_1], \ldots, [P_n]\}$ given by the indecomposable projective $A$-modules, we have the isomorphism $K_0(\rm projA) \cong \mathbb{Z}^n$.
We recall that every element of $K_0(\rm proj A)$ is often called a \emph{$g$-vector} of $A$. 
For each $g$-vector, say $\theta$, there is a canonical way to write $\theta$ as a difference of two vectors with non-negative entries. More precisely, let $\theta$ = $\theta^+ - \theta^-$, where $\theta^+ = (\theta'_i)$ such that $\theta_i' := {\rm max}\{0, \theta_i\}$, and $\theta^-=(\theta''_i)$ given by $\theta''_i := -{\rm min}\{0, \theta_i\}$. Let $P(\theta^+) := \oplus_{i=1}^n P_i^{(\theta^+)_i}$ and $P(\theta^-) := \oplus_{i=1}^n P_i^{(\theta^-)_i}$. Then, to $\theta$, we can associate the vector space $\Hom_A(P(\theta^-), P(\theta^+))$, where each element represents a $2$-term projective complex $P(\theta^-) \to P(\theta^+)$ in $K^b({\proj}(A))$. 

\medskip

Note that, for each $g$-vector $\theta$, we can view $\Hom_A(P(\theta^-), P(\theta^+))$ as an affine variety, consisting of points $f = (f_i)$ in the vector space $\prod_{i=1}^n\Hom_k(P(\theta^-)_i, P(\theta^+)_i)$, where the linear conditions $f_jP(\theta^-)(\alpha) = P(\theta^+)(\alpha)f_i$ are simultaneously satisfied, for all $\alpha: i \to j$ in $Q_1$. Alternatively, every $f = (f_i)$ in $\Hom_A(P(\theta^-), P(\theta^+))$ can be seen as a matrix of size $|\theta^+| \times |\theta^-|$, where for a vector $u \in \mathbb{Z}^n$, by $|u|$ we denote the sum of its entries. Each entry of this matrix associated to $f = (f_i)$ represents the maps between some indecomposable projective $A$-modules. In particular, if the entry corresponds to a map $P_i \rightarrow P_j$, there is a basis
$B_{ij}=\{p_{ij}^1, \ldots, p_{ij}^{s_{ij}}\}$ of $\Hom_A(P_i, P_j) = e_iAe_j$, and every entry corresponding to $P_i\to P_j$ is a linear combinations of the elements in $B_{ij}$.

\medskip

The following theorem was proved by Plamondon and gives an elegant description of $\tau$-regular components. For more details and the proof, see \cite{Pl1}. 
To state the following result more succinctly, we call a $g$-vector $\theta$ \emph{sincere} if for an element $f$ in general position in $\Hom_A(P(\theta^-), P(\theta^+))$, we have ${\rm ker}f \subseteq {\rm rad} P(\theta^-)$.

\begin{theorem}\label{Plamondon's result}
For algebra $A$, there is a bijection between the set of all $\tau$-regular components in $\Irr(A)$ and the set of all sincere $g$-vectors $\theta \in K_0(\proj A)$. 

More precisely, given $\theta$ with $\theta$ = $\theta^+ - \theta^-$, there is a nonempty open subset $O$ in  $\Hom_A(P(\theta^-), P(\theta^+))$ such that $\overline{\bigcup_{f \in O} \coker f}$ is a $\tau$-regular component in $\Irr(A)$. Moreover, every $\tau$-regular component can be obtained this way.
\end{theorem}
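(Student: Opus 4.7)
The plan is to set up explicit maps in both directions between sincere $g$-vectors and $\tau$-regular components, and then show they are mutually inverse by a single dimension computation.

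Starting from a sincere $g$-vector $\theta = \theta^+ - \theta^-$, I would form the affine space $H(\theta) := \Hom_A(P(\theta^-), P(\theta^+))$, on which the product group $G(\theta) := \Aut_A(P(\theta^-)) \times \Aut_A(P(\theta^+))$ acts by $(g,h)\cdot f = hfg^{-1}$. By sincerity, for $f$ in a nonempty open subset $O \subseteq H(\theta)$ we have $\ker f \subseteq \rad P(\theta^-)$, so $P(\theta^-) \xrightarrow{f} P(\theta^+) \to \coker f \to 0$ is a minimal projective presentation; hence $\underline{\dim}\,\coker f$ is constant on $O$, equal to some vector $\mathbf{d}(\theta)$, and the $g$-vector of every such $\coker f$ is exactly $\theta$. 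Define $\mathcal{Z}(\theta)$ as the closure in $\rep(A,\mathbf{d}(\theta))$ of the $\GL(\mathbf{d}(\theta))$-saturation of $\{\coker f : f \in O\}$. In the opposite direction, starting from a $\tau$-regular component $\mathcal{Z}$, pick a generic $M \in \mathcal{Z}$, take its minimal projective presentation $P^{-1} \xrightarrow{f_M} P^0 \to M \to 0$, and set $\theta_\mathcal{Z} := [P^0] - [P^{-1}]$. Minimality of the presentation forces $\ker f_M \subseteq \rad P^{-1}$, so $\theta_\mathcal{Z}$ is sincere.

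The key calculation is then to show $c(\mathcal{Z}(\theta)) = h(\mathcal{Z}(\theta))$, which combined with the always-valid inequality $c \le e \le h$ forces $\tau$-regularity and simultaneously shows that $\mathcal{Z}(\theta)$ is an actual irreducible component rather than merely a closed irreducible subset. A tangent-space count for the cokernel map $O \to \rep(A,\mathbf{d}(\theta))$ gives
\[
\dim \mathcal{Z}(\theta) = \dim H(\theta) + \dim \GL(\mathbf{d}(\theta)) - \dim G(\theta) - \dim\Hom_A(\coker f, \coker f) + \dim\Aut_A(\coker f)
\]
for generic $f$, so that $c(\mathcal{Z}(\theta)) = \dim H(\theta) - \dim G(\theta) + \dim\Aut_A(\coker f) - \dim\End_A(\coker f) + \text{correction}$. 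The identity one needs is that this expression equals $\dim\Hom_A(\coker f, \tau\coker f)$, which follows from the standard identification of $\Hom_{K^b(\proj A)}(P_M, P_N[1])$ (morphisms of $2$-term projective complexes modulo homotopy) with an appropriate quotient of $\Hom_A(P(\theta_N^-), P(\theta_M^+))$ and the Auslander-Reiten formula $\Ext^1_A(M,N) \cong D\overline{\Hom}_A(N,\tau M)$. Once this identity is in hand, $\mathcal{Z}(\theta_\mathcal{Z}) \supseteq \overline{\mathcal{O}}_M$ inside $\mathcal{Z}$ with both sides of the same dimension, and mutual inverseness of $\theta \leftrightarrow \mathcal{Z}(\theta)$ drops out.

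The main obstacle is exactly this identification of generic $\Hom(M, \tau M)$ with the homotopy-morphism space of $2$-term projective complexes: one has to carefully account for the locus where the presentation ceases to be minimal (i.e.\ where the $g$-vector of $\coker f$ jumps), ensure the stratification of $H(\theta)$ by isomorphism type of cokernel is compatible with the $G(\theta)$-orbit structure, and verify the correction terms coming from possible projective summands of $M$. This is essentially the representation-theoretic translation of the cluster-categorical framework (in the spirit of Derksen-Weyman-Zelevinsky and Palu), and is the only nontrivial step; once available, the two maps $\theta \mapsto \mathcal{Z}(\theta)$ and $\mathcal{Z} \mapsto \theta_\mathcal{Z}$ are obviously mutually inverse by their very construction.
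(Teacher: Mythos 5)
The paper does not give a proof of this theorem; it is stated as Plamondon's result with the reader referred to \cite{Pl1}, so there is no in-text argument to compare against. On the merits, your sketch is aimed in the right direction and follows the approach actually used by Plamondon (and earlier Gei\ss--Leclerc--Schr\"oer): pass from $\theta$ to $H(\theta)=\Hom_A(P(\theta^-),P(\theta^+))$, use sincerity to get generic minimality of the presentation, and prove $\tau$-regularity of $\overline{\bigcup_{f\in O}\coker f}$ by showing $c=h$, combined with the a priori chain $c\le e\le h$. Your identification of the ``obstacle'' (relating the generic value of $\Hom_A(M,\tau M)$ to homotopy classes of maps of two-term projective complexes, via the Auslander--Reiten formula) is also the right one.

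However, the dimension computation as written does not close. In your displayed formula for $\dim\mathcal{Z}(\theta)$ the last two terms, $-\dim\Hom_A(\coker f,\coker f)+\dim\Aut_A(\coker f)$, cancel identically, since $\Aut_A(N)$ is a dense open subset of the affine space $\End_A(N)$ and hence has the same dimension. After cancellation you are left with $\dim\mathcal{Z}(\theta)=\dim H(\theta)+\dim\GL(\mathbf{d}(\theta))-\dim G(\theta)$, and then $c(\mathcal{Z}(\theta))=\dim H(\theta)-\dim G(\theta)+\dim\End_A(\coker f)$, whereas the line that follows in your proposal asserts $c(\mathcal{Z}(\theta))=\dim H(\theta)-\dim G(\theta)+(\text{a vanishing difference})+\text{correction}$; these two expressions are incompatible unless the ``correction'' silently absorbs $\dim\End_A(\coker f)$, which is not a correction term. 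The right bookkeeping goes through the chain-endomorphism algebra of the two-term complex $P(\theta^-)\to P(\theta^+)$: one has $\dim\Hom_{K^b(\proj A)}(P_\bullet,P_\bullet[1])=\dim H(\theta)-\dim G(\theta)+\dim\End_{\mathrm{ch}}(P_\bullet)$, and one must then compare $\End_{\mathrm{ch}}(P_\bullet)$ with $\End_A(M)$ for a generic cokernel $M$ and use minimality to identify the left side with $\dim\Hom_A(M,\tau M)$. As written, your sketch defers exactly this identification to a paragraph labelled ``the main obstacle'' without actually carrying it out, so the argument is not yet a proof: the central equality $c(\mathcal{Z}(\theta))=h(\mathcal{Z}(\theta))$, which is what forces both $\tau$-regularity and the fact that $\mathcal{Z}(\theta)$ is a genuine irreducible component, remains unestablished.
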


\begin{remark}
In the preceding theorem, the condition that $\theta$ is sincere is equivalent to the condition that no copy of $-[P_i]$, for some $i$, appears in the canonical decomposition of $\theta$.
\end{remark}

\section{Semi-invariants and stably-discrete algebras}\label{Sec: Semi-invatiant and CKW conj.}

As before, for an algebra $A$ and a dimension vector $\mathbf{d}$, if $\mathcal{Z}\in \Irr(A,\mathbf{d})$, then we consider the action of the general linear group $\GL(\mathbf{d})$ on $\Z$.
This naturally induces the action of $\GL(\mathbf{d})$ on the coordinate ring $\mathcal{Z}$, given by $g.f(z)=f(g^{-1}.z)$, where $g \in \GL(\mathbf{d})$ and $f \in k[\Z]$. Consequently, one can ask about the invariants of this action, the set denoted by $k[\Z]^{\GL(\mathbf{d})}$, consisting of all $f \in k[\Z]$ for which $g.f=f$ for all $g$ in $\GL(\mathbf{d})$. 
One can simply check that $k[\Z]^{\GL(\mathbf{d})}=k$ is the trivial subring of $k[\Z]$. This is because for every representation $M$ in $\mathcal{Z}$ and every oriented cycle $p$ in $Q$, the admissibility of ideal $I$ implies that $M(p)$ is nilpotent. For the rudiments of (semi-)invariant theory in the context of this manuscript, we refer to \cite{DW}.

\medskip

In the following, we primarily consider the ring of semi-invariants, denoted by $\SI(\Z):=k[\Z]^{\SL(\mathbf{d})}$, where the special linear group $\SL(\mathbf{d}):=\SL(d_1)\times \cdots \times\SL(d_n)$ acts on $k[\Z]$, as specified above.
For every $g$-vector $\theta=(\theta_i)\in K_0(\proj A)\simeq \mathbb{Z}^n$, define $\mathcal{X}_{\theta}: \GL(\mathbf{d})\rightarrow k^*$ that sends $g=(g_i)_{i\in Q_0)}$ to $\prod_{i\in Q_0} (\det g_i)^{\theta_i}$.
We note that $\mathcal{X}_\theta$ is a multiplicative character and all such characters can be described like this. We set 
$$\SI(\Z)_{\theta}:=\{f \in k[\Z] \, | \, g.f=\mathcal{X}_\theta(g)f, \text{ for all }  g \in \GL(\mathbf{d}) \},$$
and consider that weight decomposition $\SI(\Z)=\bigoplus_{\theta \in K_0(\proj A)} \SI(\Z)_{\theta}$.

\subsection{Schofield semi-invariants}

Let $\theta =[P_0]-[P_{-1}] \in K_0(\proj A)$, and for each $f$ in $\Hom(P_{-1},P_0)$, set $V_f:= \coker f$. We note that $f$ is the minimal projective presentation of $V_f$ if and only if ${\rm ker}f \subseteq {\rm rad} P(\theta^-)$. If $f$ is in general position, then $f$ is the minimal projective presentation of $V_f$ whenever $\theta$ is sincere.

\medskip

In all cases, we let $P$ be the maximal direct summand of $P(\theta^-)$ that belongs to the kernel of $f$.
Then, for $M \in \modu A$, if we apply the functor $\Hom(-,M)$ to the exact sequence $ P_{-1}\xrightarrow{f} P_0 \rightarrow V_f \rightarrow 0$, we obtain the following exact sequence
$$0 \to \Hom(V_f,M) \to \Hom(P_0,M) \xrightarrow{f_M} \Hom(P_1,M) \to \D\Hom(M, \tau V_f) \oplus \Hom(P,M) \to 0.$$
Now, observe that 
\begin{enumerate}
    \item $f_M$ is a square matrix if and only if $\dim \Hom(P_0,M) - \dim \Hom(P_1,M)=0$. This is the case if and only if $\theta \cdot \mathbf{d}_M=0$, namely, $\theta$ is orthogonal to the dimension vector of $M$.

    \item $f_M$ is an invertible matrix if and only if $\Hom(V_f,M)=0=\Hom(M,\tau V_f) = \Hom(P,M)$. This is the case if and only if $M \in \big(V_f^{\perp} \cap \,^{\perp}\tau V_f \cap P^\perp \big)$.
\end{enumerate}

With the same notation as above, set $C^f:= \det \Hom(f,-)$ and observe that it is defined on the set of all $M \in \modu A$ with $\theta  \cdot \mathbf{d}_M=0$. Then, we have the following important theorem. We note that it was first proven in the quiver case by Schofield and Van den Bergh \cite{SVdB} in the characteristic zero case and independently by Derksen and Weyman \cite{DW2} in arbitrary characteristic. Then, it was extended by Derksen and Weyman in \cite{DW3} in the case of a triangular algebra, and then by Domokos \cite{Do} for general algebras, but in the characteristic zero case.

\begin{theorem}[\cite{Do}]
Let the characteristic of $k$ be zero. With the same notation as above, each $C^f$ is a semi-invariant of weight $\theta \in K_0(\proj A)$. Furthermore, $\SI(\Z)_{\theta}$ is spanned, over $k$, by all such $C^f$.
\end{theorem}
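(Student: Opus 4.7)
The plan is to split the statement into the semi-invariance property and the spanning property. The first is a direct computation, while the second reduces to the hereditary (quiver) case via a reductivity argument.

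First, I would verify that each $C^f$ is a $\theta$-semi-invariant. The hypothesis $\theta \cdot \mathbf{d}_M = 0$ forces $\dim \Hom_A(P_0, M) = \dim \Hom_A(P_{-1}, M)$, so $f_M$ is a square matrix and $\det f_M$ is well-defined as a polynomial function of $M \in \Z$. Decomposing $P_0 = \bigoplus_j P_j^{a_j}$ and $P_{-1} = \bigoplus_j P_j^{b_j}$ (so that $\theta_j = a_j - b_j$), and using the canonical identification $\Hom_A(P_j, M) \cong e_j M$, the action of $g = (g_j) \in \GL(\mathbf{d})$ on $\Hom_A(P_0, M)$ (respectively, on $\Hom_A(P_{-1}, M)$) is block-diagonal with determinant $\prod_j (\det g_j)^{a_j}$ (respectively, $\prod_j (\det g_j)^{b_j}$). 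Combined with the naturality of $f_M$ in $M$, this yields
\[
C^f(g \cdot M) = \prod_j (\det g_j)^{b_j - a_j} \, C^f(M) = \mathcal{X}_\theta(g)^{-1} \, C^f(M),
\]
which is equivalent to $g \cdot C^f = \mathcal{X}_\theta(g) \, C^f$, as required.

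For the spanning claim, I would reduce to the hereditary case. Writing $A = kQ/I$, the closed immersion $\Z \hookrightarrow \rep(A,\mathbf{d}) \hookrightarrow \rep(Q,\mathbf{d})$ gives a $\GL(\mathbf{d})$-equivariant surjection $k[\rep(Q,\mathbf{d})] \twoheadrightarrow k[\Z]$. In characteristic zero, $\GL(\mathbf{d})$ is linearly reductive, so this map of rational $\GL(\mathbf{d})$-modules splits; in particular, it remains surjective on each $\mathcal{X}_\theta$-eigenspace, i.e.\ on $\SI_\theta$. Applying the Schofield--Van den Bergh (or Derksen--Weyman) theorem for the path algebra $kQ$, every element of $\SI(\rep(Q,\mathbf{d}))_\theta$ is a $k$-linear combination of Schofield determinants $C^g$, for $kQ$-morphisms $g : Q_{-1} \to Q_0$ between projective $kQ$-modules with $[Q_0] - [Q_{-1}] = \theta$. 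To transfer this back to $A$, note that the canonical surjections $Q_i = kQ \cdot e_i \twoheadrightarrow A \cdot e_i = P_i$ induce a surjection $\Hom_{kQ}(Q_{-1}, Q_0) \twoheadrightarrow \Hom_A(P_{-1}, P_0)$, sending any such $g$ to an $f$ with matching generalized dimension data. For $M \in \Z$, viewed as a $kQ$-module via $kQ \twoheadrightarrow A$, the natural identification $\Hom_{kQ}(Q_i, M) = e_i M = \Hom_A(P_i, M)$ makes $g_M$ and $f_M$ the same square matrix; hence $C^g|_{\Z} = C^f$, producing the desired spanning.

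The main obstacle is the linear reductivity step. In positive characteristic, $\GL(\mathbf{d})$ is not linearly reductive, so the Reynolds-type splitting of $k[\rep(Q,\mathbf{d})] \twoheadrightarrow k[\Z]$ on the $\mathcal{X}_\theta$-weight space can fail, and one cannot pull spanning over $\rep(Q,\mathbf{d})$ down to $\Z$ this way. This is precisely why Domokos's theorem for general algebras is stated under the characteristic zero hypothesis, even though the hereditary case of Derksen--Weyman is valid in arbitrary characteristic.
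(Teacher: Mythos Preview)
The paper does not supply its own proof of this theorem; it is stated with attribution to \cite{Do}, and the sentence preceding it (tracing the result through \cite{SVdB}, \cite{DW2}, \cite{DW3}, and \cite{Do}) serves in lieu of an argument. There is therefore nothing in the paper to compare your proposal against.

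That said, your sketch is correct and is essentially the strategy of the cited references. The semi-invariance of $C^f$ is exactly the determinant computation you give, and the spanning statement is deduced from the hereditary case by pulling back along the $\GL(\mathbf{d})$-equivariant closed immersion $\Z \hookrightarrow \rep(Q,\mathbf{d})$: linear reductivity of $\GL(\mathbf{d})$ in characteristic zero guarantees that the surjection $k[\rep(Q,\mathbf{d})] \twoheadrightarrow k[\Z]$ restricts to a surjection on each weight space, and the Schofield--Van den Bergh / Derksen--Weyman theorem then supplies a spanning set upstairs whose restrictions are precisely the $C^f$. Your final paragraph correctly identifies why the characteristic-zero hypothesis enters.
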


The following proposition is useful in the study of semistability of modules in an irreducible component. This fundamental result explains why semistability is related to semi-invariants.

\begin{prop}[\cite{Ki}]
Let $\mathcal{Z}\in \Irr(A,\mathbf{d})$. Then, $M \in \mathcal{Z}$ is $\theta$-semistable if and only if there exist $n \in \mathbb{Z}_{\geq 0}$ and some $f \in \SI(\Z)_{n{\theta}}$ such that $f(M)\neq 0$.
\end{prop}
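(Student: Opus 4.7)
The plan is to invoke King's reformulation of $\theta$-semistability in the language of geometric invariant theory (GIT) and to apply the Hilbert--Mumford numerical criterion. Observe first that $\SI(\mathcal{Z})_{n\theta}$ is, by construction, the space of global sections of the $\GL(\mathbf{d})$-equivariant trivial line bundle on $\mathcal{Z}$ twisted by the character $\mathcal{X}_{n\theta}$. Consequently, the existence of some $n \ge 1$ and $f \in \SI(\mathcal{Z})_{n\theta}$ with $f(M) \ne 0$ is precisely the definition of GIT-semistability of $M$ relative to the linearization $\mathcal{X}_\theta$. The remaining task is therefore to match the module-theoretic definition of $\theta$-semistability with the numerical criterion.

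The bridge between the two is a standard dictionary relating one-parameter subgroups (1-PSGs) of $\GL(\mathbf{d})$ with filtrations of $M$ by $A$-submodules. A 1-PSG $\lambda: k^* \to \GL(\mathbf{d})$ induces at each vertex $i$ a weight decomposition $M_i = \bigoplus_{r\in\mathbb{Z}} M_i^{(r)}$. Setting $L^{(r)} := \bigoplus_{s \ge r} M^{(s)}$ vertex-wise, one checks that $\lim_{t\to 0}\lambda(t)\cdot M$ exists in $\rep(A,\mathbf{d})$ if and only if every $L^{(r)}$ is an $A$-submodule of $M$, in which case the limit is the associated graded $\bigoplus_r L^{(r)}/L^{(r+1)}$. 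A determinant computation gives $\mathcal{X}_\theta(\lambda(t)) = t^{\sum_r r\,\theta([M^{(r)}])}$, and summation by parts yields the key identity
$$\langle \mathcal{X}_\theta, \lambda\rangle \;=\; \sum_{r\in\mathbb{Z}} r\,\theta([M^{(r)}]) \;=\; \sum_{r\in\mathbb{Z}} \theta([L^{(r)}]).$$

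For the ($\Leftarrow$) direction, given $f \in \SI(\mathcal{Z})_{n\theta}$ with $f(M)\ne 0$, I first note that the scalar diagonal torus $(t\cdot\mathrm{id}_{M_i})_i$ acts trivially on $\rep(A,\mathbf{d})$ while scaling $f$ by $t^{n\theta\cdot\mathbf{d}}$, forcing $\theta([M])=0$. Then, for any submodule $L\subseteq M$, I pick a vector-space complement at each vertex and let $\lambda_L$ be the 1-PSG scaling $L$ by $t$ and fixing the complement; a direct unwinding shows $\lim_{t\to 0}\lambda_L(t)\cdot M = L\oplus M/L \in \overline{\mathcal{O}_M}\subseteq\mathcal{Z}$, and equivariance of $f$ via the convention $(g\cdot f)(z)=f(g^{-1}z)$ gives $f(\lambda_L(t)\cdot M) = t^{-n\theta([L])} f(M)$. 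Finiteness as $t\to 0$ with $f(M)\ne 0$ forces $\theta([L]) \le 0$, which is exactly $\theta$-semistability.

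For the ($\Rightarrow$) direction, suppose $M$ is $\theta$-semistable but $f(M)=0$ for every $f\in\SI(\mathcal{Z})_{n\theta}$ and every $n\ge 1$; then $M$ is GIT-unstable, so Hilbert--Mumford provides a 1-PSG $\lambda$ with $\lim_{t\to 0}\lambda(t)\cdot M \in \mathcal{Z}$ and $\langle\mathcal{X}_\theta,\lambda\rangle > 0$. The dictionary translates this into a submodule filtration $\{L^{(r)}\}$ of $M$ with $\sum_r \theta([L^{(r)}]) > 0$, so $\theta([L^{(r_0)}]) > 0$ for some $r_0$, contradicting semistability. The main technical burden is the tight verification of the 1-PSG/filtration dictionary in both directions, together with careful sign bookkeeping for the Mumford pairing against the chosen convention for the $\GL(\mathbf{d})$-action on functions; one must also check that the limits constructed remain in the closed $\GL(\mathbf{d})$-stable subvariety $\mathcal{Z}$, which follows from $\overline{\mathcal{O}_M}\subseteq\mathcal{Z}$.
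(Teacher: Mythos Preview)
The paper does not supply its own proof of this proposition; it is quoted directly from King's original paper \cite{Ki} and stated without argument. Your proposal reconstructs precisely King's proof: identifying the existence of a non-vanishing semi-invariant with GIT-semistability for the linearization $\mathcal{X}_\theta$, and then matching this with the module-theoretic condition via the Hilbert--Mumford criterion and the standard dictionary between one-parameter subgroups of $\GL(\mathbf{d})$ and submodule filtrations of $M$. Both directions are handled correctly, including the sign bookkeeping for the action on functions.

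One small technical caveat: the summation-by-parts identity $\sum_r r\,\theta([M^{(r)}]) = \sum_r \theta([L^{(r)}])$ as written involves infinitely many copies of $\theta([M])$ on the right (since $L^{(r)}=M$ for all sufficiently negative $r$), so it is only well defined once $\theta([M])=0$ is known. This causes no trouble in practice, since in the $(\Rightarrow)$ direction $\theta([M])=0$ is part of the hypothesis, and in the $(\Leftarrow)$ direction you do not invoke the identity but instead work directly with the two-step filtration $0\subseteq L\subseteq M$; still, it would be cleaner to state the identity as holding modulo $\theta([M])$, or to restrict the sum on the right to those $r$ with $L^{(r)}\subsetneq M$.
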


The following remark establishes a connection between semi-invariants and (generalized) $\tau$-perpendicular categories. Indeed, notice that in the remark below, if $\mathcal{Z}_\theta$ has $c(\mathcal{Z}_\theta)=0$, then $V_f$ is $\tau$-rigid.

\begin{remark}
Let $\theta =[P_0]-[P_{-1}] \in K_0(\proj A)$, with the generic decomposition $(R\rightarrow 0)\oplus (P'_{-1}\rightarrow P_0)$, where $(R\rightarrow 0)$ is the maximal shifted projective summand. For $f \in \Hom(P_{-1},P_0)$ in general position, we have $C^f(M)\neq 0$ if and only if $\Hom(V_f,M)=\Hom(M,\tau V_f)=\Hom(R,M)=0$, where $V_f$ is in general position of the $\tau$-regular component $\Z_{\theta}$. 
In other words, $M$ is $\theta$-semistable if and only if there exists $f$ in $\Hom(P_{-1},P_0)$ for which $M \in \big( R^{\perp} \cap V_f^{\perp} \cap \,^{\perp}\tau V_f \big)$, with $V_f$ in the general position of the $\tau$-regular component associated to $\theta$ (or formally, to $\theta - [R]$).
\end{remark}

\begin{definition}[\cite{Ki}]
Let $\mathcal{Z}\in \Irr(A,\mathbf{d})$ and $\theta \in K_0(\proj A)$. Then, define the projective variety
$$\mathcal{M}^{\theta-ss}(\mathcal{Z}):=\proj \Big( \bigoplus_{n\in \mathbb{Z}_{\geq 0}}\SI(\Z)_{n\theta} \Big),$$ which is called the \emph{moduli space of $\theta$-semistable representations}.
\end{definition}

The following proposition describes some fo the fundamental properties of this projective variety.

\begin{prop}[\cite{Ki}]
Points in $\mathcal{M}^{\theta-ss}(\mathcal{Z})$ are the $\theta$-semistable representations in $\mathcal{Z}$, considered up to the following equivalent relation: $M\sim_{ss} N$ if and only if $M$ and $N$ have the same $\theta$-stable composition factors.
\end{prop}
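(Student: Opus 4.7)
The plan is to invoke the general machinery of geometric invariant theory (GIT) specialized to this setting, and then identify the equivalence relation coming from S-equivalence with the Jordan--Hölder equivalence internal to the wide subcategory $\mathcal{W}_\theta$ of $\theta$-semistable representations.

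First, I would set $\mathcal{Z}^{\theta\text{-ss}}$ to be the open subvariety of $\mathcal{Z}$ consisting of $\theta$-semistable representations; by the preceding proposition, this coincides with the non-vanishing locus $\{M \in \mathcal{Z} : f(M) \neq 0 \text{ for some } f \in \SI(\mathcal{Z})_{n\theta},\, n \geq 1\}$, which is precisely the semistable locus for the $\GL(\mathbf{d})$-action linearized by the character $\mathcal{X}_\theta$. By standard GIT (Mumford's construction), the scheme $\mathcal{M}^{\theta\text{-ss}}(\mathcal{Z}) = \proj\bigl(\bigoplus_n \SI(\mathcal{Z})_{n\theta}\bigr)$ is the good quotient $\mathcal{Z}^{\theta\text{-ss}} /\!/ \GL(\mathbf{d})$. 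Its closed points are in bijection with the set of $\GL(\mathbf{d})$-orbits in $\mathcal{Z}^{\theta\text{-ss}}$ modulo the equivalence $M \sim_{ss} N$ iff $\overline{\mathcal{O}_M} \cap \overline{\mathcal{O}_N} \cap \mathcal{Z}^{\theta\text{-ss}} \neq \emptyset$ (S-equivalence). Since the center of $\GL(\mathbf{d})$ acts trivially, it is harmless to work modulo scalars.

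Next I would identify S-equivalence with having the same $\theta$-stable composition factors. The key input is that $\mathcal{W}_\theta$ is a wide (in particular abelian) subcategory of $\modu A$ in which the simple objects are exactly the $\theta$-stable modules. Any $M \in \mathcal{W}_\theta$ therefore admits a Jordan--Hölder filtration inside $\mathcal{W}_\theta$ with $\theta$-stable factors $S_1, \ldots, S_r$, and by the Jordan--Hölder theorem these factors are unique up to reordering. The crucial geometric step is to show that the polystable representation $\mathrm{gr}(M) := \bigoplus_i S_i$ lies in $\overline{\mathcal{O}_M}$: one realizes each step of the filtration $0 \subset M_1 \subset \cdots \subset M_r = M$ as a degeneration, explicitly deforming $M$ via a one-parameter subgroup of $\GL(\mathbf{d})$ adapted to the filtration (the Rees-type degeneration $\lim_{t \to 0} \lambda(t) \cdot M$ for a suitable $\lambda: \mathbb{G}_m \to \GL(\mathbf{d})$), and observing that the limit remains in $\mathcal{Z}$ (since $\mathcal{Z}$ is closed and invariant) and in $\mathcal{Z}^{\theta\text{-ss}}$ (since semistability is preserved under such degenerations, as each sub- and quotient-module of $M$ inside $\mathcal{W}_\theta$ is again semistable of weight zero). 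It then remains to check that $\mathrm{gr}(M)$ has a closed orbit in $\mathcal{Z}^{\theta\text{-ss}}$ --- this follows because any degeneration of a polystable module must again contain the same stable factors, forcing the orbit to be closed by a dimension/semicontinuity argument.

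Combining these two reductions, S-equivalence of $M$ and $N$ becomes equivalent to $\mathrm{gr}(M) \cong \mathrm{gr}(N)$, which is equivalent to $M$ and $N$ having the same multiset of $\theta$-stable composition factors. The main obstacle I expect is the geometric step of constructing the one-parameter subgroup that degenerates $M$ to $\mathrm{gr}(M)$ while remaining inside the component $\mathcal{Z}$ of the representation variety of $A$ (as opposed to $\rep(Q, \mathbf{d})$): one has to check that the relations defining $I$ are preserved along the degeneration, which is automatic because $\mathcal{Z}$ is $\GL(\mathbf{d})$-stable and closed, so $\lim_{t \to 0} \lambda(t) \cdot M$ is forced to lie in $\mathcal{Z}$. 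Up to verifying this and the closedness of polystable orbits, the theorem is a direct translation of King's original result in \cite{Ki} to the component $\mathcal{Z}$.
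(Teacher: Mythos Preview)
The paper does not give its own proof of this proposition: it is stated as a result of King \cite{Ki} and is simply cited. Your sketch is a faithful reconstruction of King's original argument (GIT good quotient, S-equivalence via orbit closures, Jordan--H\"older inside the abelian category $\mathcal{W}_\theta$, degeneration to the associated graded via a one-parameter subgroup, closedness of polystable orbits), so there is no divergence to discuss beyond noting that the paper outsources the proof entirely to \cite{Ki} while you have unpacked it.
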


\begin{remark}
\begin{enumerate}
    \item Let $\mathcal{M}^{\theta-s}(\mathcal{Z})$ denote the set of all $\theta$-stable points in $\mathcal{M}^{\theta-ss}(\mathcal{Z})$. Then, $\mathcal{M}^{\theta-s}(\mathcal{Z})$ is an open (possibly empty) subset of $\mathcal{M}^{\theta-ss}(\mathcal{Z})$.

    \item For all $\mathcal{Z}$ and all $\theta$, we have that $\mathcal{M}^{\theta-ss} (\mathcal{Z})$ are zero-dimensional if and only if the dimension of $\SI(\Z)_{\theta}$ is $0$ or $1$. This is the case if and only if, for every $\theta$, all $\theta$-stable representations have open orbits.
\end{enumerate}
\end{remark}

We remark that the following notion first appeared in \cite{CKW} and the authors treated it under the name of \emph{multiplicity-free} algebras. However, below we propose a new terminology and call such algebras \emph{stably-discrete}, as we believe it is more enlightening in our context. More specifically, we observe that this geometric notion implies that an algebra which satisfies the condition of the following definition has no infinite family consisting of stable modules in any of its irreducible components, that is, for each $\mathcal{Z} \in \Irr(A)$, there is at most one stable module (also compare with Definition \ref{Def: brick-discrete} and Remark \ref{Rem: brick-discrete and stably-discrete}).

\begin{definition}
An algebra $A$ is said to be \emph{stably-discrete} if, for each irreducible component $\mathcal{Z} \in \Irr(A)$ and every $\theta \in K_0(\proj A)$, the moduli space $\mathcal{M}^{\theta-ss}(\mathcal{Z})$ is zero-dimensional.
\end{definition}

Following \cite{MP3}, we also recall the following definition.

\begin{definition}\label{Def: brick-discrete}
An algebra $A$ is called \emph{brick-discrete} if every brick in $\brick(A)$ has open orbit.
\end{definition}

We note that the above terminology was inspired by the geometric properties of bricks. In particular, $A$ is brick-discrete if and only if for each $\mathcal{Z} \in \Irr(A)$, there is at most one (orbit) of bricks $\mathcal{Z}$. 
We further observe that, in light of the 2nd bBT conjecture and the generic-brick conjecture posed by the authors, the notion of brick-discrete algebras is expected to also coincide with the discreteness of the spectrum associated to arbitrary finite dimensional algebras, as studied by Ringel in \cite{Ri4}. More specifically, for any algebra $A$, each $B \in \brick(A)$ can be seen as a ring empimorphism $\delta_B: A\rightarrow \Mat_d(k)$, where $\dim_k(B)=d$. The set of such ring epimorphisms $\delta: A\rightarrow \Mat_d(D)$ from $A$ to simple Artinian rings generalize the notion of spectrum from commutative algebras to arbitrary (finite dimensional) algebras. Consequently, it provides another perspective to the study of bricks, particularly in connection with the decisive role of generic bricks. For more details on this perspective and some new results on generic bricks over tame algebras, we refer to \cite{MP1}, \cite{BPS}, and references therein.

\begin{remark}\label{Rem: brick-discrete and stably-discrete}
To put the above definitions in contrast, we note that
    \begin{enumerate}
    \item Every brick-finite algebra is brick-discrete. Moreover, the converse is equivalent to the assertion of the Second brick-Brauer-Thrall conjecture (see Conjecture \ref{2nd bBT Conj.}). More precisely, the $2nd$ bBT conjecture states that an algebra $A$ is brick-discrete if and only if $A$ is brick-finite.
    \item If $A$ is brick-discrete, then $A$ is stably-discrete. Moreover, as explained below, the converse is expected to be true (see Theorem \ref{Thm: CKW on tame} and Conjecture \ref{CKW Conj.}).
    \item If $A$ is not brick-discrete, then it follows from \cite[Proposition 3.4]{MP3}, which heavily uses the results from \cite{GLFS}, that there exists and infinite semibrick consisting of bricks of the same dimension. In particular, in this case, $A$ satisfies a stronger version of the semibrick conjecture, originally posed in \cite{En} (for details, see \cite[Remark 3.5]{MP3}).
    \end{enumerate}
\end{remark}

For tame algebras, the following theorem establishes further connections between the new notions introduced above.  The proof of this theorem, which originally appeared in \cite{CKW}, provides further insight into the problems that we consider below. Hence, we present a proof of this interesting result.

\begin{theorem}[\cite{CKW}]\label{Thm: CKW on tame}
Let $A$ be a tame algebra. Then, $A$ is stably-discrete if and only if $A$ is brick-discrete.
\end{theorem}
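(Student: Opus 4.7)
The plan is to handle the two implications separately. The implication ``brick-discrete $\Rightarrow$ stably-discrete'' in fact holds without any tameness assumption, while the reverse implication is where the tameness hypothesis becomes essential.

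\emph{The easy direction.} Assume $A$ is brick-discrete. Fix any irreducible component $\mathcal{Z} \in \Irr(A,\mathbf{d})$ and any $\theta \in K_0(\proj A)$. By part (3) of King's proposition, every $\theta$-stable module is a brick, so by our assumption every $\theta$-stable module in $\mathcal{Z}$ has an open orbit. Invoking part (2) of the remark following the definition of $\mathcal{M}^{\theta\text{-ss}}(\mathcal{Z})$, this is equivalent to $\mathcal{M}^{\theta\text{-ss}}(\mathcal{Z})$ being zero-dimensional. As $\mathcal{Z}$ and $\theta$ were arbitrary, $A$ is stably-discrete.

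\emph{The hard direction.} Assume $A$ is tame and stably-discrete; I shall argue by contradiction. Suppose there exists $B \in \brick(A)$ whose orbit $\mathcal{O}_B$ is not open, and let $\mathcal{Z} \in \Irr(A,\mathbf{d}_B)$ be an irreducible component containing $B$. Since every $\GL(\mathbf{d})$-orbit is open in its own closure, $\mathcal{Z} = \overline{\mathcal{O}}_B$ would force $\mathcal{O}_B$ to be open in $\mathcal{Z}$. Thus, by Proposition \ref{Prop: orbit closure of bricks}, $\mathcal{Z}$ must be the closure of an infinite union of brick orbits, so $\mathcal{Z}$ contains infinitely many non-isomorphic bricks of dimension vector $\mathbf{d}_B$.

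Now tameness enters. By Crawley-Boevey's theorem, for the fixed $\mathbf{d}_B$ the indecomposables decompose into finitely many $1$-parameter families plus finitely many isolated orbits. Since being a brick is an open condition (by upper semicontinuity of $\dim\End_A(-)$), the infinitely many brick orbits in $\mathcal{Z}$ yield a non-empty open dense subset $\mathcal{U}_{\mathrm{br}} \subseteq \mathcal{Z}$ consisting of bricks, and their images in the orbit space parametrize a positive-dimensional family $\mathcal{Y}$ of brick orbits in $\mathcal{Z}$ coming from (at least) one $1$-parameter family. The crucial last step is to produce a $g$-vector $\theta$ that renders a generic member of $\mathcal{Y}$ stable. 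By King's criterion, a brick $B' \in \mathcal{Y}$ is $\theta$-stable precisely when $\theta \cdot \mathbf{d}_{B'} = 0$ and $\theta \cdot \mathbf{d}_N < 0$ for every nonzero proper submodule $N \subset B'$. On a suitable open subset of $\mathcal{Y}$, the finite set of possible dimension vectors of proper nonzero submodules is constant by semicontinuity, so a single linear functional $\theta$ simultaneously witnesses $\theta$-stability for all members of this open subset. Since these members are pairwise non-isomorphic $\theta$-stable bricks, they give distinct points in $\mathcal{M}^{\theta\text{-ss}}(\mathcal{Z})$, making this moduli space positive-dimensional and contradicting the stably-discrete assumption.

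The principal obstacle is precisely this last step: constructing a $\theta$ that uniformly witnesses $\theta$-stability along a positive-dimensional subfamily of bricks. While the existence of such an infinite family of bricks follows cleanly from Proposition \ref{Prop: orbit closure of bricks} combined with tameness, making a single $\theta$ work simultaneously requires the semicontinuity of submodule dimension vectors in a $1$-parameter family, together with a careful application of King's numerical criterion to a generic member. Tameness is essential both for the existence of the $1$-parameter family $\mathcal{Y}$ and for providing enough regularity in that family to choose a uniform stability parameter.
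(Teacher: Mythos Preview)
Your easy direction is fine and matches what the paper takes for granted. The gap is in the hard direction, precisely at the point you yourself flag as the ``principal obstacle'': producing a weight $\theta$ that makes a generic brick in the family $\theta$-stable. You argue that the set of dimension vectors of proper submodules is generically constant by semicontinuity, and then assert that ``a single linear functional $\theta$ simultaneously witnesses $\theta$-stability''. But the second claim does not follow from the first. Knowing the (finite) set $S$ of submodule dimension vectors does not guarantee the existence of $\theta$ with $\theta \cdot \mathbf{d}_B = 0$ and $\theta \cdot \mathbf{e} < 0$ for all $\mathbf{e} \in S$; for instance, if $S$ contains two vectors summing to $\mathbf{d}_B$, no such $\theta$ exists. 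In general, a brick need not be $\theta$-stable for any $\theta$, so this step genuinely requires an idea you have not supplied.

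The paper resolves this by exploiting tameness more sharply than just the $1$-parameter structure. By Crawley-Boevey's theorem, a generic brick $B$ in the family is $\tau$-homogeneous, i.e.\ $\tau B \cong B$. This forces $h(\mathcal{Z}) = 1 = c(\mathcal{Z})$, so $\mathcal{Z}$ is $\tau$-regular, and Plamondon's theorem supplies its $g$-vector $\theta_{\mathcal{Z}}$, which can be written as $\theta_{\mathcal{Z}}(-) = \dim\Hom_A(B,-) - \dim\Hom_A(-,\tau B)$. Now stability is checked directly: $\theta_{\mathcal{Z}}([B]) = 0$, and for any nonzero proper submodule $L \subset B$ one has $\Hom_A(B,L) = 0$ (since $B$ is a brick) while $\Hom_A(L,\tau B) = \Hom_A(L,B) \neq 0$ (using $\tau B \cong B$ and the inclusion $L \hookrightarrow B$), whence $\theta_{\mathcal{Z}}([L]) < 0$. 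The $\tau$-homogeneity is what makes the explicit weight work; your semicontinuity argument does not substitute for it.
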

\begin{proof}
We only need to show that if $A$ is stably-discrete, then $A$ is brick-discrete. If we assume otherwise, there exists an irreducible component $\Z \in \Irr(A)$ with $\Z=\overline{\mathcal{U}}$, where $\mathcal{U}$ is an open subset of $\mathcal{Z}$ which consists of an infinite union of orbits of bricks.

Since $A$ is tame, then $c(\mathcal{Z})=1$. Moreover, for some nonempty open subset $\mathcal{O} \subseteq \mathcal{U}$, and each $B \in \mathcal{O}$, we have $\tau B\simeq B$. This implies that $h(\Z)=1$, and therefore $\Z$ is a $\tau$-regular component.
Hence, by Theorem \ref{Plamondon's result}, there exists a $g$-vector $\theta_{\Z} \in K_0(\proj A)$ corresponding to $\Z$. We claim that elements of $\mathcal{O}$ are $\theta_{\Z}$-stable, and from that we conclude the desired contradiction.

To show the aforementioned claim, first observe that for any choice of $B$ in $\mathcal{O}$, due to the description of $\theta_{\Z} \in K_0(\proj A)$, we have $\theta_{\Z}(-)=\dim \Hom_A(B,-)-\dim \Hom_A(-,\tau B)$. This evidently implies $\theta_{\Z}([B])=0$. Moreover, if $L$ is a nonzero proper submodule of $B$, then $\Hom_A(L,\tau B)=\Hom_A(L,B)\neq 0$, therefore $\dim \Hom_A(L,\tau B)\neq 0$. On the other hand, since $B$ is a brick, we have $\Hom_A(B,L)=0$. This implies $\theta_{\Z}([L])<0$, and therefore each $B$ in $\mathcal{O}$ is in fact $\theta_{\Z}$-stable. However, these $\theta_{\Z}$-stable modules do not have open orbits, which contradicts the assumption that $A$ is stably-discrete.
\end{proof}

As articulated below, in \cite{CKW} the authors conjectured that the statement of the above theorem holds for arbitrary algebras. Meanwhile, let us remark that the following conjecture was originally phrased in a slightly different (but equivalent) language. For more details on the new results on this conjecture, as well as the connections between this problem and some other open conjectures, we refer to \cite{MP3} and references therein (also see Theorem \ref{Thm: equivalences for tame algebras}).

\begin{conjecture}\cite{CKW}\label{CKW Conj.}
An algebra $A$ is stably-discrete if and only if $A$ is brick-discrete.
\end{conjecture}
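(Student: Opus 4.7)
The forward direction (brick-discrete $\Rightarrow$ stably-discrete) is the easier one and I would dispatch it first. Every $\theta$-stable module is a brick, so if every brick has an open orbit then every $\theta$-stable module has an open orbit; part (2) of the remark following the definition of $\mathcal{M}^{\theta-ss}(\mathcal{Z})$ then forces $\dim \mathcal{M}^{\theta-ss}(\mathcal{Z}) = 0$ for every $\mathcal{Z} \in \Irr(A)$ and every $\theta \in K_0(\proj A)$.

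For the converse, the plan is to argue by contraposition, modelled on the proof of Theorem \ref{Thm: CKW on tame}. Suppose $A$ is not brick-discrete. Proposition \ref{Prop: orbit closure of bricks} yields some $\mathcal{Z} \in \Irr(A)$ containing a nonempty open subset $\mathcal{U}$ that is an infinite union of orbits of bricks. Shrink $\mathcal{U}$ to an open subset on which the minimal projective presentation, the dimension vector $\mathbf{d}$, the $g$-vector, and (up to isomorphism) $\tau B$ are all constant across the brick family. The objective is to exhibit a single $g$-vector $\theta$ together with an infinite family of pairwise non-$\sim_{ss}$-equivalent $\theta$-stable bricks, producing $\dim \mathcal{M}^{\theta-ss}(\mathcal{Z}) \geq 1$ and contradicting stably-discreteness.

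The natural candidate, following the tame proof as far as possible, is to arrange (by further shrinking) that $\mathcal{U}$ lies inside a $\tau$-regular component, and to take for $\theta$ the associated sincere $g$-vector supplied by Theorem \ref{Plamondon's result}. Using the identity $\theta([-]) = \dim\Hom(V_f,-) - \dim\Hom(-,\tau V_f)$ for $V_f$ generic in that component, the orthogonality $\theta([B]) = 0$ holds on a dense open subset; and the brick property together with the absence of nonzero maps $B \to L$ (any such would compose with $L \hookrightarrow B$ to give a non-isomorphism endomorphism of $B$) gives $\Hom(B,L) = 0$ for every proper nonzero submodule $L \subsetneq B$. Stability of the $B \in \mathcal{U}$ then reduces to the single inequality $\Hom(L, \tau B) \neq 0$ for all such $L$.

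Precisely this last step is where I expect the main obstacle to sit. In the tame case $\tau B \simeq B$ reduces it to $\Hom(L, B) \neq 0$, automatic from $L \hookrightarrow B$; but when $c(\mathcal{Z}) \geq 2$ there is no such self-duality and the required nonvanishing can genuinely fail. My fallback is twofold. First, I would attempt to reduce via \cite[Theorem 1.4]{MP4} to the minimal brick-infinite setting, where almost all bricks are faithful and the submodule lattice of a generic brick is far more constrained, and then exploit the $\tau$-convergence construction of \cite[Sections 6 and 8]{MP2} to manufacture a one-parameter subfamily along which the Auslander--Reiten behaviour can be tracked explicitly. Second, I would avoid committing to a single $\theta$ in advance: assign to each brick $B$ in the family its own canonical $g$-vector $\theta_B$ (coming from its minimal projective presentation), note that these lie in a bounded region of $K_0(\proj A)$ since the dimension vector is fixed, extract by pigeonhole an infinite subfamily sharing a common sincere $g$-vector $\theta_0$, and argue inside the $\tau$-regular component determined by $\theta_0$ via Theorem \ref{Plamondon's result}. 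Either route requires substantial new input, consistent with Conjecture \ref{CKW Conj.} remaining open outside the tame case.
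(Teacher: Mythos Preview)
This statement is presented in the paper as an \emph{open conjecture}, not a theorem; there is no proof in the paper to compare against. The paper explicitly records only the easy implication (brick-discrete $\Rightarrow$ stably-discrete) as known, in Remark \ref{Rem: brick-discrete and stably-discrete}(2), and proves the full equivalence only under the additional hypothesis that $A$ is tame (Theorem \ref{Thm: CKW on tame}).

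Your treatment reflects this accurately. The argument you give for brick-discrete $\Rightarrow$ stably-discrete is correct and is essentially the reasoning behind the paper's Remark \ref{Rem: brick-discrete and stably-discrete}(2): $\theta$-stable modules are bricks, so if all bricks have open orbit then so do all $\theta$-stable modules, forcing the moduli spaces to be zero-dimensional. (A minor quibble: you call this the ``forward'' direction, but as the conjecture is phrased---stably-discrete $\Leftrightarrow$ brick-discrete---it is the backward one.)

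For the open direction you do not claim a proof, and rightly so. Your diagnosis of where the tame argument breaks is exactly the point: the inequality $\theta_{\mathcal{Z}}([L]) < 0$ hinges on $\Hom(L,\tau B) \neq 0$, which in the tame case is automatic because Crawley-Boevey gives $\tau B \simeq B$ generically, whence $L \hookrightarrow B \simeq \tau B$. Once $c(\mathcal{Z}) \geq 2$ there is no reason for a generic brick to be $\tau$-periodic, and the needed nonvanishing is genuinely unavailable in general. Your two fallback sketches (reduction to minimal brick-infinite algebras via \cite{MP4}, and pigeonholing $g$-vectors to land in a common $\tau$-regular component) are reasonable heuristics but, as you yourself note, neither closes the gap; the paper likewise leaves the conjecture open and situates it among the unresolved implications in Figure \ref{Fig: Known and unknown implications}.
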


\section{Brick-finiteness and brick-discreteness}

Here we recall some of the more recent notions of finiteness and tameness that have been introduced in the past few years and are motivated by different phenomena in the study of representation theory of algebras. Then, we treat these notions through the lens of some open conjectures, particularly from the viewpoint of the behavior of bricks in the algebraic, geometric and homological settings.

\medskip

We first recall the required terminology and notation. As before, $A$ denotes an algebra of rank $n$, and a basic $\tau$-rigid pair $(M,P)$ consists of a basic $\tau$-rigid module $M$, and a basic projective module $P$ in $\modu A$ such that $\Hom_A(P,M)=0$.  
For any such basic $\tau$-rigid pair, it is known that the $g$-vectors of the indecomposable summands of $M$ and $P$ are linearly independent in $K_0(\rm proj A)_{\mathbb{R}} = \mathbb{R}^n$. In particular, these $g$-vectors specify the rays of a polyhedral cone $C(M, P)$ in $\mathbb{R}^n$ which is associated to $(M,P)$. 
We remark that $C(M, P)$ is a cone of maximal dimension if and only if $(M,P)$ is a support $\tau$-tilting pair, namely, $|M|+|P|=n$. 
Below, we recall an important theorem which uses some fundamental facts in $\tau$-tilting theory to associate a fan to the algebra under consideration. Due to the conceptual connection between the construction of this fan and $\tau$-tilting theory, henceforth, we call the fan described in the theorem as \emph{$\tau$-tilting fan} of $A$ and denote it by $\mathcal{F}_A$. 
For more details on the construction of this fan and its properties, we refer to \cite{AIR} and \cite{DIJ}, and references therein.

\medskip

\begin{remark}
The fan described in the following theorem is often viewed as a conceptual counterpart of the $g$-vector fans of cluster algebras. Consequently, in the literature this fan is also studied under the name ``$g$-vector fan" of $A$, and sometimes ``silting fan". However, we prefer to simply call this fan the \emph{(support) $\tau$-tilting fan} of $A$. In doing so, we have two conceptual observations: On the one hand, the term ``$\tau$-tilting fan" better indicates the connections between the construction of the fan and the $\tau$-tilting theory of the algebra under consideration. On the other hand, and even more importantly, we emphasize that every $\tau$-tilting infinite algebra $A$ has some element of $K_0(\rm proj A)_{\mathbb{R}}$ (potentially even in $K_0(\rm proj A)$) which do not belong to the $\tau$-tilting fan $\mathcal{F}$; see Proposition \ref{Prop: g-finite equ. brick-finite}.
\end{remark}

\begin{theorem}[\cite{AIR}] \label{Thm: tau-fan}
For an algebra $A$ of rank $n$, the rays
of the fan $\mathcal{F}_A$ in $\mathbb{R}^n$ are the $g$-vectors of the indecomposable $\tau$-rigid modules in $\modu A$ together with the $g$-vectors of the shifts of indecomposable projectives. In particular, the rays of each maximal cone of $\mathcal{F}_A$ are given by the $g$-vectors of the indecomposable summands of a given support $\tau$-tilting pair $(M,P)$. Moreover, the fan $\mathcal{F}_A$ is essential, rational, polyhedral, and simplicial.
\end{theorem}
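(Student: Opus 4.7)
The plan is to follow the approach of Adachi--Iyama--Reiten and construct $\mathcal{F}_A$ as the collection of cones $C(M,P)$ attached to basic $\tau$-rigid pairs, then verify each of the claimed properties in turn. For each basic $\tau$-rigid pair $(M,P)$, with indecomposable decompositions $M = \bigoplus_i M_i$ and $P = \bigoplus_j P_j$, define
\[
C(M,P) := \sum_i \mathbb{R}_{\geq 0}\, g(M_i) \; + \; \sum_j \mathbb{R}_{\geq 0}\, g(P_j[1]) \;\subseteq\; K_0(\proj A)_{\mathbb R}.
\]
The crucial preliminary fact, which the excerpt already invokes, is that the $g$-vectors $\{g(M_i)\} \cup \{g(P_j[1])\}$ are linearly independent in $\mathbb R^n$. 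I would prove this by interpreting $(M,P)$ through its minimal $2$-term projective presentation: $(M,P)$ corresponds to a $2$-term presilting complex in $K^b(\proj A)$, and the classes of indecomposable summands of any presilting object are linearly independent in $K_0(K^b(\proj A)) \cong K_0(\proj A)$ by a standard Hom-vanishing argument.

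Next I would check that these cones (with their faces) constitute a fan. Every face of $C(M,P)$ is obtained by discarding some indecomposable summands of $(M,P)$, so it is again of the form $C(M',P')$ for a sub-pair. The substantive step is verifying that $C(M,P) \cap C(M',P')$ is a common face; for this I would invoke the AIR mutation theory. Concretely, any almost-complete support $\tau$-rigid pair admits exactly two completions to a support $\tau$-tilting pair (left and right mutation), and these two completions share a codimension-one face whose ray set is the common sub-pair. Propagating this local compatibility through the mutation graph shows that any two maximal cones meet along a common face, and hence the same holds for arbitrary cones.

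For the maximal-cone statement, I would combine linear independence with Skowronski's bound $|M|+|P| \leq n$ recalled earlier: a cone $C(M,P)$ has dimension $|M|+|P|$, and it is of maximal dimension $n$ precisely when $(M,P)$ is support $\tau$-tilting. Together with the compatibility of neighboring cones via mutation, this gives that the rays of each maximal cone are exactly the $g$-vectors of indecomposable summands of some support $\tau$-tilting pair, and conversely every such pair contributes a maximal cone.

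Finally, for the qualitative adjectives: rationality is immediate because $g$-vectors lie in $K_0(\proj A) \cong \mathbb Z^n$; polyhedrality holds since each cone is by definition the nonnegative hull of finitely many vectors; simpliciality is precisely the linear-independence statement established in the first step (each cone has as many ray-generators as its dimension); and essentiality (i.e.\ $\mathcal{F}_A$ contains no nonzero linear subspace) follows because $\{0\}$ is itself the cone attached to the zero pair and is a common face of every $C(M,P)$. The main obstacle I expect is the fan-compatibility in the second paragraph: showing that the intersection of two maximal cones is a common face is not formal and genuinely relies on the full mutation theory of \cite{AIR}, in particular on the fact that mutation of support $\tau$-tilting pairs induces the expected wall-crossing in $K_0(\proj A)_{\mathbb R}$.
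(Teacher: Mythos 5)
The paper does not prove this theorem; it is stated as a citation to \cite{AIR}, so there is no internal argument to compare your sketch against. Your reconstruction does follow the right contours of the Adachi--Iyama--Reiten construction: cones $C(M,P)$ from basic $\tau$-rigid pairs, linear independence of $g$-vectors of indecomposable summands via the correspondence with $2$-term presilting complexes, and dimension counting via Skowro\'nski's bound to identify the maximal cones with support $\tau$-tilting pairs. The adjectives are handled correctly as well (and your observation that $\{0\}$ is a common face correctly gives strong convexity of every cone, hence essentiality).

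The one place your argument as written would not go through is the fan-compatibility step. Knowing that the two completions of an almost-complete support $\tau$-rigid pair share a codimension-one facet, and propagating this along the mutation graph, does not by itself preclude two non-adjacent maximal cones from overlapping with nonempty interior intersection that is not a common face; local wall-crossing data cannot rule out such global collisions. The argument in the literature establishes instead that distinct maximal cones have \emph{disjoint interiors}, which is the genuinely hard point: one shows that any $\theta$ in the relative interior of $C(M,P)$ determines $(M,P)$ uniquely, typically by recovering the associated torsion pair (or $\theta$-semistable subcategory) from $\theta$ alone. Once interiors are disjoint, the fan property for a finite-or-countable collection of simplicial cones sharing faces whenever they overlap follows from the mutation/facet analysis you describe. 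So your instinct that this is the crux is correct, but the mutation graph is a consequence of the wall structure rather than the tool that proves it; the missing ingredient is the uniqueness of the support $\tau$-tilting pair attached to an interior $g$-vector.
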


When there is no risk of confusion, by $\mathcal{F}_A$, or simply $\mathcal{F}$  we denote the $\tau$-tilting fan of $A$ given in the above theorem. Unless specified otherwise, we always consider $\mathcal{F}_A$ in $K_0(\proj(A))_\mathbb{R}$, which is further identified with $\mathbb{R}^n$. Observe that $\mathcal{F}_A$ can be also regarded as a fan in $\mathbb{Q}^n$. However, in the latter case, we explicitly specify the ambient space.
Before we state an important connection between the behavior of the $\tau$-tilting fan and our study of bricks, let us recall that a fan $\mathcal{G}$ in $\mathbb{R}^n$ is said to be \emph{complete} if each element of $\mathbb{R}^n$ lies in some cone of $\mathcal{G}$. This motivates the following definition.

\begin{definition}
An algebra $A$ is called \emph{$g$-finite} if the $\tau$-tilting fan $\mathcal{F}_A$ is complete in $\mathbb{R}^n$.
\end{definition}

The next proposition shows the connection between the more geometric notion of $g$-finiteness and the algebraic notion of brick-finiteness.

\begin{prop}[\cite{DIJ}, \cite{As2}]\label{Prop: g-finite equ. brick-finite}
An algebra $A$ is $g$-finite if and only if it is brick-finite.
\end{prop}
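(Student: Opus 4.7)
The plan is to leverage the bijection between the maximal cones of $\mathcal{F}_A$ and basic support $\tau$-tilting pairs $(M,P)$ in $\modu A$, as given by Theorem \ref{Thm: tau-fan}, together with Proposition \ref{Prop: tau-tilting-finiteness}, which identifies brick-finiteness with $\tau$-tilting finiteness. Under these identifications, the number of maximal cones of $\mathcal{F}_A$ equals the number of basic support $\tau$-tilting pairs up to isomorphism, so the problem reduces to showing that $\mathcal{F}_A$ is complete if and only if it has only finitely many maximal cones.

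For the easier direction $(\Leftarrow)$, assume $A$ is brick-finite, hence $\tau$-tilting finite by Proposition \ref{Prop: tau-tilting-finiteness}, so $\mathcal{F}_A$ has only finitely many maximal cones. The crucial input is the mutation theorem of \cite{AIR}: for any maximal cone $C(M,P)$ and any wall of it, there is a unique adjacent maximal cone on the opposite side of that wall, obtained by left or right mutation of $(M,P)$ at the summand corresponding to the wall. Thus every wall of $\mathcal{F}_A$ is shared between exactly two maximal cones. Since $\mathcal{F}_A$ is then a finite simplicial rational polyhedral fan whose $(n-1)$-dimensional boundary strata are all internal, its support $|\mathcal{F}_A|$ is a closed subset of $\mathbb{R}^n$ with non-empty interior and with no $(n-1)$-dimensional topological boundary; consequently $|\mathcal{F}_A| = \mathbb{R}^n$, so $A$ is $g$-finite.

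For the harder direction $(\Rightarrow)$, I would argue by contrapositive: assume $A$ is brick-infinite, hence $\tau$-tilting infinite, and aim to produce some $\theta \in K_0(\proj A)_{\mathbb{R}}$ not lying in $|\mathcal{F}_A|$. The cleanest route is via Asai's TF-equivalence framework of \cite{As2}, under which the relative interiors of the cones of $\mathcal{F}_A$ are precisely the TF-equivalence classes whose $\theta$-semistable subcategory is generated by a finite semibrick consisting of $\tau$-rigid bricks. In the $\tau$-tilting infinite case, one exhibits directions $\theta$ whose stable subcategory contains a brick that is not $\tau$-rigid (equivalently, whose corresponding torsion class is not functorially finite, which is available by Proposition \ref{Prop: tau-tilting-finiteness}); such a $\theta$ cannot belong to the relative interior of any cone of $\mathcal{F}_A$, and a limiting/closure argument shows it is not on the boundary of any cone either. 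Alternatively, one can chase an infinite mutation sequence of support $\tau$-tilting pairs to extract a ``limit'' $g$-vector that fails to lie in any single cone.

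The main obstacle is the reverse implication, and within it the step of certifying that a concrete $\theta \in K_0(\proj A)_{\mathbb{R}}$ lies outside $|\mathcal{F}_A|$. A naive accumulation argument on maximal cones is insufficient, since one must rule out the possibility that infinitely many cones accumulate in a manner which still covers $\mathbb{R}^n$; Asai's homological description of $|\mathcal{F}_A|$ in terms of semistability seems to be the correct tool, as it characterizes membership in $|\mathcal{F}_A|$ precisely via the existence of an appropriate $\tau$-rigid generator for the semistable subcategory, whose failure is forced by $\tau$-tilting infiniteness.
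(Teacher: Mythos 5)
The paper does not prove this proposition; it is cited to \cite{DIJ} and \cite{As2}, so your proposal can only be assessed on its own terms.

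Your $(\Leftarrow)$ direction is correct and essentially matches the standard argument: brick-finiteness gives finitely many maximal cones via Proposition \ref{Prop: tau-tilting-finiteness} and Theorem \ref{Thm: tau-fan}, the AIR mutation theorem (every basic almost complete support $\tau$-tilting pair has exactly two completions) shows each codimension-one wall is shared by exactly two maximal cones, and a finite polyhedral fan in $\mathbb{R}^n$ all of whose walls are internal must have support equal to $\mathbb{R}^n$ (the topological boundary of the support, being a closed set, would otherwise have to contain a piece of some wall). This is fine, though you should spell out the last step: the boundary of the support of a finite fan, if nonempty, has codimension one, contradicting that every $(n-1)$-face is shared.

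The $(\Rightarrow)$ direction as you present it has a genuine gap. First, in a simplicial fan the relative interiors of \emph{all} cones (of every dimension) partition the support, so the distinction you draw between ``not in the relative interior of any cone'' and ``not on the boundary of any cone'' is spurious; once you rule out membership in the relative interior of every cone, you are done, and no ``limiting/closure argument'' is available or needed. Second, and more seriously, the crux of the hard direction is hidden in the phrase ``one exhibits directions $\theta$ whose stable subcategory contains a brick that is not $\tau$-rigid.'' Proposition \ref{Prop: tau-tilting-finiteness} hands you a non-functorially-finite torsion class, but it does not hand you a stability parameter $\theta \in K_0(\proj A)_{\mathbb{R}}$ realizing it: the passage from a pathological torsion class to a vector that lies outside every (possibly accumulating) cone of $\mathcal{F}_A$ is precisely where the real work lies, and it requires the semistability characterization of $|\mathcal{F}_A|$ in \cite{As2} or \cite{AI} (namely, that $\theta$ lies in $|\mathcal{F}_A|$ if and only if the $\theta$-semistable wide subcategory is generated by a $\tau$-rigid pair compatible with $\theta$), together with an actual construction of such a $\theta$. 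Your alternative suggestion, chasing an infinite mutation sequence and taking a ``limit $g$-vector,'' has the same defect: one must rule out that the limit lands on a face of some cone, and no mechanism for doing so is offered. As written, the proposal correctly identifies the relevant machinery but does not supply the argument that makes the hard implication go through.
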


From the above proposition it is immediate that $A$ is a brick-infinite algebra if and only if there are vectors in $\mathbb{R}^n$ that do not belong to the $\tau$-tilting fan $\mathcal{F}$. However, we note that such a vector, \emph{a priori}, does not need to be rational. In fact, this motivates one of the open problems that have been posed in recent years. However, to put this in context and explain the connections to our study of bricks, we first recall another notion of finiteness that has been considered more recently.

\begin{definition}\label{Def: E-finite}
An algebra $A$ is called \emph{$E$-finite} if for every $\tau$-regular component $\Z \in \Irr(A)$ we have $c(\Z)=0$. 
\end{definition}

Observe that $E$-finite algebras were introduced in \cite[Definition 6.3]{AI}, where this notion was defined in a slightly different language. However, by Theorem \ref{Plamondon's result} and some further standard arguments, it turns out that the definition given above is equivalent to the original definition. We particularly use this (equivalent) definition of $E$-finiteness because it allows us to better highlight the connections to the study of irreducible components of the algebra under consideration, as discussed in the preceding sections. Moreover, we use this notion to state (a reformulation of) a question that has been posed on the behavior of the $\tau$-tilting fan of algebras. More specifically, in \cite{De} Demonet originally asked if for every brick-infinite algebra $A$, there always exists a rational ray in the ambient space $\mathbb{R}^n$ which falls outside the $\tau$-tilting fan $\mathcal{F}_A$. 
Since this question is still open and is widely believed to hold, we state it as a conjecture.

\begin{conjecture}\label{Demonet's Conj}
If $A$ is $E$-finite, then $A$ is brick-finite.
\end{conjecture}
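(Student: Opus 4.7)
The plan is to attack the contrapositive: assuming $A$ is brick-infinite, I will try to exhibit a $\tau$-regular component $\mathcal{Z} \in \Irr(A)$ with $c(\mathcal{Z}) > 0$. By Proposition \ref{Prop: g-finite equ. brick-finite}, the $\tau$-tilting fan $\mathcal{F}_A$ fails to cover $K_0(\proj A)_\mathbb{R} \cong \mathbb{R}^n$, so the complement $U := \mathbb{R}^n \setminus \mathcal{F}_A$ is a non-empty Euclidean-open set, and density of the rationals (together with clearing denominators) yields an integer $g$-vector $\theta \in U \cap K_0(\proj A)$.

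Suppose one can further arrange for $\theta$ to be \emph{sincere}. Then by Theorem \ref{Plamondon's result}, $\theta$ corresponds to a $\tau$-regular component $\mathcal{Z}_\theta$, and I claim $c(\mathcal{Z}_\theta) > 0$. For if $c(\mathcal{Z}_\theta) = 0$, a generic $M \in \mathcal{Z}_\theta$ would have dense orbit, and the equalities $c(\mathcal{Z}_\theta) = e(\mathcal{Z}_\theta) = h(\mathcal{Z}_\theta) = 0$ granted by $\tau$-regularity would force $\Hom_A(M, \tau M) = 0$, making $M$ a $\tau$-rigid module. Sincerity of $\theta$ guarantees that the generic $f \in \Hom_A(P(\theta^-), P(\theta^+))$ is the minimal projective presentation of $M = \coker f$, so the $g$-vector of $M$ is exactly $\theta$. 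Decomposing $M$ into indecomposable $\tau$-rigid summands expresses $\theta$ as a non-negative integer combination of rays spanning a single (support $\tau$-tilting) cone of $\mathcal{F}_A$ (Theorem \ref{Thm: tau-fan}), placing $\theta$ inside $\mathcal{F}_A$ and contradicting $\theta \in U$. Thus, modulo sincerity, the argument closes.

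The decisive difficulty---and presumably the reason the conjecture remains open---is in securing sincerity. A $g$-vector is non-sincere exactly when its canonical decomposition contains a summand $-[P_i]$, and nothing obvious prevents every integer point of $U$ from suffering this defect. The natural hope is that non-sincere $g$-vectors are always trapped inside $\mathcal{F}_A$: since each $-[P_i]$ is itself a ray of $\mathcal{F}_A$, attaching such a summand to any compatible vector ought to pull the sum into an existing cone of $\mathcal{F}_A$. Turning this heuristic into a theorem seems to demand fine control over how the canonical decomposition interacts with the boundary of $\mathcal{F}_A$.

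As a fallback, I would try to perturb any $\theta \in U$ by a small non-negative integer combination of indecomposable $\tau$-rigid $g$-vectors to cancel the shifted-projective part of its canonical decomposition while still remaining inside the open set $U$; verifying that such a perturbation exists, however, is itself non-trivial. Alternatively, one might mimic the argument from the tame case (Theorem \ref{Thm: CKW on tame}), where a component with $c > 0$ containing infinitely many orbits of bricks produces $\theta$-stable modules lacking open orbits; the obstacle there is that the tame argument critically uses $c(\mathcal{Z}) = 1$ together with $\tau$-fixed behaviour, features not available in general. In all three routes the main obstacle is the same: passing from the existence of a geometric gap in $\mathcal{F}_A$ to a concrete $\tau$-regular component that witnesses the gap. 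A genuinely new input about the interface between canonical decompositions and the fan $\mathcal{F}_A$ seems to be what is missing.
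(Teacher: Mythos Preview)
The statement is presented in the paper as an \emph{open conjecture}; no proof is given there, and the paper explicitly frames it as a reformulation of an open question of Demonet. Your plan does not close the gap, and the genuine error occurs earlier than where you locate it.

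You assert that $U = \mathbb{R}^n \setminus \mathcal{F}_A$ is a non-empty Euclidean-open set, so that density of $\mathbb{Q}^n$ yields an integral $\theta \in U$. This is false: $\mathcal{F}_A$ is a union of possibly infinitely many closed cones, and such a union need not be closed. Already for the Kronecker quiver the support of $\mathcal{F}_A$ is all of $\mathbb{R}^2$ except a single ray, so $U$ is that ray---closed, with empty interior. There the ray happens to be rational, but your openness argument gives no mechanism for producing a rational point of $U$ in general, and nothing you wrote excludes $U$ from consisting entirely of irrational directions.

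Conversely, the sincerity obstruction you flag as ``decisive'' is not the heart of the matter. Suppose $A$ is $E$-finite and $\theta \in K_0(\proj A)$ is arbitrary. In its canonical decomposition $\theta = \theta_1 + \cdots + \theta_r$, each $\theta_i$ is either some $-[P_j]$ or an indecomposable sincere $g$-vector; in the latter case Theorem~\ref{Plamondon's result} assigns to it an indecomposable $\tau$-regular component $\mathcal{Z}_i$, and $E$-finiteness gives $h(\mathcal{Z}_i)=c(\mathcal{Z}_i)=0$, so the generic module $M_i \in \mathcal{Z}_i$ is indecomposable $\tau$-rigid with $g$-vector $\theta_i$. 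Pairwise compatibility of the $\theta_i$ then says exactly that the $M_i$ together with the relevant $P_j$ form a $\tau$-rigid pair, so all the $\theta_i$ lie in a single cone of $\mathcal{F}_A$ and hence $\theta \in \mathcal{F}_A$. Thus $E$-finiteness already forces \emph{every} integral $g$-vector into the fan, sincere or not. The upshot is that your ``preliminary'' step~1---producing an integral $\theta \notin \mathcal{F}_A$ from brick-infiniteness alone---is not a preliminary at all: it is precisely Demonet's original question, which the paper identifies as equivalent to the conjecture.
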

  
\medskip

In Figure \ref{Fig: Known and unknown implications}, we have summarized a sequence of known implications and open conjectures. For more details on the new results on these implications, see \cite{MP3} and references therein.

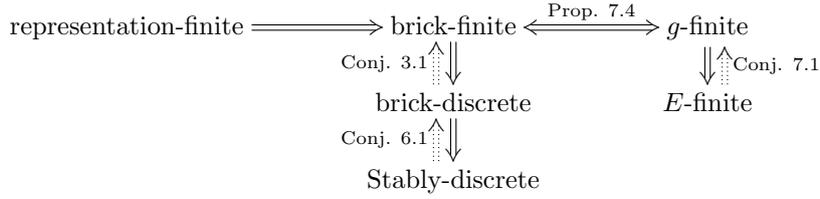
\begin{figure}
\begin{align*}
\xymatrix@R=1.5em@C=4em{
\text{representation-finite}\ar@{=>}[r]&\text{brick-finite}\ar@{<=>}[r]^-{\text{Prop. \ref{Prop: g-finite equ. brick-finite}}}\ar@{=>}[d]^-{\text{}}&\text{$g$-finite}\ar@{=>}[d] \\
 &\text{brick-discrete}\ar@{=>}[d]^-{\text{}} \ar@{=>}@<1.5ex>@{.>}[u]^-{\text{Conj. \ref{2nd bBT Conj.}}}& \text{$E$-finite}\ar@{=>}@<-1.5ex>@{.>}[u]_-{\text{Conj. \ref{Demonet's Conj}}} \\
 &\text{Stably-discrete} \ar@{=>}@<1.5ex>@{.>}[u]^-{\text{Conj. \ref{CKW Conj.}}} & } 
\end{align*}
\caption{Solid arrows indicate that the implication is known in full generality, whereas the unknown (conjectural) implications are specified by dotted arrows.}\label{Fig: Known and unknown implications}
\end{figure}

\subsection{Tameness}
For almost every new notion of finiteness that has appeared in representation theory, there is often a corresponding notion of tameness. In the following we only briefly recall those which relate to the scope of these notes.

\begin{definition}
An algebra $A$ is called \emph{brick-tame} if, for each dimension vector $\mathbf{d}\in \mathbb{Z}^n_{\geq 0}$,
there exist $A-k[x]$-bimodules $M_1, M_2, \cdots M_r$ which are free and of finite rank as right $k[x]$-modules and almost all bricks of dimension vector $\mathbf{d}$ are of the form $M_i\otimes_{k[x]} k[x]/\langle x-\lambda\rangle$, for some $1\leq i \leq r$.
\end{definition}

We remark that the notion of brick-tameness was first introduced in \cite{BD} and ultimately in a more general setting. A reformulation of this notion which is closer to the one given above appeared in \cite{CC}, where the authors showed some interesting results for some algebras. 
One immediately observes that in the more modern notion of brick-tameness, bricks replace the role of indecomposable modules in the classical notion of tameness. In fact, we remark that every tame algebra is brick-tame, but the converse is not necessarily true. For example, every wild local algebra is brick-finite, and therefore brick-tame. As a less immediate example, one can show that the following algebra $A=kQ/I$ is wild and brick-infinite, but it is brick-tame (for more details, see \cite{BD}, \cite{CC} and references therein). Here, the quiver $Q$ is given by
$$\xymatrix{1\ar[dr] & & 4 \ar[dr]^\beta & \\ 2 \ar[r] & 3 \ar[rr] \ar[ur]^\alpha &&5}$$
and the admissible ideal is $I = \langle \beta\alpha \rangle$.

\medskip

We now recall another modern notion of tameness, which is defined with respect to the behavior of the $\tau$-tilting fan of the algebra under consideration.

\begin{definition}
An algebra $A$ is called \emph{$g$-tame} if the $\tau$-tilting fan $\mathcal{F}_A$ is dense in $\mathbb{R}^n$, that is, the closure of the union of all cones of the fan equals $\mathbb{R}^n$.
\end{definition}

As shown in \cite{PY}, every tame algebra is $g$-tame, however the converse is not necessarily true. One can again consider wild local algebras as an immediate non-example. For less immediate non-examples and further details on $g$-tame algebras, we refer to \cite{PY}.

\medskip 

To recall yet another related notion of tameness recently introduced in the literature, we first need to introduce and recall some results to motivate the notion. In that regard, we let  $K^{[-1,0]}(\rm projA)$ denote the full subcategory of $K^b(\rm projA)$ which consists of complexes in degree $-1$ and $0$. Moreover, for each $g$-vector $\theta \in K_0(\rm projA)$, recall that we let $\theta$ = $\theta^+ - \theta^-$, where $\theta^+ = (\theta'_i)$ such that $\theta_i' := {\rm max}\{0, \theta_i\}$, and $\theta^-=(\theta''_i)$ given by $\theta''_i := -{\rm min}\{0, \theta_i\}$. 
In particular, $\theta^+, \theta^- \in \mathbb{Z}^n_{\geq 0}$ and they do not share nonzero coefficients.
As before, for $\theta \in K_0(\rm projA)$, we consider $P(\theta^+) := \oplus_{i=1}^n P_i^{(\theta^+)_i}$ and $P(\theta^-) := \oplus_{i=1}^n P_i^{(\theta^-)_i}$.

\medskip

Now, we are ready to recall the following notion of indecomposability.

\begin{definition}
A $g$-vector $\theta \in K_0(\rm projA)$ is called \emph{indecomposable} if $\Hom(P(\theta^-), P(\theta^+))$ has a dense set of indecomposables in $K^b(\rm projA)$.
\end{definition}

We remark that, for an algebra $A$ of rank $n$, each indecomposable $g$-vector $\theta \in K_0(\rm projA)$ belongs to the set
$\{g_{\Z} \mid \Z \text{ is an indecomposable $\tau$-regular component} \} \cup \{-[P_i]\mid i=1,2,\ldots, n\}$.
As we will observe below, indecomposable $g$-vectors play a crucial role in some new notions of tameness. However, we first need to recall some more notations and terminology. 
In particular, using the same notations as above, for $\theta_1$ and $\theta_2$ in $K_0(\rm projA)$, we let
$$E(\theta_1,\theta_2):= \min \Big\{ \dim_k \Hom(X,Y[1]) \mid X: P(\theta_1^-) \to P(\theta_1^+) \text{ and } Y: P(\theta_2^-) \to P(\theta_2^+) \Big\}.$$
Then, two $g$-vectors $\theta_1$ and $\theta_2$ in $K_0(\rm projA)$ are said to be \emph{compatible} if $E(\theta_1,\theta_2)=0=E(\theta_2,\theta_1)$. Now, we can recall the following important notion of decomposition, as studied by Derksen and Fei in \cite{DF}.

\begin{definition}
For a $g$-vector $\theta \in K_0(\rm projA)$, the \emph{canonical decomposition} of $\theta$ is given by the expression $\theta=\theta_1+\theta_2+\cdots+\theta_r$ such that there exists a nonempty dense set $\mathcal{O} \subseteq \Hom(P(\theta^-),P(\theta^+))$ where each $X \in \mathcal{O}$ decomposes into indecomposables in $K^b(\rm projA)$ whose $g$-vectors are $\theta_1,\theta_2,\cdots,\theta_r$.
\end{definition}

The following theorem gives a complete description of the canonical decomposition of a given $g$-vector.

\begin{theorem}\cite{DF} 
For a $g$-vector $\theta \in K_0(\rm projA)$, the decomposition $\theta=\theta_1+\theta_2+\cdots+\theta_r$ is the canonical decomposition of $\theta$ if and only if the following hold:
\begin{itemize}
    \item Each $\theta_i$ is indecompoable;
    \item For each $1\leq i<j \leq r$, the $g$-vectors $\theta_i$ and $\theta_j$ are compatible.
\end{itemize}
\end{theorem}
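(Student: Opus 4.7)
The plan is to prove both directions using the irreducibility of the affine space $\Hom(P(\theta^-),P(\theta^+))$ together with upper semi-continuity of $X\mapsto \dim_k\Hom(X,X[1])$ (more generally of $(X,Y)\mapsto \dim_k\Hom(X,Y[1])$ on the product of two such spaces) and lower semi-continuity of orbit dimension under the natural $\Aut(P(\theta^-))\times\Aut(P(\theta^+))$-action. Throughout, \emph{generic} will mean lying in some nonempty open subset, and irreducibility guarantees that finitely many generic conditions can be imposed at once.

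For necessity ($\Rightarrow$), suppose $\theta=\theta_1+\cdots+\theta_r$ is the canonical decomposition, so that a dense open $\mathcal{O}\subseteq \Hom(P(\theta^-),P(\theta^+))$ consists of complexes splitting as $X=X_1\oplus\cdots\oplus X_r$ with each $X_i$ indecomposable of $g$-vector $\theta_i$. First I would show each $\theta_i$ is indecomposable by projecting, after a choice of block splittings on $P(\theta^-)$ and $P(\theta^+)$, from $\mathcal{O}$ onto $\Hom(P(\theta_i^-),P(\theta_i^+))$: the image contains the indecomposables coming from generic $X\in\mathcal{O}$, and a morphism of irreducible varieties with dense image forces a dense set of indecomposables on the target. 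For pairwise compatibility, if $E(\theta_i,\theta_j)>0$ for some $i\neq j$, then a generic pair $(X_i,X_j)$ admits a nonzero class in $\Hom(X_i,X_j[1])$; the cone construction produces a complex of $g$-vector $\theta_i+\theta_j$ whose indecomposable decomposition differs from $\theta_i\oplus\theta_j$, and such deformations fill in a stratum of larger dimension than the one carved out by block-diagonal sums, contradicting that $X_1\oplus\cdots\oplus X_r$ is generic.

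For sufficiency ($\Leftarrow$), assume each $\theta_i$ is indecomposable and all pairs $(\theta_i,\theta_j)$ are compatible. By indecomposability, there is a dense open $U_i\subseteq \Hom(P(\theta_i^-),P(\theta_i^+))$ consisting of indecomposable complexes. By compatibility together with upper semi-continuity of $\dim\Hom(-,-[1])$, I may shrink each $U_i$ so that for all $X_i\in U_i$ and $X_j\in U_j$ with $i\neq j$, the groups $\Hom(X_i,X_j[1])$ and $\Hom(X_j,X_i[1])$ vanish. For any such tuple, the direct sum $X=X_1\oplus\cdots\oplus X_r$, placed inside $\Hom(P(\theta^-),P(\theta^+))$ via the inclusion $P(\theta_i^\pm)\hookrightarrow P(\theta^\pm)$, is an indecomposable decomposition with $g$-vectors $(\theta_1,\ldots,\theta_r)$. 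The decomposition of $\Hom(X,X[1])=\bigoplus_{i,j}\Hom(X_i,X_j[1])$ collapses, by compatibility, to $\bigoplus_i\Hom(X_i,X_i[1])$; a tangent-space count then shows that the $\Aut(P(\theta^-))\times\Aut(P(\theta^+))$-saturation of the family $\{X=\bigoplus_iX_i\}$ is a constructible subset of $\Hom(P(\theta^-),P(\theta^+))$ whose dimension equals that of the ambient space, hence is dense by irreducibility. Thus the canonical decomposition of $\theta$ is precisely $\theta_1+\cdots+\theta_r$.

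The main obstacle is the tangent-space/dimension count in the sufficiency direction: one must see that as $X_i$ ranges over $U_i$ and acts by the base-change group, the resulting family actually fills an open dense set rather than a proper subvariety. Concretely, the tangent space to the orbit at $X$, plus the image of the infinitesimal deformations controlled by $\bigoplus_i\Hom(X_i,X_i[1])$ (the parameter spaces of the individual $U_i$'s), should exhaust the tangent space to $\Hom(P(\theta^-),P(\theta^+))$ at $X$. Verifying this requires comparing $\dim\Hom(P(\theta^-),P(\theta^+))$ with the sum $\dim\Aut(P(\theta^-))+\dim\Aut(P(\theta^+))-\dim\Aut_{K^b}(X)+\sum_i\dim\Hom(X_i,X_i[1])$, which is exactly the identity that fails precisely when some $E(\theta_i,\theta_j)\neq 0$. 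This is where the compatibility hypothesis is used essentially.
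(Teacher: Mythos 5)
The paper does not actually prove this theorem: it is stated as a citation to Derksen--Fei~\cite{DF}, so there is no in-text proof to compare yours against. Judged on its own, your sketch follows what is essentially the strategy of the Derksen--Fei argument (generic decomposition in the irreducible affine presentation space, semicontinuity, orbit-dimension counting), but the step you yourself flag as ``the main obstacle'' is not a formality you can postpone: it is the whole content of the theorem, and as written your proposal does not close it.

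Concretely, the identity you need is a 2-term-complex analogue of Voigt's lemma: for a presentation $X \in \Hom(P(\theta^-),P(\theta^+))$ acted on by $G=\Aut(P(\theta^-))\times\Aut(P(\theta^+))$, one must identify the normal space $T_X\Hom(P(\theta^-),P(\theta^+))/T_X(G\cdot X)$ with $\Hom_{K^b}(X,X[1])$. Your formula for the orbit dimension uses $\dim\Aut_{K^b}(X)$, but the stabilizer of $X$ in $G$ consists of pairs $(g_-,g_+)$ with $g_+X=Xg_-$, i.e.\ chain-level automorphisms, and relating that to $\End_{K^b}(X)$ (where morphisms are taken up to homotopy, and where $X$ may have contractible summands when $\theta$ is not sincere) is precisely the computation that has to be carried out. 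Once that identity is in hand, the sufficiency direction works as you indicate: the $G$-saturation of $\{\bigoplus_i X_i\}$ has dimension $\dim G - \dim\mathrm{Stab}+\sum_i E(\theta_i,\theta_i)$, and compatibility is exactly what makes $\Hom(X,X[1])$ collapse to $\bigoplus_i\Hom(X_i,X_i[1])$ so that this matches the ambient dimension. In the necessity direction there are two further gaps. For indecomposability of each $\theta_i$, you cannot simply ``project'' a generic $X\in\mathcal{O}$ onto $\Hom(P(\theta_i^-),P(\theta_i^+))$, because the Krull--Schmidt decomposition of a generic $X$ is not aligned with any fixed block splitting of $P(\theta^\pm)$; what you should say instead is that the map $(g,X_1,\dots,X_r)\mapsto g\cdot(X_1\oplus\cdots\oplus X_r)$ from $G\times\prod_i\Hom(P(\theta_i^-),P(\theta_i^+))$ is dominant (since its image contains $\mathcal{O}$), and then a Chevalley/constructibility argument forces the indecomposables to be dense in each factor. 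For compatibility, the cone argument is the right idea, but ``such deformations fill in a stratum of larger dimension'' again amounts to the same Voigt-type dimension count you deferred. In short: the strategy is correct and matches the literature, but the engine of the proof (the presentation-space Voigt lemma and the resulting dimension bookkeeping) is asserted rather than established.
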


Now we recall two new notions of tameness that have been considered in recent years.

\begin{definition}
Let $A$ be an algebra. Then,
\begin{enumerate}
    \item A is called \emph{$E$-tame} if, for each $g$-vectors $\theta \in K_0(\rm projA)$, we have $E(\theta,\theta)=0$.
    \item $A$ is said to be \emph{stably-tame} if,  for each $g$-vectors $\theta \in K_0(\rm projA)$ and every $\Z \in \Irr(A)$, the dimension of $\mathcal{M}^{\theta-s}(\mathcal{Z})$ is either $0$ or $1$.
\end{enumerate}
\end{definition}

In Figure \ref{Fig: Known and unknown implications of tameness}, we have summarized a set of implications and open questions on various notions of tameness discussed above.

\medskip

Let us mention the following result, which was first proved by Pfeifer in \cite{Pf}, and was also proved later in \cite{MP2}.

\begin{prop}\label{Prop: E-tame but not E-finite}
    Let $A$ be $E$-tame but not $E$-finite. Then $A$ admits an infinite family of bricks, which are all $\theta$-stable for some $\theta \in K_0(\proj A)$.
\end{prop}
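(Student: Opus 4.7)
The plan is as follows. Using the hypothesis that $A$ is not $E$-finite, I would first find a $\tau$-regular irreducible component $\mathcal{Z}\in\Irr(A,\mathbf{d})$ with $c(\mathcal{Z})=e(\mathcal{Z})=h(\mathcal{Z})\geq 1$, and invoke Theorem~\ref{Plamondon's result} to obtain a sincere $g$-vector $\theta\in K_0(\proj A)$ encoding $\mathcal{Z}$. After replacing $\theta$ by a summand of its canonical decomposition so that $\theta$ is indecomposable (while the corresponding $\tau$-regular component still has $c\geq 1$, by additivity of $c$ under canonical decomposition), I may assume that the generic $V\in\mathcal{Z}$ is indecomposable.

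Next, I would translate the $E$-tameness hypothesis into orthogonality of the generic modules in $\mathcal{Z}$. Applying $\Hom_A(-,M)$ to the minimal projective presentation $P(\theta^-)\xrightarrow{f}P(\theta^+)\to V_f\to 0$ yields the four-term exact sequence recalled in Section~\ref{Sec: Semi-invatiant and CKW conj.}, and since $\theta$ is sincere this simplifies to the identity
$$\dim \Hom_A(V_f,M)-\dim \Hom_A(M,\tau V_f)=\theta\cdot \mathbf{d}_M.$$
Via the standard identification of $\Hom_{K^b(\proj A)}(X,Y[1])$ with $D\Hom_A(V_g,\tau V_f)$ for the complexes $X,Y$ corresponding to the minimal projective presentations of $V_f,V_g$, the assumption $E(\theta,\theta)=0$ becomes $\Hom_A(V_g,\tau V_f)=0$ for $V_f,V_g$ in general position in $\mathcal{Z}$. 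Substituting, one obtains $\dim \Hom_A(V_f,V_g)=\theta\cdot \mathbf{d}$ for generic distinct pairs, and $\dim \End_A(V)=\theta\cdot \mathbf{d}+c(\mathcal{Z})$ on the diagonal; by Auslander-Reiten duality the generic distinct modules in $\mathcal{Z}$ are in addition pairwise $\Ext^1$-orthogonal.

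From these identities, I would argue that the generic $V\in\mathcal{Z}$ is a brick and then verify $\theta$-stability exactly as in the proof of Theorem~\ref{Thm: CKW on tame}. Granting the brick-ness, one has $\theta([V])=\dim \End(V)-\dim \Hom(V,\tau V)=0$, and for each non-zero proper submodule $L\subset V$ the brick property gives $\Hom(V,L)=0$ while the inclusion $L\hookrightarrow V$, combined with the structure of the $\tau$-orbit of $V$ in $\mathcal{Z}$, forces $\Hom(L,\tau V)\neq 0$, hence $\theta([L])<0$. Since $c(\mathcal{Z})\geq 1$, the locus of $\theta$-stable bricks in $\mathcal{Z}$ is a dense open subset containing infinitely many orbits, yielding the desired infinite family of $\theta$-stable bricks of common dimension vector $\mathbf{d}$.

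The principal obstacle is the brick-ness step: the identity $\dim \End_A(V)=\theta\cdot \mathbf{d}+c(\mathcal{Z})$ forces a generic brick on such a component to satisfy both $\theta\cdot \mathbf{d}=0$ and $c(\mathcal{Z})=1$. In the tame case of Theorem~\ref{Thm: CKW on tame}, the value $c(\mathcal{Z})=1$ is automatic, whereas under the weaker $E$-tameness hypothesis it must be arranged by a more delicate geometric analysis, carried out in \cite{Pf} and \cite{MP2}, exploiting the pairwise $\Ext^1$-orthogonality of generic modules to rule out higher-dimensional generic endomorphism rings and reduce, if necessary, to a $\tau$-regular component realizing both constraints simultaneously.
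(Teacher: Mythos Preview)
Your outline follows the natural strategy and correctly identifies the key difficulty, but there are two genuine gaps.

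\medskip

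\textbf{The stability verification does not transfer from the tame case.} In the proof of Theorem~\ref{Thm: CKW on tame} the inequality $\theta([L])<0$ comes from $\Hom(L,\tau B)=\Hom(L,B)\neq 0$, which uses the tame-specific fact $\tau B\cong B$. Under $E$-tameness alone there is no reason for $\tau V$ to lie in $\mathcal{Z}$, let alone be isomorphic to $V$, so your appeal to ``the structure of the $\tau$-orbit of $V$ in $\mathcal{Z}$'' does not justify $\Hom(L,\tau V)\neq 0$. What one does get cheaply is \emph{semistability}: for generic $V_f\neq V$ and any quotient $N$ of $V$, the surjection $V\twoheadrightarrow N$ gives $\Hom(N,\tau V_f)\hookrightarrow\Hom(V,\tau V_f)=0$, whence $\theta([N])=\dim\Hom(V_f,N)\geq 0$. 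Passing from semistability to stability requires a further argument (for instance, showing that the Schofield semi-invariants $C^f$ for varying generic $f$ separate the generic modules in $\mathcal{Z}$, so that $\dim\mathcal{M}^{\theta\text{-}ss}(\mathcal{Z})\geq 1$, and then deducing that some smaller moduli of $\theta$-stable modules must be positive-dimensional).

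\medskip

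\textbf{The brick-ness step is the heart of the matter and is not argued.} You correctly observe that your identity $\dim\End_A(V)=\theta\cdot\mathbf{d}+c(\mathcal{Z})$ forces $\theta\cdot\mathbf{d}=0$ and $c(\mathcal{Z})=1$ \emph{if} the generic $V$ is a brick. But the direction you need is the converse: from $E$-tameness you must produce an indecomposable $\tau$-regular component with these two constraints. Your hint (``exploiting the pairwise $\Ext^1$-orthogonality to rule out higher-dimensional generic endomorphism rings'') is not an argument: nothing you have written so far prevents $c(\mathcal{Z})\geq 2$, and $E(\theta,\theta)=0$ by itself does not obviously bound $c(\mathcal{Z})$ (indeed $c(\mathcal{Z}_{m\theta})=m\cdot c(\mathcal{Z})$ under your hypotheses). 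Note also that even the Schofield semi-invariant $C^f$ is only defined on $\mathcal{Z}$ when $\theta\cdot\mathbf{d}=0$, so without first establishing this you cannot invoke King's criterion. The papers \cite{Pf} and \cite{MP2} do carry out this reduction, but it is the substantive content of the proposition, and your proposal does not supply it.
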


\begin{figure}
\begin{align*}
\xymatrix@R=1.5em@C=4em{
\text{tame}\ar@{=>}[r]^-{\text{\cite{CC}}}\ar@{=>}[dd]_-{\text{\cite{PY}}}\ar@{=>}[rdd]^-{\text{\cite{AI}}}&\text{brick-tame}\ar@{=>}[r]^-{\text{Def}}\ar@{=>}@<1.5ex>@{.>}[dd]&\text{stably-tame} \ar@{=>}@<1.5ex>@{.>}[l]\\ 
& & \\
\text{$g$-tame}\ar@{=>}@<1.5ex>@{.>}[r] &\text{$E$-tame}\ar@{=>}@<1.5ex>@{.>}[l]\ar@{=>}@<1.5ex>@{.>}[uu] & } 
\end{align*}
\caption{Solid arrows indicate that the implication is known. The unknown implications are specified by dotted arrows.}\label{Fig: Known and unknown implications of tameness}
\end{figure}
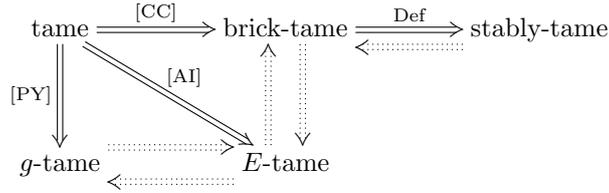

\medskip

Now, we are ready to state the following theorem on tame algebras, which strengthens the connections between various classical and modern notions of finiteness. Furthermore, it highlights the significance of components of the Auslander-Reiten quiver of algebras in the study of such properties and the open conjectures listed above. 

\medskip

In the following, by $\Gamma_A$ we denote the Auslander-Reiten quiver of $A$. 
We particularly recall that a component $\mathcal{C}$ of $\Gamma_A$ is said to be \emph{generalized standard} if $\rad^{\infty}(X,Y)=0$, for all $X$ and $Y$ in $\mathcal{C}$. In particular, every preprojective (similarly, preinjective) component is known to be generalized standard, but there are many generalized standard components which are neither preprojective nor preinjective. 
It is well known that the components of $\Gamma_A$ play a decisive role in the systematic study of representation theory of algebras, and various properties of them have been studied for several decades. More specifically, for more details on the generalized standard components and their properties, we refer to \cite{SS} and references therein.

\begin{theorem}\label{Thm: equivalences for tame algebras}
Let $A$ be a tame algebra. The following are equivalent:
\begin{enumerate}
    \item $A$ is brick-discrete;
    \item $A$ is $E$-finite;
    \item $A$ is stably-discrete;
    \item $\Gamma_A$ has no generalized standard components;
    \item $\Gamma_A$ has no infinite family of Hom-orthogonal tubes with quasi-simples of the same dimension.
\end{enumerate}
\end{theorem}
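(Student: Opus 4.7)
The plan is to take Theorem \ref{Thm: CKW on tame} as the cornerstone (it gives $(1)\Leftrightarrow(3)$) and to fold in conditions $(2)$, $(4)$, and $(5)$ by combining the algebro-geometric tools of Section \ref{Section: Geometry of bricks} with the structural theory of AR components of tame algebras surveyed in \cite{SS}. Throughout, I would handle the representation-finite case separately by direct inspection and focus the real work on the tame infinite-type setting.

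For $(1)\Leftrightarrow(2)$ I would prove $(2)\Rightarrow(1)$ by contrapositive, adapting the proof of Theorem \ref{Thm: CKW on tame}. If $A$ is tame and not brick-discrete, there exists an irreducible component $\mathcal{Z}\in\Irr(A)$ whose closure contains an open union of infinitely many brick orbits; tameness forces $c(\mathcal{Z})=1$ and makes $\mathcal{Z}$ a $\tau$-regular component (generically $\tau M \cong M$ on $\mathcal{Z}$), directly contradicting $E$-finiteness. For the converse $(1)\Rightarrow(2)$, if $A$ is $E$-finite then every $\tau$-regular component satisfies $c(\mathcal{Z})=0$, so its generic module has open orbit; since any brick has maximal-dimensional orbit within its component, and bricks over a tame algebra lie generically in $\tau$-regular components (via a standard semicontinuity argument together with Proposition \ref{Prop: vanishing Ext and orbit openennes} and the tameness bound on $c(\mathcal{Z})$), every brick must then have open orbit.

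For $(1)\Leftrightarrow(5)$ the key input is Remark \ref{Rem: brick-discrete and stably-discrete}(3), combined with \cite[Proposition 3.4]{MP3}. If $A$ is not brick-discrete then $A$ admits an infinite pairwise Hom-orthogonal family of bricks of a common dimension, and for a tame algebra the classification of bricks forces such a family to be realized as the quasi-simples at the mouths of an infinite Hom-orthogonal family of tubes of the same quasi-length, contradicting $(5)$. Conversely, such a family of tubes directly produces the required pairwise Hom-orthogonal family of bricks of the same dimension, so the failure of $(5)$ implies the failure of $(1)$.

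The main obstacle is the equivalence $(1)\Leftrightarrow(4)$, which must marry the algebraic information about bricks with the combinatorics of $\Gamma_A$. The plan is to invoke the structural results from \cite{SS} to argue that, in the representation-infinite tame setting, the generalized standard components of $\Gamma_A$ capable of supporting infinite families of bricks of the same dimension are exactly the tube-like ones, thereby linking $(4)$ to $(5)$; preprojective and preinjective generalized standard components do not contribute, as their bricks have unbounded dimension. The hardest part is ruling out exceptional generalized standard components -- neither preprojective, preinjective, nor standard tubes -- that might otherwise escape this dichotomy, which requires a careful use of the classification results for generalized standard components of tame algebras and a delicate coordination with the $\tau$-regular component picture from Theorem \ref{Plamondon's result}.
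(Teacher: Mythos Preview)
Your treatment of $(1)\Leftrightarrow(3)$ matches the paper's (both invoke \cite{CKW}), but your argument for $(1)\Leftrightarrow(2)$ has a genuine gap: what you label as ``the converse $(1)\Rightarrow(2)$'' actually begins ``if $A$ is $E$-finite'' and concludes that every brick has open orbit, so it re-proves $(2)\Rightarrow(1)$ a second time. The direction you have not touched is: \emph{if $A$ is brick-discrete, then every $\tau$-regular component has $c(\mathcal{Z})=0$}. Equivalently, you must show that an indecomposable $\tau$-regular component $\mathcal{Z}$ with $c(\mathcal{Z})=1$ necessarily contains infinitely many bricks. This is not automatic---a $\tau$-regular component need not a priori contain any brick---and your semicontinuity remarks do not supply it. The paper closes this gap by citing a correspondence from \cite{PY}: for tame $A$, the indecomposable $\tau$-regular components with $c(\mathcal{Z})=1$ are in bijection with the $1$-parameter families of bricks. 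That single input dispatches $(1)\Leftrightarrow(2)$ at once.

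For $(4)$ and $(5)$ the paper's proof is a one-line citation of \cite[Corollary~D]{MP3}, whereas you attempt a direct reduction via the structure theory of AR components in \cite{SS}. Your outline for $(1)\Leftrightarrow(5)$ is broadly plausible, but the step ``for a tame algebra the classification of bricks forces such a family to be realized as quasi-simples at the mouths of tubes'' needs real justification (a Hom-orthogonal family of bricks of fixed dimension does not automatically sit at tube mouths; one needs Crawley-Boevey's $\tau$-periodicity plus some care). For $(4)$ your plan is openly incomplete, and there is a further issue you should confront head-on: as literally stated, condition $(4)$ fails for every representation-finite algebra (the entire AR quiver is generalized standard) and indeed for any algebra with a preprojective component. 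Either $(4)$ is intended in a restricted sense (e.g., generalized standard \emph{stable tubes}, which is closer to how the result is phrased in \cite{MP3}), or your ``direct inspection in the representation-finite case'' will immediately collapse. Following the paper and deferring to \cite[Corollary~D]{MP3} is the safer route here.
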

\begin{proof}[Sketch of proof]
We observe that the equivalence of $(1)\leftrightarrow (3)$ is shown in \cite{CKW}. 
Moreover, if $A$ is tame, there is a correspondence between $\Big\{\text{indecomposable $\tau$-regular components $\Z$ with $c(\Z)=1$}\Big\}$ and $\Big\{\text{$1$-parameter families of bricks}\Big\}$, for instance, see \cite{PY}. This implies the equivalence of $(1), (2)$ and $(3)$. 
Finally, the equivalence of $(4)$ and $(5)$ to the other parts is shown in \cite[Corollary D]{MP3}.     
\end{proof}

We finish this section with some remarks on the above theorem and how it fits into the larger framework of the problems treated in our work and some other researchers. 

\begin{remark}\label{Rem: 2nd bBT open for tame}
With regard to Theorem \ref{Thm: equivalences for tame algebras}, we remark that although the 2nd bBT (see Conjecture \ref{2nd bBT Conj.}) is verified for some important families of tame algebras, the conjecture is still open for arbitrary tame algebras. More specifically, it is still not known whether the above $5$ conditions are also equivalent to $A$ being brick-finite. Some new results on this problem are discussed in the following section. 

\medskip

Meanwhile, we note that the above problems on the behavior of bricks are also closely related to the study of infinite dimensional brick, as shown in the important work of Sentieri \cite{Se}, as well as to the generic bricks, formally introduced by the authors in \cite{MP1}. We do not discuss these connections in the current survey. However, as noted earlier, the study of generic bricks over tame algebras is more tractable. In fact, it provides powerful tools in the treatment of several open brick-Brauer-Thrall Conjectures, which further leads to conceptual linkages between the scope of this manuscript and study of spectrum of algebras. For more details and some new results on generic bricks, see \cite{BPS}, \cite{MP1}, \cite{MP3}, and references therein. 

\medskip

Finally, we note that, in addition to some earlier work on $E$-finite and $E$-tame algebras in \cite{MP2} and \cite{Pf} primarily motivated by the study of bricks and some related studies in \cite{AI} and \cite{DF}, recently there have been some further results of the equivalence of $g$-tameness and $E$-tameness (and, respectively, on $g$-finiteness and $E$-finiteness) for some special families of algebras. In particular, in \cite{HY} some new studies on the equivalence of these notions are conducted for tame Jacobian algebras. Such results directly relate to Theorem \ref{Thm: equivalences for tame algebras} (and, therefore, Conjectures \ref{2nd bBT Conj.} and \ref{Demonet's Conj}), as well as our new results on tame algebras discussed in the following section. Since these connections are quite straightforward, we do not give an elaborate description of implications and leave the details to the reader.
\end{remark}

\vskip 0.5cm

\section{Projective dimension of bricks and bBT conjectures}\label{Sec: Pojective dimension of bricks and open conjectures}
It is well known that for any classification result on the behavior of indecomposable modules, tame algebras are generally better understood. However, despite some important results on their bricks that have already appeared in the literature, many modern conjectures still remain open for arbitrary tame algebras (see Remark \ref{Rem: 2nd bBT open for tame}). Meanwhile, it is known that many of the recent conjectures on bricks are in fact equivalent over tame algebras (see \cite[Section 2.2 and Corollary D]{MP3}). Thus, new treatments of brick-infinite tame algebras can provide a fresh impetus to the study of a range of closely related problems (for details, see \cite{MP3} and references therein). In this section, we use some of our earlier results and recent observations to propose a new and equivalent simultaneous treatment of several open conjectures on tame algebras. More specifically, we show that the study of some important problems, the so-called \emph{brick-Brauer-Thrall conjectures} (\textbf{bBT Conjectures}) listed in \cite{MP3}, closely relates to the study of the projective dimension of bricks. 

\medskip

To outline the content of this section and put our main results into perspective, we first recall a notion of minimality that allows us to naturally reduce the study of the bBT conjectures to some particular algebras of interest. Below, we work in the same setting and under the same assumptions listed in the beginning of the article. Recall that $\brick(A)$ denotes the set of (isomorphism classes of) bricks in $\modu A$.

\begin{definition}
An algebra $A$ is said to be \emph{minimal brick-infinite} if $\brick(A)$ is an infinite set, but $\brick(A/J)$ is finite, for every nonzero (two-sided) ideal $J$ in $A$. 
\end{definition}

We remark that minimal brick-infinite algebras were originally introduced through the lens of $\tau$-tilting theory, and they can be viewed as the modern counterparts of the classical minimal representation-infinite algebras (see \cite[Chapter V.I]{Mo1} and \cite[Section 3]{Wa}). These algebras are further studied in \cite{MP4}, where they are shown to manifest many interesting properties, including in the classical tilting theory. We particularly note that the study of the 2nd bBT conjecture can be reduced to minimal brick-infinite algebras (for details and nontrivial reductions, see \cite{MP4}). 
We also note that, thanks to the result of \cite{Se} on infinite-dimensional bricks, an alternative characterization of minimal brick-infinite algebras can be given in terms of such bricks: \emph{An algebra $A$ is minimal brick-infinite if and only if it admits an infinite-dimensional brick, and every such brick is faithful}. 
As remarked earlier, in the current work, our focus remains on the study of finite dimensional bricks.

\medskip

In this section, we mainly focus on those minimal brick-infinite algebras which are tame, and we derive some new results on this important family. More specifically, for each minimal brick-infinite tame algebra $A$, we reduce the 2nd bBT conjecture to the study of a particular torsion pair in $\modu A$. This ultimately implies an interesting condition on the projective dimension of bricks in $\modu A$.

\medskip

Before we present some useful facts, let us fix some notation. In particular, for an algebra $A$, we set $\mathcal{T}_I:=\Filt(\Fac(DA))$, where $D(-)=\Hom_k(-,k)$ is the standard dual. Namely, $\mathcal{T}_I$ denotes the smallest torsion class containing $DA$; this torsion class is equivalently determined as the smallest torsion class in $\tors(A)$ containing all injectives in $\modu A$. The corresponding torsion pair is denoted by $(\mathcal{T}_I, \mathcal{F}_I)$. 
Moreover, for a pair of torsion classes $\mathcal{T}_1$ and $\mathcal{T}_2$ in $\tors(A)$, define 
$$[\mathcal{T}_1, \mathcal{T}_2]:= \{\mathcal{T}\in \tors(A) \,|\, \mathcal{T}_1 \subseteq \mathcal{T} \subseteq \mathcal{T}_2 \},$$
which is the interval of torsion classes in $\tors(A)$ specified by $\mathcal{T}_1$ and $\mathcal{T}_2$. Finally, recall that $\mathcal{T} \in \tors(A)$ is called \emph{faithful} if for each nonzero ideal $J$ is $A$, there exists $M \in \mathcal{T}$ such that $JM\neq 0$.

\begin{lemma}\label{Lem: faithful torsion class}
For an algebra $A$, with the same notation as above, the following are equivalent:
\begin{enumerate}
    \item $\mathcal{T}$ in $\tors(A)$ is faithful.
    \item $\mathcal{T}_I \subseteq \mathcal{T}$.
\end{enumerate}
\end{lemma}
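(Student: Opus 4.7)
The forward direction $(2) \Rightarrow (1)$ will be essentially a one-line observation: $DA$ is a faithful left $A$-module (the standard duality exchanges left and right $A$-modules, and $A$ is faithful over itself), so $DA \in \mathcal{T}_I \subseteq \mathcal{T}$ forces $\ann(\mathcal{T}) \subseteq \ann(DA) = 0$, yielding faithfulness of $\mathcal{T}$.

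The substantive content is the converse $(1) \Rightarrow (2)$. Since $\mathcal{T}_I$ is by construction the smallest torsion class containing $DA$, it suffices to show $DA \in \mathcal{T}$; equivalently, that the torsion-free quotient $F := DA/t(DA)$ in the canonical decomposition $0 \to t(DA) \to DA \to F \to 0$ vanishes. The plan is a duality argument that produces, from a hypothetical nonzero $F$, a nonzero two-sided ideal annihilating all of $\mathcal{T}$, contradicting faithfulness.

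The key move is to exploit the $A$-bimodule structure on $DA$. Since the torsion radical $t$ is functorial under morphisms of left $A$-modules, and right multiplication by any $a \in A$ is such an endomorphism of $DA$, the submodule $t(DA)$ is in fact a sub-bimodule, and so $F$ inherits a bimodule structure. Dualizing the bimodule short exact sequence yields an injection $DF \hookrightarrow D(DA) = A$ of bimodules, hence $J := DF$ is a two-sided ideal of $A$, nonzero precisely when $F$ is. Moreover, under the evaluation pairing $DA \times A \to k$, the subspaces $t(DA) \subseteq DA$ and $J \subseteq A$ are mutual orthogonal complements.

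The computation is then quick: for any $M \in \mathcal{T}$, every homomorphism $\phi: M \to DA$ has image contained in $t(DA)$, because this image is a quotient of $M$ hence lies in $\mathcal{T}$, while $t(DA)$ is the largest submodule of $DA$ belonging to $\mathcal{T}$. Using the standard identification $\Hom_A(M, DA) \cong DM$ given by $\phi(m)(a) = \psi(am)$, the condition ``image of $\phi$ lies in $t(DA) = J^{\perp}$'' translates to $\psi(JM) = 0$. Since this must hold for every $\psi \in DM$ and $DM$ separates the points of $M$, we conclude $JM = 0$. Therefore $J \subseteq \ann(\mathcal{T}) = 0$, so $J = 0$ and $F = 0$. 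The main obstacle I anticipate is purely bookkeeping through the duality and bimodule structures; the conceptual heart of the argument is the orthogonality of $t(DA)$ and $J$ under the evaluation pairing, which cleanly transmutes the torsion-theoretic vanishing $\Hom(M, F) = 0$ into the annihilator statement $JM = 0$.
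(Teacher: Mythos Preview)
Your proof is correct but takes a genuinely different route from the paper's. For the direction $(1)\Rightarrow(2)$, the paper argues in two quick steps: first, a faithful torsion class contains a faithful module (by finite-dimensionality of $A$, finitely many modules in $\mathcal{T}$ already have trivial common annihilator, and their direct sum is in $\mathcal{T}$); second, a module $M$ is faithful if and only if there is an epimorphism $M^r \twoheadrightarrow DA$ for some $r$, so closure under quotients forces $DA \in \mathcal{T}$. Your argument instead works directly with the canonical torsion decomposition of $DA$, exploits the bimodule structure to realize $D(DA/t(DA))$ as a two-sided ideal $J \subseteq A$, and then uses the adjunction $\Hom_A(M,DA)\cong DM$ to show $J$ annihilates every $M \in \mathcal{T}$. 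The paper's approach is shorter because it outsources the work to the standard characterization of faithful modules (which is in fact recalled earlier in the paper); your approach is more self-contained and has the pleasant feature of making the obstructing ideal explicit, at the cost of some bookkeeping through the duality.
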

\begin{proof}
    We first note that a torsion class $\mathcal{T}$ is faithful if and only if it contains a faithful module $M$. Second, a module $M$ is faithful if and only if there exists a positive integer $r$ and an epimorphism $M^r \to DA$. This shows that if $\mathcal{T}$ is faithful, then it has to contain $DA$. Conversely, if $\mathcal{T}$ contains $DA$, then it is obviously faithful.
\end{proof}

The following result gives a better understanding of the lattice structure of torsion classes of minimal brick-infinite algebras. To put the result in perspective, note that for any brick-infinite algebra $A$, from Proposition \ref{Prop: tau-tilting-finiteness} it follows that $\tors(A)$ is an infinite lattice, and, moreover, infinitely many $\mathcal{T}\in \tors(A)$ are not functorially finite.

\begin{proposition}\label{Prop: lattice of torsion classes of min-brick-inf algs}
Let $A$ be a minimal brick-infinite algebra. With the same notation as above, we have
\begin{enumerate}
    \item $[\mathcal{T}_I, \modu A]$ consists of all faithful torsion classes of $\modu A$. It is a complete infinite sublattice of $\tors(A)$ which contains every non-functorially finite torsion class $\mathcal{T} \in \tors(A)$.
    \item $\tors(A) \setminus [\mathcal{T}_I, \modu A]$ is a finite set and each torsion class in it can be obtained from $0$ by applying finitely many right mutations. In particular, the Hasse diagram of $\tors(A) \setminus [\mathcal{T}_I, \modu A]$ is connected and consists only of unfaithful functorially finite torsion classes.
\end{enumerate}
\end{proposition}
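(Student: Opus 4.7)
The plan is to prove part (1) first, with the main technical task concentrated in the finiteness claim of part (2).

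Part (1) is largely formal. The identification of $[\mathcal{T}_I, \modu A]$ with the faithful torsion classes is immediate from Lemma \ref{Lem: faithful torsion class}, and that it forms a complete sublattice follows because intersections and joins in $\tors(A)$ of families each containing $\mathcal{T}_I$ still contain $\mathcal{T}_I$. For the last clause of (1), I would show the contrapositive: every unfaithful $\mathcal{T}$ is functorially finite. Writing $J = \ann(\mathcal{T}) \neq 0$, the inclusion $\mathcal{T} \subseteq \modu(A/J)$ combined with minimal brick-infiniteness forces $A/J$ to be brick-finite, hence $\tau$-tilting finite by Proposition \ref{Prop: tau-tilting-finiteness}. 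Therefore $\mathcal{T}$ is functorially finite inside $\modu(A/J)$. The approximations lift to $\modu A$: any $A$-morphism from an object of $\mathcal{T}$ to $X \in \modu A$ factors uniquely through $X/JX$ (since $\mathcal{T}$ is annihilated by $J$), so precomposing a right $\mathcal{T}$-approximation of $X/JX$ with the quotient $X \twoheadrightarrow X/JX$ produces a right $\mathcal{T}$-approximation of $X$ in $\modu A$, and dually for left approximations. By Theorem \ref{Thm: funct. finite torsion pairs}, $\mathcal{T}$ is functorially finite in $\modu A$.

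The heart of the proof is the finiteness claim in (2). Each unfaithful torsion class is functorially finite by the above, hence equals $\Fac(T)$ for a unique basic support $\tau_A$-tilting module $T$ with $\ann(T) \neq 0$, so it suffices to show there are only finitely many such $T$. The obstacle is that $\ann(T)$ may range over many distinct nonzero ideals of $A$ (in Kronecker, for instance, the 1-dimensional ideals already form a $\mathbb{P}^1$-family), so one cannot simply appeal to brick-finiteness of a single quotient. The reduction I would use is the brick-$\tau$-rigid correspondence (Proposition \ref{brick-tau-rigid corr.}): each indecomposable summand $T_i$ of $T$ is $\tau_A$-rigid and non-faithful, yielding (via $\varphi$) a non-faithful brick; since $T_i$ is $\tau$-rigid, $\Ext^1_A(T_i,T_i)=0$. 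Infinite families of non-faithful bricks over a minimal brick-infinite algebra arising from $\tau$-periodic components (e.g., regular tubes in tame cases) necessarily contain $\tau$-fixed members with nonzero self-extensions, which are therefore not rigid; consequently, only finitely many non-faithful $\tau_A$-rigid indecomposable modules exist over $A$. This is essentially the content of the reduction developed in \cite{MP4}, and is the step I expect to be technically most delicate. Granting it, a basic support $\tau_A$-tilting module is determined by an unordered subset of at most $n$ such indecomposables, yielding finiteness.

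Finally, combining finiteness of the complement with the infiniteness of $\tors(A)$ (Proposition \ref{Prop: tau-tilting-finiteness}, since $A$ is brick-infinite) gives that $[\mathcal{T}_I, \modu A]$ is infinite, completing (1). For (2b) and (2c), the unfaithful torsion classes form a downward-closed subset of $\tors(A)$ with $0$ as minimum, because $\mathcal{T}' \subseteq \mathcal{T}$ gives $\ann(\mathcal{T}') \supseteq \ann(\mathcal{T})$; moreover, no faithful torsion class can lie strictly between two unfaithful ones (faithful classes are upward closed above $\mathcal{T}_I$). Thus the Hasse diagram of the complement coincides with the induced subgraph of the Hasse diagram of $\tors(A)$, is finite, and is connected through saturated chains descending to $0$. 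Each cover relation in this subgraph, being between functorially finite torsion classes, is a right mutation of support $\tau_A$-tilting modules via Proposition \ref{Prop: func-finite cover relations} and \cite{AIR}, so every element of the complement is obtained from $0$ by finitely many right mutations, and the entire subgraph lies in the unfaithful functorially finite part, as claimed.
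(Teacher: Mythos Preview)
Your argument for the last clause of part (1) is genuinely different from the paper's, and in fact more elementary. The paper does \emph{not} prove the contrapositive directly; instead, for a non-functorially finite $\mathcal{T}$ it invokes \cite[Theorem 1.3]{DIJ} to produce an infinite ascending chain $\mathcal{T}_1 \subsetneq \mathcal{T}_2 \subsetneq \cdots$ of functorially finite torsion classes in $[0,\mathcal{T}]$, and then appeals to \cite[Theorem 4.2 and Corollary 4.4]{MP4} (that almost all support $\tau$-tilting modules over a minimal brick-infinite algebra are tilting, hence faithful) to conclude $\mathcal{T}_I \subseteq \mathcal{T}_i$ for large $i$, whence $\mathcal{T}_I \subseteq \mathcal{T}$. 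Your route via the quotient $A/J$ avoids the citation to \cite{MP4} in part (1) entirely, which is a real gain. One correction: a morphism $C \to X$ with $JC = 0$ factors through the \emph{submodule} $\{x \in X : Jx = 0\}$, not through $X/JX$; your $X/JX$ computation is the one relevant to \emph{left} approximations (morphisms $X \to C$). Since torsion classes are automatically contravariantly finite, it is precisely covariant finiteness you need, and the $X/JX$ argument handles that; only your labeling is off.

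For part (2), both you and the paper ultimately rest on \cite[Theorem 4.2]{MP4}, which asserts directly that a minimal brick-infinite algebra has only finitely many unfaithful support $\tau$-tilting modules; the paper simply cites this. Your attempted heuristic (``infinite families of non-faithful bricks \ldots\ arising from $\tau$-periodic components necessarily contain $\tau$-fixed members with nonzero self-extensions'') is not a valid general argument: there is no a priori reason non-faithful bricks should lie in $\tau$-periodic components, and even granting that each family contains a non-rigid member, this does not by itself bound the number of rigid ones. You are right to flag this as the delicate step and to defer to \cite{MP4}; just drop the heuristic, which is misleading outside the tame case. Your treatment of connectedness and the mutation description in (2) is correct and matches the paper's intent, made precise by your observation that the unfaithful torsion classes form a finite order ideal in $\tors(A)$.
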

\begin{proof}
The first assertion of part (1) follows from Lemma \ref{Lem: faithful torsion class}. Moreover, being an interval in the complete lattice $\tors A$ makes $[\mathcal{T}_I, \modu A]$ a complete sublattice of $\tors A$. Now, if $\mathcal{T}$ is not functorially finite, then by \cite[Theorem 1.3]{DIJ}, there exists an infinite chain of torsion classes $0 \subsetneq  \mathcal{T}_1 \subsetneq \mathcal{T}_2 \subsetneq \cdots $ in $[0, \mathcal{T}]$, where each $\mathcal{T}_i$ is functorially finite. In particular, $\mathcal{T}_i=\Fac(M_i)$, for a support $\tau$-tilting module $M_i$ in $\modu A$. Meanwhile, since $A$ is minimal brick-infinite, almost all support $\tau$-tilting modules are tilting, hence faithful (see \cite[Theorem 4.2 and Corollary 4.4]{MP4}). Again, by Lemma \ref{Lem: faithful torsion class}, we have $\mathcal{T}_I \subseteq \mathcal{T}_i$, for almost all $i \in \mathbb{Z}_{>0}$, which shows that $\mathcal{T}_I \subseteq \mathcal{T}$.

To prove $(2)$, first observe that an argument similar to the proof of $(1)$ shows that any torsion class in $\tors(A) \setminus [\mathcal{T}_I, \modu A]$ is obtained from $0$ by applying finitely many right mutations, each of which being unfaithful. It follows that each such torsion class is generated by an unfaithful support $\tau$-tilting module. Since $A$ is minimal brick-infinite, we only have finitely many such modules (see \cite[Theorem 4.2]{MP4}).
\end{proof}

In classical representation theory, where the focus is on the study of indecomposable modules, an algebra $A$ is sometimes said to be \emph{mild} if every proper quotient of $A$ is representation-finite; that is, $A$ is either representation-finite or else $A$ is minimal representation-infinite. One can analogously define the notion of \emph{brick-mild} algebras as those for which every proper quotient is brick-finite. In light of the preceding proposition, we can give a characterization of the brick-mild algebras in terms of the lattice of torsion classes. 
More precisely, the following corollary gives a conceptual characterization of brick-mild algebras.

\begin{corollary}
An algebra $A$ is brick-mild if and only if $\tors(A) \setminus [\mathcal{T}_I, \modu A]$ is finite.
\end{corollary}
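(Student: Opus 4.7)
The plan is to deduce the corollary from Proposition \ref{Prop: lattice of torsion classes of min-brick-inf algs} via the equivalence that $A$ is brick-mild if and only if $A$ is either brick-finite or itself minimal brick-infinite. For the forward direction, if every proper quotient of $A$ is brick-finite then $A$ is itself brick-finite (in which case $\tors(A)$ is finite by Proposition \ref{Prop: tau-tilting-finiteness}) or $A$ is brick-infinite with every proper quotient brick-finite, i.e., minimal brick-infinite; in the latter case Proposition \ref{Prop: lattice of torsion classes of min-brick-inf algs}(2) is exactly the statement that $\tors(A)\setminus[\mathcal{T}_I,\modu A]$ is finite.

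I would prove the converse by contrapositive. Assuming $A$ is not brick-mild, choose a nonzero ideal $J$ maximal (by finite-dimensionality) among those with $A/J$ brick-infinite, so that $A/J$ is minimal brick-infinite. The key construction is an order-preserving injection $\Phi:\tors(A/J)\hookrightarrow\tors(A)$ defined by $\Phi(\mathcal{T}'):=\Filt_A(\mathcal{T}')$, where the filtration is computed in $\modu A$. That $\Phi(\mathcal{T}')$ is a torsion class reduces to closure under quotients, which follows because filtrations descend to quotients while $\mathcal{T}'$ is closed under quotients in $\modu(A/J)$. Injectivity follows from the identity $\Phi(\mathcal{T}')\cap\modu(A/J)=\mathcal{T}'$: any $A$-submodule of an $A/J$-module is again an $A/J$-module, so every step of a filtration of an $A/J$-module remains in $\modu(A/J)$, and extension-closure of $\mathcal{T}'$ inside $\modu(A/J)$ then absorbs it.

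A straightforward induction on filtration length shows that every $M\in\Filt_A(\mathcal{T}')$ is annihilated by a positive power of $J$. If $J$ is not nilpotent, then no such $M$ can be faithful (else $\ann_A(M)=0$ would contain a positive power of $J$, contradicting non-nilpotency); hence by Lemma \ref{Lem: faithful torsion class} every $\Phi(\mathcal{T}')$ lies outside $[\mathcal{T}_I,\modu A]$, and the infinitude of $\tors(A/J)$ (via Proposition \ref{Prop: tau-tilting-finiteness} applied to $A/J$) finishes this case.

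The main obstacle I anticipate is the case when $J$ is nilpotent, where the power-of-$J$ bound becomes vacuous. Here I plan to exploit that $A/J$ is non-local (a local algebra is brick-finite) to produce a non-trivial idempotent $\bar e$ of $A/J$, lift it through the nilpotent ideal $J$ to an idempotent $e\in A$, and use the non-nilpotent idempotent ideal $AeA\subseteq A$. The aim is to show that sufficiently many bricks $B$ of $A/J$ have $A$-annihilator containing a non-nilpotent subideal, so that their torsion classes $T(B)\in\tors(A)$ are non-faithful by the same power-of-annihilator argument and are all distinct via Proposition \ref{brick-tau-rigid corr.}. This combinatorial step is delicate, because the naive count of non-sincere bricks of $A/J$ is finite by the minimality of $A/J$, so one must extract the required infinitude from the interval $[\mathcal{T}_{I(A/J)},\modu(A/J)]$ of faithful torsion classes of $A/J$ (infinite by Proposition \ref{Prop: lattice of torsion classes of min-brick-inf algs}(2)) in a more refined way, perhaps invoking the finiteness of unfaithful support $\tau$-tilting modules from \cite[Theorem 4.2]{MP4} that underlies Proposition \ref{Prop: lattice of torsion classes of min-brick-inf algs}(2).
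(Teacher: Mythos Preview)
Your forward direction matches the paper. For the converse the paper is much more direct: it simply applies the injection $\psi\colon\brick(A)\to\tors(A)$ of Proposition~\ref{brick-tau-rigid corr.} to the infinitely many bricks $X\in\brick(A/J)\subseteq\brick(A)$ and asserts, via Lemma~\ref{Lem: faithful torsion class}, that each $T(X)$ lies outside $[\mathcal{T}_I,\modu A]$, with no case distinction on $J$. Since $\Phi(T_{A/J}(X))=\Filt_A\bigl(\Filt_{A/J}(\Fac X)\bigr)=\Filt_A(\Fac X)=T_A(X)$, your route and the paper's meet at exactly the same key step.

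The obstacle you flag in the nilpotent case is genuine, and your idempotent-lifting workaround cannot close it. Take $B$ the Kronecker algebra, $A=B\otimes_k k[\epsilon]/(\epsilon^2)$, $J=(\epsilon)$: then $A/J\cong B$ is minimal brick-infinite, but every non-nilpotent two-sided ideal of $A$ contains a vertex idempotent, so each $A/Ae_iA$ is local and a lifted idempotent only yields brick-finite quotients. In fact the paper's one-line appeal to Lemma~\ref{Lem: faithful torsion class} is not justified for \emph{every} such brick either: with $X=P_2^B$ one has $DB\in\Fac_B(P_2^B)$, whence $DA\in\Filt_A(\Fac_B(P_2^B))=T_A(X)$ is faithful despite $JX=0$. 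The corollary still holds in this example because the regular bricks $R_\lambda$ of $B$ do give non-faithful $T_A(R_\lambda)$ (a dimension-vector inequality on filtrations excludes $DA$), but producing an infinite subfamily of bricks with $T_A(X)$ non-faithful in general is the missing ingredient in both arguments. Your explicit isolation of the nilpotent case as the crux is therefore on target, even if the proposed fix is not.
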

\begin{proof}
Let $A$ be brick-mild. If $A$ is brick-finite, the assertion is trivial, and if $A$ is minimal brick-infinite, the implication follows from  part $(2)$ of Proposition \ref{Prop: lattice of torsion classes of min-brick-inf algs}. 
Conversely, assume that $\tors(A) \setminus [\mathcal{T}_I, \modu A]$ is finite. If $A$ is brick-infinite and $A/J$ is brick-infinite, for a nonzero two-sided ideal $J$, then $JX=0$, for infinitely many $X\in \brick(A)$. Thus, by Lemma \ref{Lem: faithful torsion class}, for any such brick $X$, we have $T(X) \notin [\mathcal{T}_I, \modu A]$. Observe that, by Proposition \ref{Prop: MS injective}, for distinct $X, Y \in \brick(A)$, we have $T(X)\neq T(Y)$. 
This contradicts the assumption that $\tors(A) \setminus [\mathcal{T}_I, \modu A]$ is finite, and we are done.
\end{proof}

Following \cite{MP2}, an irreducible component $\mathcal{Z}$ in $\Irr(A)$ is called \emph{faithful} if there is no non-zero two sided ideal $J$ that annihilates all modules in $\mathcal{Z}$. Observe that a faithful component $\mathcal{Z}$ does not necessarily contain a faithful module. In the latter case, we say that $\Z$ is a \emph{strongly faithful} component. Evidently, a strongly faithful component is faithful. We say that a brick $X$ is \emph{weakly faithful} it it lies in an irreducible component $\mathcal{Z} \in \Irr A$ which is faithful (but not necessarily strongly faithful).

\begin{prop} \label{lem: faithful bricks}
    Let $A$ be any algebra. Then for all but possibly one weakly faithful brick $X$, we have $\Hom_A(DA,X)=0$ and consequently, ${\rm pd}_A (\tau^{-1}X) \le 1$.
\end{prop}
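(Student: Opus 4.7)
The ``consequently'' part is immediate from the standard fact recalled in Section~2 that a module $M$ has projective dimension at most one if and only if $\Hom_A(DA,\tau M)=0$. Applying this with $M=\tau^{-1}X$ (using $\tau\tau^{-1}X\cong X$ when $X$ is non-injective, and noting that $\tau^{-1}X=0$ trivially satisfies $\pd\le 1$ when $X$ is injective), we obtain $\pd_A(\tau^{-1}X)\le 1$ whenever $\Hom_A(DA,X)=0$.

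For the main assertion I would argue by contradiction. Suppose there exist two non-isomorphic weakly faithful bricks $X_1$ and $X_2$ with $\Hom_A(DA,X_j)\ne 0$ for $j=1,2$. Each $X_j$ lies in a faithful irreducible component $\mathcal{Z}_j\in\Irr(A,\mathbf{d}_j)$. Since faithfulness is an open condition on the representation variety that is non-empty on $\mathcal{Z}_j$, the generic element $Y_j\in\mathcal{Z}_j$ is faithful and hence admits an epimorphism $Y_j^{r_j}\twoheadrightarrow DA$ for some $r_j\ge 1$, by the standard characterization of faithful modules recalled in the paper. Applying $\Hom_A(-,X_i)$ to this epimorphism yields an embedding
\[
\Hom_A(DA,X_i)\hookrightarrow \Hom_A(Y_j,X_i)^{r_j},
\]
so $\Hom_A(Y_j,X_i)\ne 0$ for the generic $Y_j\in\mathcal{Z}_j$ and for each $i\in\{1,2\}$. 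By upper semi-continuity of $\dim\Hom_A(-,X_i)$ on $\mathcal{Z}_j$, this extends to $\Hom_A(Y,X_i)\ne 0$ for every $Y\in\mathcal{Z}_j$. Specializing to $Y=X_1\in\mathcal{Z}_1$ and $Y=X_2\in\mathcal{Z}_2$ produces non-zero morphisms $f\colon X_1\to X_2$ and $g\colon X_2\to X_1$.

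Since $X_1,X_2$ are non-isomorphic bricks, any composition $g\circ f$ must vanish: a non-zero composition would be a nonzero scalar multiple of $\mathrm{id}_{X_1}$ (because $\End X_1=k$), which would split $X_1$ off $X_2$ as a direct summand and force $X_1\cong X_2$, contradicting our assumption. Hence $gf=0$, and symmetrically $fg=0$. This forces proper non-zero submodules $\Image f\subsetneq X_2$ and $\Image g\subsetneq X_1$ that are simultaneously quotients of one brick and submodules of the other.

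The main obstacle is to convert this rigid structure into an outright contradiction. The plan is to invoke the torsion-theoretic framework of Section~7, in particular Lemma~\ref{Lem: faithful torsion class} and the brick-to-torsion-class correspondence of Proposition~\ref{brick-tau-rigid corr.}: the morphisms $Y\to X_j$ constructed above all factor through $DA$, so their images lie in $\Fac(DA)\subseteq\mathcal{T}_I$, where $\mathcal{T}_I=\Filt(\Fac(DA))$ is the smallest torsion class containing $DA$. I expect that combining this torsion position with the faithful constraint on the components $\mathcal{Z}_j$ and the brick property of $X_j$ forces both $X_1$ and $X_2$ to be isomorphic to the unique simple object of a canonically attached wide subcategory, yielding $X_1\cong X_2$. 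Pinning down this final uniqueness statement precisely is where I expect the bulk of the technical difficulty to lie.
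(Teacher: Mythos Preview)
Your proposal has two genuine gaps, and the paper's route is rather different.

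\textbf{Gap 1: faithful components need not contain faithful modules.} You write that ``faithfulness is an open condition on the representation variety that is non-empty on $\mathcal{Z}_j$'', and from this deduce that the generic $Y_j\in\mathcal{Z}_j$ is faithful. Openness is fine, but non-emptiness is not: the paper explicitly warns, right after defining faithful components, that a faithful component $\mathcal{Z}$ does not necessarily contain a faithful module. The definition only says that no single nonzero ideal kills \emph{every} module in $\mathcal{Z}$; the annihilators of individual modules can be nonzero and vary over the component. (A concrete toy example: for $A=k[x,y]/(x,y)^2$ and $\mathbf{d}=2$, the $2$-dimensional modules where $x,y$ act by rank-one nilpotents form a faithful component, yet each module has a one-dimensional annihilator.) Without a faithful $Y_j$ you cannot produce the epimorphism $Y_j^{r_j}\twoheadrightarrow DA$, and the rest of your chain collapses.

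\textbf{Gap 2: the endgame is missing and the intermediate state is not contradictory.} Even granting nonzero maps $f:X_1\to X_2$ and $g:X_2\to X_1$ with $gf=fg=0$, this is \emph{not} a contradiction: two non-isomorphic bricks can perfectly well admit nonzero maps in both directions with vanishing compositions (e.g.\ the two indecomposable projectives over $kQ/\mathrm{rad}^2$ for the $2$-cycle $Q$). Your closing paragraph acknowledges that the actual contradiction is still to be found; as it stands the argument is incomplete.

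\textbf{How the paper proceeds instead.} The paper avoids varieties entirely for the main step. For a \emph{faithful} brick $X$, it uses that $T(X)$ is completely join-irreducible in $\tors A$: there is a unique $T(X)_*\lessdot T(X)$, and every proper sub-torsion-class of $T(X)$ lies in $T(X)_*$. Since $\mathcal{T}_I\subseteq T(X)$ by faithfulness, either $T(X)=\mathcal{T}_I$ (at most one such $X$, by the Marks--\v{S}\v{t}ov\'{\i}\v{c}ek injection) or $\mathcal{T}_I\subseteq T(X)_*$, whence $X\in T(X)_*^\perp\subseteq\mathcal{T}_I^\perp=\mathcal{F}_I$. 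For a weakly faithful but non-faithful brick $X$, the paper invokes Asai's semibrick completion to embed $X$ into an infinite semibrick $\mathcal{X}$ inside a faithful brick component, with $\mathcal{X}\setminus\{X\}$ faithful; then $T(\mathcal{X}\setminus\{X\})\subsetneq T(\mathcal{X})$ again forces $X\in\mathcal{F}_I$. The join-irreducibility of $T(X)$ is the key lemma your approach is missing.
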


\begin{proof}
    Assume that $X$ is a faithful brick and consider the torsion class $T(X)$. Since $X$ is faithful, by Lemma \ref{Lem: faithful torsion class} we have $\mathcal{T}_I \subseteq T(X)$. In particular, unless $T(X) = \mathcal{T}_I$ (which could happen for at most one brick by Proposition \ref{Prop: MS injective}), we get the inclusion $\mathcal{T}_I \subseteq T(X)_* = T(X) \cap ^\perp X$. Since $X$ is torsion-free with respect to the torsion class $T(X)_*$, we see that $X \in \mathcal{F}_I$. Therefore, $\Hom_A(DA, X)=0$.  Now, we recall that a module $Y$ has ${\rm pd}_A Y \leq 1$ if and only if $\Hom_A(DA, \tau Y)=0$; see \cite[Chap. IV, Lemma 2.7]{ASS}.

    Let now $X$ be a weakly faithful brick which is not faithful. Then there is a faithful brick component $\mathcal{Z}$ with $c(\mathcal{Z}) \ge 1$ containing $X$. It follows from \cite{As1} that $X$ can be completed to an infinite semibrick $\mathcal{X}$ with $\mathcal{X} \subseteq \mathcal{Z}$. We can further assume that $\mathcal{X} \setminus \{X\}$ is faithful.  Once again, by Proposition \ref{Prop: MS injective}, we have a strict inclusion of torsion classes
    $$T(\mathcal{X} \setminus \{X\}) \subsetneq T(\mathcal{X}),$$  where $X$ is torsion-free with respect to the smallest torsion class, which contains $\mathcal{T}_I$. This makes $X \in \mathcal{F}_I$ and the desired result follows.
\end{proof}

\begin{remark}
     When ${\rm pd}_A(\tau^{-1}X)=1$, we get that $\tau^{-1}X$ lies in some $\tau$-regular component. However, even if $X$ is in general position in an irreducible component containing it, it does not necessarily follow that $\tau^{-1}X$ is in general position. In particular, if the orbit of $X$ is dense, then $\tau^{-1}X$ need not be $\tau$-regular (i.e., $\tau^{-1}X$ is not necessarily in general position in a $\tau$-regular component).
\end{remark}

Before proving the next theorem, for simplicity we define 
$$\tau\brick(A):=\{\tau X \,|\, X \in \brick(A) \}.$$
Observe that there is a one-to-one correspondence between elements of $\tau\brick(A)$ and the non-projective bricks $X \in \brick(A)$. In particular, $A$ is brick-finite if and only if $\tau\brick(A)$ is a finite set. Moreover, from the equivalence $\tau: \underline{\modu} A \rightarrow \overline{\modu} A$ induced by the Auslander-Reiten translation, it follows that for each $X \in \tau\brick(A)$, we have $\overline{\End}_A(X)=k$.

We recall that for any algebra $A$, as before we set $\mathcal{F}_I= \mathcal{T}_I^{\perp} = (DA)^\perp$. 

\begin{theorem} \label{Thm: 2nbBT equivalence for tame}
Let $A$ be a minimal brick-infinite tame algebra. Then, the following are equivalent:
\begin{enumerate}
    \item $\tau\brick(A)\cap \mathcal{F}_I$ is an infinite set.
    \item For some $d\in \mathbb{Z}_{>0}$, there is an infinite family of bricks of dimension $d$ in $\brick(A)$.
\end{enumerate}
\end{theorem}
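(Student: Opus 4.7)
The plan is to prove each implication separately. Direction $(2) \Rightarrow (1)$ is a fairly direct combination of tameness, the minimality hypothesis on $A$, and Proposition \ref{lem: faithful bricks}. Direction $(1) \Rightarrow (2)$ is the substantial one, requiring a case analysis on the irreducible components containing the bricks of small projective dimension, together with the lattice structure from Proposition \ref{Prop: lattice of torsion classes of min-brick-inf algs}.

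For $(2) \Rightarrow (1)$, starting from an infinite family $\{B_\lambda\}$ of bricks of fixed dimension $d$, I would mimic the argument in the proof of Theorem \ref{Thm: CKW on tame}: tameness places these bricks in an irreducible component $\mathcal{Z}$ with $c(\mathcal{Z})=1$, and on a nonempty open subset of $\mathcal{Z}$ one has $\tau B \cong B$. Since $A$ is minimal brick-infinite, any non-faithful brick of dimension $d$ is a brick over some proper quotient $A/J$ that is brick-finite, and since $A$ has only finitely many minimal nonzero two-sided ideals to consider, only finitely many $B_\lambda$ are non-faithful. Applying Proposition \ref{lem: faithful bricks} to the remaining faithful bricks puts all but one of them into $\mathcal{F}_I$, so infinitely many $B$ satisfy $\tau B \cong B \in \mathcal{F}_I$, contributing infinitely many distinct members to $\tau\brick(A) \cap \mathcal{F}_I$.

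For $(1) \Rightarrow (2)$, the first step is to rewrite the hypothesis via the standard identification $\Hom_A(DA, \tau X) = 0$ if and only if $\pd_A X \leq 1$, so that (1) supplies infinitely many non-projective bricks $X$ with $\pd_A X \leq 1$. Each such $X$ lies in an irreducible component $\mathcal{Z}_X$ that is necessarily $\tau$-regular (as it contains a module of projective dimension at most one; cf.\ the example opening Section \ref{Sect: Bricks and tau-regular components}). Proposition \ref{Prop: orbit closure of bricks} then provides a dichotomy: either some $\mathcal{Z}_X$ is the closure of infinitely many orbits of bricks, in which case (2) holds immediately with the dimension vector of $\mathcal{Z}_X$; or every $\mathcal{Z}_X$ equals $\overline{\mathcal{O}_X}$, forcing $c(\mathcal{Z}_X) = 0$ and making $X$ a $\tau$-rigid brick of projective dimension at most one, i.e., an indecomposable partial tilting brick.

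The main obstacle is ruling out the second alternative of this dichotomy, the scenario producing infinitely many $\tau$-rigid partial tilting bricks $X_i$. Here I plan to exploit the special lattice structure of $\tors(A)$ afforded by minimality: almost all $X_i$ are faithful, so the functorially finite torsion classes $\Fac(X_i)$ lie in $[\mathcal{T}_I, \modu A]$ by Lemma \ref{Lem: faithful torsion class}, and Proposition \ref{Prop: lattice of torsion classes of min-brick-inf algs} controls how they sit in the $\tau$-tilting fan. Combined with the $g$-tameness of $A$ (a consequence of tameness), the corresponding $g$-vectors must accumulate in $\mathbb{R}^n$, and any accumulation ray should correspond, via Theorem \ref{Plamondon's result}, to a $\tau$-regular component $\mathcal{Z}^*$ with $c(\mathcal{Z}^*) \geq 1$ whose generic module is a brick (as a limit of the bricks $X_i$ under semi-continuity of the endomorphism-ring dimension); tameness then upgrades $\mathcal{Z}^*$ to a $1$-parameter family of bricks, yielding (2). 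Rigorously identifying this limit as a brick component is the delicate step, and could alternatively be handled using the $\tau$-convergence construction of \cite{MP2} specialized to the minimal brick-infinite setting.
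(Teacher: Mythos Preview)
Your overall architecture matches the paper's: both directions are handled separately, and for $(1)\Rightarrow(2)$ you set up the same dichotomy (either some brick of projective dimension at most one has non-open orbit, or all such bricks are $\tau$-rigid). Your argument for $(2)\Rightarrow(1)$ is correct and close in spirit to the paper's, though the paper invokes \cite[Corollary~5.6]{MP2} to obtain $\pd X_i\le 1$ directly rather than routing through Proposition~\ref{lem: faithful bricks}; both routes work.

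The genuine gap is in your primary treatment of the hard case of $(1)\Rightarrow(2)$, where you are left with infinitely many $\tau$-rigid partial tilting bricks $X_i$. The accumulation-plus-semicontinuity argument does not go through as written. First, an accumulation ray of the $g$-vectors of the $X_i$ need not be rational, so Theorem~\ref{Plamondon's result} does not apply to it; and even if integral, nothing you have said forces the associated $\tau$-regular component to satisfy $c\ge 1$. Second, and more seriously, the claim that the generic module at the limiting $g$-vector is a brick ``as a limit of the bricks $X_i$ under semi-continuity of the endomorphism-ring dimension'' is not meaningful: the $X_i$ typically have pairwise distinct dimension vectors, hence live in different varieties $\rep(A,\mathbf{d}_i)$, and upper semi-continuity of $\dim\End$ is a statement \emph{within} a fixed variety, not across a sequence of varieties indexed by moving $g$-vectors.

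The paper resolves this case precisely via the route you mention only as an afterthought. From infinitely many indecomposable $\tau$-rigid modules over a minimal brick-infinite algebra, it invokes \cite[Proposition~1.5]{MP2} to produce an \emph{integral} $g$-vector lying outside the $\tau$-tilting fan (this is the non-obvious step your accumulation sketch fails to supply), and then \cite[Proposition~8.1]{MP2} to extract an infinite family of bricks of the same dimension. In other words, your ``alternative'' is the actual argument; you should commit to it and discard the accumulation heuristic.
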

\begin{proof}
To prove $(1)\rightarrow (2)$, let $\tau\brick(A)\cap \mathcal{F}_I$ be an infinite set. Then, by \cite[IV. Lemma 2.7]{ASS}, for each $\tau B$ in $\tau\brick(A)\cap \mathcal{F}_I$, we have $\pd B\leq 1$. Consequently, by Proposition \ref{Prop: vanishing Ext and orbit openennes}, we have that $\Ext^1_A(B,B)=0$ if and only if $\mathcal{O}_B$ is open in any irreducible component of $\Irr(A)$ that contains $B$. 
Therefore, we may assume that all such bricks $B$ are rigid (as otherwise, for one of them $\mathcal{O}_B$ will not be open, and hence we immediately get the desired result). Using the Auslander-Reiten formula, this implies that all such bricks $B$ are in fact $\tau$-rigid. Hence, we have an infinite family of $\tau$-rigid bricks over a minimal brick-infinite algebra. By \cite[Proposition 1.5]{MP2}, we get an (integral) $g$-vector $\theta \in K_0(\proj A)$ that lies outside of the $\tau$-tilting fan $\mathcal{F}$. This implies that $A$ satisfies the assumptions of \cite[Proposition 8.1]{MP2}, and hence it admits an infinite family of bricks of the same dimension.

To show $(2)\rightarrow (1)$, let $\{X_i\}_{i\in I}$ be an infinite subset of $\brick(A)$ with $\dim_k(X_i)=d$.  This yields a brick component $\mathcal{Z}$ with $c(\mathcal{Z})=1$ in $\Irr (A)$ which has to be faithful since $A$ is minimal brick-infinite. It follows from \cite[Corollary 5.6]{MP2} that almost all $X_i$ have projective dimension one. By Crawley-Boevey's result on tame algebras \cite{C-B}, we also get that almost all $X_i$ are $\tau$-homogeneous, that is $\tau X_i=X_i$. Consequently, because $\pd X_i=1$ for infinitely many $X_i$, we have $\Hom(DA,\tau X_i)=0$. This implies $X_i=\tau X_i \in \tau \brick(A) \cap \mathcal{F}_I$, for infinitely many $X_i$.
\end{proof}

\begin{remark}
From \cite[Corollary D]{MP3} and \cite[Theorem 5.4]{MP4} it follows that, to settle Conjectures \ref{2nd bBT Conj.} and \ref{CKW Conj.} for all tame algebras, we can additionally restrict ourselves to those minimal brick-infinite algebras over which almost all bricks are faithful. Equivalently, in light of the above theorem, we can assume $A$ is a minimal brick-infinite tame algebra such that all but finitely many modules in $\brick(A)$ are faithful. And to settle the aforementioned conjectures for all tame algebras, it suffices to show that for every such algebra $A$, the set $\tau\brick(A)\cap \mathcal{F}_I$ is infinite.
\end{remark}

In the following corollary, we observe that for tame algebras, bricks of projective dimension $1$ play a decisive role in the study of the 2nd bBT conjectures (see Conjecture \ref{2nd bBT Conj.}).
For simplicitly, we define
$$\text{brick}^{\leq 1}(A):=\{X\in \brick(A) \, | \, \pd X\leq 1 \}.$$

\begin{corollary}\label{Cor: proj. dim of tame min-brick-inf algs}
For any minimal brick-infinite tame algebra $A$, the following are equivalent:
\begin{enumerate}
    \item $A$ satisfies the $2nd$ bBT conjecture.
    \item $A$ admits a $1$-parameter family of bricks of projective dimension $1$.
    \item $\text{brick}^{\leq 1}(A)$ is an infinite set.
 \end{enumerate}
 Therefore, a brick-infinite tame algebra satisfies the $2nd$ bBT conjecture if and only if a quotient of it admits a 1-parameter family of bricks of projective dimension one.
\end{corollary}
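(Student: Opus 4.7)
The plan is to close a cycle $(2) \Rightarrow (3) \Rightarrow (1) \Rightarrow (2)$, leaning on Theorem \ref{Thm: 2nbBT equivalence for tame} and on Propositions \ref{Prop: orbit closure of bricks} and \ref{Prop: vanishing Ext and orbit openennes} for the two geometric dichotomies. The implication $(2) \Rightarrow (3)$ is immediate, since a $1$-parameter family of bricks of projective dimension one supplies infinitely many pairwise non-isomorphic elements of $\text{brick}^{\le 1}(A)$.

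For $(3) \Rightarrow (1)$, I would split into two cases depending on whether some $X \in \text{brick}^{\le 1}(A)$ admits a non-trivial self-extension. If some such $X$ has $\Ext^1_A(X,X) \neq 0$, then since $\pd_A X \le 1$ forces $\Ext^2_A(X,X) = 0$, the contrapositive of part $(2)$ of Proposition \ref{Prop: vanishing Ext and orbit openennes} yields that $\mathcal{O}_X$ is not open in any irreducible component $\mathcal{Z}$ containing $X$. Proposition \ref{Prop: orbit closure of bricks} then forces $\mathcal{Z}$ to be of the second type, i.e., the closure of an infinite union of orbits of bricks, which witnesses the $2$nd bBT conjecture for $A$ at the dimension vector $\mathbf{d}_X$. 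In the opposite case, every $X \in \text{brick}^{\le 1}(A)$ is rigid; and since $\pd_A X \le 1$ collapses rigidity and $\tau$-rigidity (via the identity $\overline{\Hom}(X, \tau X) = \Hom(X, \tau X)$ recalled after the definition of $\tau$-tilting modules), $A$ admits infinitely many $\tau$-rigid bricks. Because $A$ is minimal brick-infinite, \cite[Proposition 1.5]{MP2} produces an integral $g$-vector outside $\mathcal{F}_A$, and \cite[Proposition 8.1]{MP2} then converts this into an infinite family of bricks of the same dimension, again yielding $2$nd bBT.

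For $(1) \Rightarrow (2)$, I would mirror the $(2) \Rightarrow (1)$ argument of Theorem \ref{Thm: 2nbBT equivalence for tame}: the $2$nd bBT assumption provides some $\mathbf{d}$ and infinitely many brick orbits in $\rep(A, \mathbf{d})$, forcing a brick component $\mathcal{Z}$ with $c(\mathcal{Z}) = 1$, which must be faithful since $A$ is minimal brick-infinite. By \cite[Corollary 5.6]{MP2} almost all bricks in $\mathcal{Z}$ have projective dimension one, and by Crawley-Boevey's theorem almost all are $\tau$-homogeneous; these together form the required $1$-parameter family of bricks of projective dimension one.

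The concluding clause about arbitrary brick-infinite tame algebras is then recovered by passing to a minimal brick-infinite quotient $A/J$ (which exists by noetherianity of the ideal lattice of $A$) and invoking the equivalence just proved for $A/J$, which remains tame. The direction ``quotient with such a family $\Rightarrow$ $2$nd bBT for $A$" is immediate, since any $1$-parameter family of bricks of projective dimension one in $\modu(A/J)$ gives infinitely many bricks of $A$ of the same dimension. For the converse, an infinite family of bricks of the same dimension witnessing $2$nd bBT for $A$ can, after restricting to a subfamily sharing a common annihilator ideal and then to a minimal brick-infinite further quotient, be viewed inside some $\modu(A/J)$ to which the main equivalence applies. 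The main obstacle is the rigid sub-case of $(3) \Rightarrow (1)$: transferring infinitely many $\tau$-rigid bricks first to a rational ray outside the $\tau$-tilting fan and then to an actual $1$-parameter family requires the structural machinery of \cite{MP2}; every other step reduces to the Voigt-type dichotomy of Proposition \ref{Prop: vanishing Ext and orbit openennes}, the trichotomy of Proposition \ref{Prop: orbit closure of bricks}, and tameness via Crawley-Boevey's theorem.
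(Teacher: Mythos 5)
Your cyclic argument $(2)\Rightarrow(3)\Rightarrow(1)\Rightarrow(2)$ is sound and, for the three-way equivalence, follows essentially the same lines as the paper: you replay the two directions of Theorem~\ref{Thm: 2nbBT equivalence for tame}, using the Voigt dichotomy of Proposition~\ref{Prop: vanishing Ext and orbit openennes} together with Proposition~\ref{Prop: orbit closure of bricks} for the case of a non-rigid brick of projective dimension at most one, the $\tau$-rigid-brick machinery of \cite{MP2} for the rigid case, and Crawley-Boevey's theorem together with \cite[Corollary~5.6]{MP2} for $(1)\Rightarrow(2)$. The paper's own proof is shorter because it observes that $\tau X \in \tau\brick(A)\cap\mathcal{F}_I$ iff $\pd_A X\le 1$, so that $(1)\Leftrightarrow(3)$ is literally Theorem~\ref{Thm: 2nbBT equivalence for tame}; it then obtains $(3)\Rightarrow(2)$ by combining the $1$-parameter family of $\tau$-homogeneous weakly faithful bricks extracted in the proof of that theorem with Proposition~\ref{lem: faithful bricks}.

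There is, however, a genuine gap in your treatment of the final assertion. For the forward direction, you propose to pass from a brick-infinite tame $A$ satisfying the 2nd bBT to a \emph{minimal} brick-infinite quotient $A/J$ and invoke the equivalence there. But after you restrict the witnessing family to a subfamily with a common annihilator ideal $J_0$, the quotient $A/J_0$ need not be minimal brick-infinite, and any further quotient $A/J$ with $J\supsetneq J_0$ chosen to achieve minimality can annihilate the very bricks in your family. You would then have a minimal brick-infinite tame $A/J$ with no guarantee that it satisfies the 2nd bBT, so the equivalence just proved cannot be applied to it, and the family cannot be ``viewed inside $\modu(A/J)$" as you assert. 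The paper avoids minimality entirely: it takes $J=\operatorname{Ann}(\mathcal{Z})$ for the witnessing brick component $\mathcal{Z}$ (with no claim that $A/J$ is minimal brick-infinite), observes that $\mathcal{Z}$ becomes a faithful brick component over $A/J$, and applies Proposition~\ref{lem: faithful bricks}, which holds over an arbitrary algebra, to conclude that the $\tau_{A/J}$-homogeneous bricks in $\mathcal{Z}$ have projective dimension one over $A/J$. So your argument would go through if you replaced the minimality step by a direct appeal to Proposition~\ref{lem: faithful bricks} over $A/\operatorname{Ann}(\mathcal{Z})$; as written, the final clause is not established.
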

\begin{proof}
Let $A$ be minimal brick-infinite tame. Recall that $\tau X \in \tau\brick(A)\cap \mathcal{F}_I$ if and only if $\pd_A X \le 1$. The proof of Theorem \ref{Thm: 2nbBT equivalence for tame} shows that when $\tau\brick(A)\cap \mathcal{F}_I$ is infinite, there is a 1-parameter family $\{X_i\}_{i \in I}$ of $\tau$-homogeneous bricks. By minimality, these bricks are weakly faithful and it follows from Proposition \ref{lem: faithful bricks} and $\tau$-homogeneity that these bricks all lie in $\tau\brick(A)\cap \mathcal{F}_I$.  This shows that $(1), (2)$ and $(3)$ are all equivalent.

For the last assertion, observe that a brick-infinite tame algebra, say $\Lambda$, satisfies the $2nd$ bBT conjecture if and only if $\Lambda$ has a brick component $\mathcal{Z}$ with $c(\mathcal{Z})=1$. Let $J = {\rm Ann}(\mathcal{Z})$ and note that the algebra $\Lambda/J$ still has $\mathcal{Z}$ as a brick component. However, the 1-parameter family of bricks in $\mathcal{Z}$ are now weakly faithful $\tau_{\Lambda/J}$-homogeneous, hence of projective dimension 1 over the quotient algebra $\Lambda/J$. 
\end{proof}

To put the above corollary into a larger perspective, we finish this section with some remarks on these new results and some important links to the earlier parts of this article and older work.

\begin{remark}
Let us highlight the connections between Corollary \ref{Cor: proj. dim of tame min-brick-inf algs} and the main results of the preceding section, particularly with Proposition \ref{Prop: E-tame but not E-finite} and Theorem \ref{Thm: equivalences for tame algebras}. These results, together, lead to some further interesting equivalences on (minimal brick-infinite) tame algebras, which we leave to the reader. 
Meanwhile, we emphasize that, from \cite[Theorem 5.4]{MP4} it follows that every minimal brick-infinite tame algebra with infinitely many unfaithful bricks admits a $1$-parameter family of bricks of projective dimension $1$, hence it satisfies all the above-mentioned equivalent conditions.

\medskip

On the other hand, it is known that tame concealed algebras are minimal brick-infinite, and they admit a $1$-parameter family of bricks of projective dimension $1$. By Corollary \ref{Cor: proj. dim of tame min-brick-inf algs}, and in light of the $2nd$ bBT conjecture, it is expected that all minimal brick-infinite tame algebras admit a similar property, that is, to have a $1$-parameter family of bricks of projective dimension $1$. Meanwhile, observe that, unlike concealed algebras, a minimal brick-infinite tame algebra can have infinite global dimension (for explicit examples, see the generalized barbell algebras studied in \cite{Mo1} and \cite{MP1}).
\end{remark}

\medskip

\newpage

\textbf{Acknowledgments.} The authors would like to thank the organizers of the Autumn School-Conference on New Developments in Representation Theory of Algebras, held between November 18-30 of 2024, at Okinawa Institute of Science and Technology (OIST), Japan. We also thank the participants for many interesting discussions and feedback during and after the event. 
KM was supported by Early-Career Scientist JSPS Kakenhi grant number 24K16908. CP was supported by the Natural Sciences and Engineering Research Council of Canada (RGPIN-2018-04513) and by the Canadian Defence Academy Research Programme. 

\medskip

On a more personal note, the first-named author (KM) wishes to express his deep gratitude to Prof. O. A. S. Karamzadeh, for some insightful guidance on mathematical thinking which has profoundly influenced his approach towards mathematics and beyond. In fact, analyzing his initial interest in the main conjecture treated in this manuscript (see Conjecture \ref{2nd bBT Conj.}), the genesis of KM’s work on this problem can be traced back to a formative lesson during his Master's studies under Prof. Karamzadeh: ``wonder if an interesting result can be formulated as an ‘if, and only if’ statement, and in a well-motivated way.". 
As a token of appreciation for this valuable insight -- along with many other lasting contributions -- and in recognition of the pedagogical and aesthetic value of such formulations in mathematics, as recently communicated to a wider audience in [Ka] with several illustrative examples, KM would also like to present possibly the simplest (yet equivalent) articulation of his original conjecture, as follows:

\textbf{2nd bBT Conjecture} (\cite[Conjecture 6.0.1]{Mo1}):
\emph{An algebra $A$ admits infinitely many (non-isomorphic) bricks if, and only if, $\modu A$ contains infinitely many (non-isomorphic) bricks of the same dimension.} 

Let us observe that understanding the above formulation, expressed in this elementary language, requires only basic knowledge of module theory and linear algebra -- just enough to understand the definitions of a \emph{brick} and the dimension of a vector space. This choice also reflects our strong preference for the name ``Second brick-Brauer-Thrall Conjecture", as opposed to alternative names used in the literature. This hopefully aligns with the spirit of Prof. Karamzadeh’s perspective, who once remarked: ``Mathematics is well-defined doodling, and this doodling is never complete unless it can be shared by laymen.”.

\medskip

\end{document}